\documentclass[reqno,11pt]{amsart}
\usepackage{amsmath,amsfonts,amsthm,amssymb,mathrsfs}
\usepackage{setspace,comment,verbatim}
\usepackage{fancyhdr}
\usepackage{lastpage}
\usepackage{extramarks}
\usepackage{chngpage}
\usepackage{soul,color}
\usepackage{graphicx,float,wrapfig}
\DeclareMathSymbol{\R}{\mathbin}{AMSb}{"52}

\newcommand{\E}{\mathbb{E}}
\newcommand{\F}{\mathscr{F}}
\renewcommand{\H}{\mathscr{H}}

\renewcommand{\R}{\mathbb{R}}
\newcommand{\inv}{\text{inv}}
\newcommand{\Inv}{\text{Inv}}
\newcommand{\Div}{\text{Cinv}}
\newcommand{\cross}{\text{cross}}
\newcommand{\nest}{\text{nest}}
\newcommand{\Cross}{\text{Cross}}
\newcommand{\Nest}{\text{Nest}}
\renewcommand{\div}{\text{cinv}}
\DeclareMathSymbol{\N}{\mathbin}{AMSb}{"4E}
\DeclareMathSymbol{\Z}{\mathbin}{AMSb}{"5A}

\renewcommand{\sp}{\vspace{10pt}}

\newtheorem*{thm*}{Theorem}
\newtheorem{lem}{Lemma}
\newtheorem{remark}{Remark}
\newtheorem{defn}{Definition}
\newtheorem{defthm}{Definition/Theorem}
\newtheorem{cor}{Corollary}
\newtheorem{prop}{Proposition}
\topmargin=-0.4in      %
\evensidemargin=0in     %
\oddsidemargin=0in      %
\textwidth=6.5in        %
\textheight=9.0in       %
\headsep=0.25in         %
\pagestyle{plain}                                                       %

\begin{document}
\title{The $(q,t)$-Gaussian Process}
\author{Natasha Blitvi\'c\\}

\begin{abstract}
The\thanks{\footnotesize The question underlying this work arose while the author was visiting her advisor, T. Kemp, at the University of California in San Diego. The author's research was supported by the \emph{Claude E. Shannon Research Assistantship} at MIT and the \emph{Chateaubriand Fellowship} at the Institut Gaspard Monge of the Univerist\'e Paris-Est, co-advised by Ph. Biane.\\
{\it Address 1:} Massachusetts Institute of Technology, Room 2-341, 77 Massachusetts Avenue, Cambridge, MA 02139.\\
{\it Address 2:} Universit\'e Paris-Est, LIGM, Copernic 4B03R, 77454 Marne-la-Vall\'ee cedex 2, France.\\
{\it Email:} blitvic@mit.edu} $(q,t)$-Fock space $\F_{q,t}(\H)$, introduced in this paper, is a deformation of the $q$-Fock space of Bo\.zejko and Speicher. The corresponding creation and annihilation operators now satisfy the commutation relation
$$a_{q,t}(f)a_{q,t}(g)^\ast-q \,a_{q,t}(g)^\ast a_{q,t}(f)= \langle f,g\rangle_{_\H}\, t^{N},$$
with $N$ denoting the usual number operator, and generate a Hilbert space representation of the Chakrabarti-Jagannathan deformed quantum oscillator algebra. The moments of the deformed field operator $s_{q,t}(h):=a_{q,t}(h)+a_{q,t}(h)^\ast$, the present analogue of the Gaussian random variable, are encoded by the joint statistics of crossings and nestings in pair partitions. The restriction of the vacuum expectation state to the $(q,t)$-Gaussian algebra is not tracial for $t\neq 1$.  

The $q=0<t$ specialization yields a new single-parameter deformation of the full Boltzmann Fock space of free probability. This refinement is particularly natural as the probability measure associated with the deformed semicircular element turns out to be encoded via the Rogers-Ramanujan continued fraction, the $t$-Airy function, the $t$-Catalan numbers of Carlitz-Riordan, and the first-order statistics of the reduced Wigner process.\\

\noindent {\small {\it Keywords:} Free probability; $q$-Gaussians; Fock spaces; Deformed oscillator algebras.}

\end{abstract}

\maketitle

\section{Introduction}
\emph{Non-commutative probability} refers, in broad terms, to a generalization of the classical probability theory to encompass quantum observables. In particular, Kolmogorov's probability triple is replaced by a \emph{non-commutative probability space} $(\mathscr A,\varphi)$, where $\mathscr A$ is a unital $\ast$-algebra whose elements are interpreted as \emph{non-commutative random variables} and the unital linear functional $\varphi$ plays the role of the expectation. Particularly relevant examples are to be found among algebras of linear operators on the Bosonic and Fermionic Fock spaces together with the corresponding vacuum expectation states. The Bosonic and Fermionic annihilation operators, $\{a_i^+\}$ and $\{a_i^-\}$ respectively, together with their adjoints (the creation operators) satisfy the commutation relations
\begin{equation}(a_i^+)(a_j^+)^\ast-(a_j^+)^\ast(a_i^+)=\delta_{i,j}\tag{CCR}\end{equation}
\begin{equation}(a_i^-)(a_j^-)^\ast+(a_j^-)^\ast(a_i^-)=\delta_{i,j}\tag{CAR},\end{equation}
where (CCR) is referred to as the \emph{canonical commutation relations}\footnote{It should be noted that the 
operators participating in CCR are in fact \emph{not bounded} and do not live in a $\ast$-algebra. Rather, they are affiliated with one and the corresponding $\ast$-algebra can be identified with $L^\infty$ of the Gaussian measure. All other examples throughout this paper will take place in $\ast$-algebras of bounded operators.} and (CAR) as the \emph{canonical anti-commutation relations}.

In the setting of (CCR), of particular note is the fact that the Bosonic field operator $s_1=a_1+a_1^\ast$ can be naturally identified with the classical Gaussian random variable.\footnote{In the sense that for any continuous function $f$ supported on the spectrum of $s_1$, $\int fd\mu=\varphi(f(s_1))$ with  $d\mu(x)=(2\pi)^{-1/2}e^{-x^2/2}dx$. Equivalently, 
$\varphi(s_1^n)$ equals the $n^\text{th}$ moment of the standardized Gaussian random variable.}
It is in this context that Frisch and Bourret \cite{Frisch1970} first considered a deformation of the canonical (anti-)commutation relations, as
$a_ia_j^\ast-qa_j^\ast a_i=\gamma_{i,j}\delta_{i,j},$
for $q\in [-1,1]$ and $\gamma_{i,j}$ some positive-definite function, and studied the properties of the ``parastochastic'' random variable $a_i+a_i^\ast$. However, it was only two decades later, in Bo\.zejko and Speicher's independent study of deformed commutation relations \cite{Bozejko1991}, that the existence question was resolved and such processes constructed. (For existence proofs, see also \cite{Bozejko1994, Fivel1992, Greenberg1991, Speicher1992, Speicher1993,Yu1994, Zagier1992}.) In particular, Bo\.zejko and Speicher constructed a suitably deformed Fock space on which the creation and annihilation operators (analogues of those on the classical Fock spaces) satisfied the deformed commutation relation:
\begin{equation}a_ia_j^\ast-qa_j^\ast a_i=\delta_{i,j}.\tag{$q$-CR}\end{equation}
The setting for these operators is provided by a direct sum of single particle spaces
\begin{equation}\mathscr F=(\mathbb C \Omega)\oplus\bigoplus_{n\geq 1} \H_{\mathbb C}^{\otimes n}\label{F}\end{equation}
with $\Omega$ being some distinguished unit vector and $\H_{\mathbb C}$ the complexification of some real separable Hilbert space $\H$. On this space, one can define the deformed sesquilinear form $\langle \xi,\eta\rangle_q=\langle \xi,P_q\eta\rangle_0,$
where $\langle\;,\;\rangle_0$ is the usual inner product on the full Fock space (cf. Section~\ref{secqtFock}). 
The particularly insightful choice of a \emph{positive} ``projection" operator $P_q$, which would turn $\langle\;,\;\rangle_q$ into a bona fide inner product and have the classical creation operator and its resulting adjoint satisfy ($q$-CR), was the key element in Bo\.zejko and Speicher's construction.
Consider the unitary representation $\pi\mapsto U_\pi^{(n)}$ of the symmetric group $S_n$ on $\H^{\otimes n}$, given by
$U_\pi^{(n)} \,h_1\otimes\ldots\otimes h_n=h_{\pi(1)}\otimes\ldots\otimes h_{\pi(n)}.$
Writing
$P_{q}=\bigoplus_{n=0}^\infty P_{q}^{(n)}$ with $P_{q}^{(n)}:\H^{\otimes n}\to \H^{\otimes n},$
the key idea of Bo\.zejko and Speicher was to replace the (anti-)symmetrization step that would yield the two classical inner products by instead ``$q$-symmetrizing'' as
\begin{equation}P_{q}^{(n)}:=\sum_{\pi\in S_n} q^{\inv(\pi)}\,U_\pi^{(n)},\end{equation}
where $\inv(\pi)$ gives the number of inversions in a permutation $\pi$ (cf. Section~\ref{preliminaries}). By showing that $P_{q}$ was indeed positive definite for $|q|<1$ (or positive semi-definite for $|q|\leq 1$) and completing (and separating) $\mathscr F$ with respect to the resulting inner product $\langle\;,\;\rangle_q$, Bo\.zejko and Speicher thus constructed and introduced the so-called \emph{$q$-Fock space} $\mathscr F_q$.

Following much interest over the course of the two subsequent decades, the $q$-Fock space is now known to have a number of remarkable properties. Far from attempting to overview them all, we now focus on the probabilistic aspects of the algebras of bounded linear operators and point out a few key results. Namely, starting with the very motivation, it was shown in \cite{Bozejko1991} that the deformed field operator $s_q(h):= a_q(h)^\ast+a_q(h)$ is a natural deformation, in the setting of ($q$-CR), of the Gaussian random variable. The corresponding analogues of the complex Gaussian random variable were also constructed \cite{Mingo2001, Kemp2005}. More generally, an analogous formulation was found to give rise to a $q$-deformation of the Brownian motion \cite{Bozejko1991, Bozejko1997} and, even more broadly, to a characterization of the $q$-L\'evy processes \cite{Anshelevich2001}. Yet, there is also much about the structure of the operator algebras on the $q$-Fock spaces that remains mysterious and is object of current research. For example, remaining in the probabilistic vein, it was very recently shown that the $q$-Gaussian random variables are (somewhat unexpectedly) freely infinitely divisible \cite{Anshelevich2010}.

The goal of this paper is to introduce a second-parameter refinement of the $q$-Fock space, formulated as the \emph{$(q,t)$-Fock space} $\mathscr F_{q,t}$. The $(q,t)$-Fock Space is constructed via a direct refinement of Bo\.zejko and Speicher's framework \cite{Bozejko1991}, yielding the $q$-Fock space when $t=1$. Before overviewing the details of the construction and the main results, we take a moment to point out why the present refinement is, in fact, particularly natural. 
\begin{itemize}
\item Starting with the structural aspect and considering the combinatorial framework underlying the $q$-Fock space, the notions of permutation inversions and crossings in pair partitions, which play the key role in the original formulation, are now replaced by the joint statistics given by permutation \emph{inversions and co-inversions} and \emph{crossings and nestings} in pair partitions (cf. Section~\ref{preliminaries} and Section~\ref{secqtSemicircular}). In particular, while the permutation co-inversions are the complements of the permutation inversions, a satisfying characterization of the joint distribution of crossings and nestings of pair partitions is an open problem in combinatorics, of relevance to broader combinatorial questions \cite{Kasraoui2006,Klazar2006,Chen2007}. 
\item Whereas the distribution of the $q$-Gaussian operator is the unique measure that orthogonalizes the $q$-Hermite orthogonal polynomials, given by the three-term recurrence $zH_n(z;q,t)=H_{n+1}(z;q,t)+[n]_{q}H_{n-1}(z;q,t)$, the \emph{$(q,t)$-Gaussian} orthogonalizes the \emph{$(q,t)$-Hermite orthogonal polynomials}. The latter are given by the recurrence $zH_n(z;q,t)=H_{n+1}(z;q,t)+[n]_{q,t}H_{n-1}(z;q,t)$. (cf. Definition/Theorem~\ref{defthm3} below.)
\item The $q=0<t$ case corresponds to a new single-parameter deformation of the full Boltzmann Fock space of free probability \cite{Voiculescu1986,Voiculescu1992} and of the corresponding semicircular operator. The corresponding measure is encoded, in various forms, via the Rogers-Ramanujan continued fraction (e.g. \cite{Andrews}), the Rogers-Ramanujan identities (e.g. \cite{Andrews}), the $t$-Airy function \cite{Ismail2005}, the $t$-Catalan numbers of Carlitz-Riordan \cite{Furlinger1985,Carlitz1964}, and the first-order statistics of the reduced Wigner processes \cite{Khorunzhy,Mazza2002}.
\end{itemize}

At this point, it is also important to note that the $(q,t)$-deformed framework independently arises in a more general asymptotic setting, through non-commutative central limit theorems and random matrix models. This is the subject of the companion paper \cite{Blitvic2}. In particular, analogously to the $q$-annihilation and creation operators which arise as weak limits in the Non-commutative Central Limit Theorem introduced by Speicher \cite{Speicher1992}, the $(q,t)$-annihilation and creation operators appear in the general form of the theorem developed in \cite{Blitvic2}. In broad terms, this second-parameter refinement is a consequence of the passage from a commutation structure built around commutation \emph{signs}, taking values in $\{-1,1\}$, to a more general structure based on commutation coefficients taking values in $\mathbb R$.

However, at this point it should also be remarked that, despite many analogies with the original formulation, the $(q,t)$-Fock space has a surprising property that distinguishes it from the $q$-Fock space when $t\neq 1$. Namely, the vacuum expectation state $\varphi$ (cf. Section~\ref{secqtFock}) is not tracial on the $\ast$-algebra generated by the field operators $\{s_{q,t}(h)\}_{h\in\H}$.

\vspace{10pt}

The following is an overview of the main results in this paper, encompassing an overview of the $(q,t)$-Fock space construction.

\begin{defthm} Let $\pi\mapsto U_\pi^{(n)}$ denote the unitary representation of the symmetric group $S_n$ on $\H^{\otimes n}$ given by
$U_\pi^{(n)} \,h_1\otimes\ldots\otimes h_n=h_{\pi(1)}\otimes\ldots\otimes h_{\pi(n)}$ and, for every permutation $\pi\in S_n$, let $\inv(\pi)$ and $\div(\pi)$ respectively denote the inversions and co-inversions of a permutation (cf. Section~\ref{preliminaries}). Given the vector space $\mathscr F$ given in (\ref{F}), consider the ``projection'' operator $P_{q,t}:\F\to\F$ given by
$P_{q,t}=\bigoplus_{n=0}^\infty P_{q,t}^{(n)}$ with $P_{q,t}^{(n)}:\H^{\otimes n}\to \H^{\otimes n},$
with
\begin{equation}P_{q,t}^{(n)}:=\sum_{\pi\in S_n} q^{\inv(\pi)}t^{\div(\pi)}\,U_\pi^{(n)}.\end{equation}
Consider, further, the sesquilinear form $\langle\;,\;\rangle_{q,t}$ on $\mathscr F$ given, via the usual inner product $\langle\;,\;\rangle_0$ on the full Fock space (cf. Section~\ref{secqtFock}), by $\langle\eta,\xi\rangle_{q,t}=\langle\xi,P_{q,t}\,\eta\rangle_0\quad\forall\,\eta,\xi\in\mathscr F.$

Then, for all $n\in\mathbb N$, $P_{q,t}^{(n)}$ is strictly positive definite for all $|q|<t$, $\langle\;,\;\rangle_{q,t}$ 
is an inner product, and the \emph{$(q,t)$-Fock space} is the completion of $\mathscr F$ with respect to the norm induced by $\langle\;,\;\rangle_{q,t}$.
\end{defthm}

\begin{defthm} Given the $(q,t)$-Fock space $\F_{q,t}$ and the underlying real Hilbert space $\H$, define the respective \emph{creation} and \emph{annihilation} operators $a_{q,t}(f)^\ast$ and $a_{q,t}(f)$ on $\F_{q,t}$, for $f\in H$ and $|q|< t\leq 1$, by linear extension of:
\begin{equation}a_{q,t}(f)^\ast\Omega = f,\quad\quad a_{q,t}(f)^\ast h_1\otimes\ldots\otimes h_n=f\otimes h_1\otimes\ldots\otimes h_n\label{cdef1}\end{equation}
and
\begin{equation}a_{q,t}(f)\Omega = 0, \quad\quad a_{q,t}(f)h_1\otimes\ldots\otimes h_n=\sum_{k=1}^n q^{k-1}t^{n-k} \langle f,h_k\rangle_{_\H}\,  h_1\otimes \ldots\otimes\breve h_k\otimes\ldots\otimes h_n.\label{cdef2}\end{equation}

Then, $a_{q,t}(f)$ and $a_{q,t}(f)^\ast$ are adjoints with respect to $\langle\;,\;\rangle_{q,t}$. Further, for all $f,g\in \H$, the operators satisfy the $(q,t)$-commutation relation
\begin{equation}a_{q,t}(f)a_{q,t}(g)^\ast-q \,a_{q,t}(g)^\ast a_{q,t}(f)= \langle f,g\rangle_{_\H}\, t^{N},\tag{$(q,t)$-CR}\end{equation}
where $t^N$ is the operator on $\F_{q,t}$ defined by the linear extension of $t^N\Omega=\Omega$ and $t^N h_1\otimes\ldots\otimes h_n=t^n h_1\otimes\ldots\otimes h_n$ for all $h_1,\ldots,h_n\in \H$. 
Moreover, for all $n\in\mathbb N$ and $\epsilon(1),\ldots,\epsilon(2n)\in\{1,\ast\}$, the corresponding mixed moment of the creation and annihilation operators on $\F_{q,t}$ is given by
\begin{align}\displaystyle&\varphi_{q,t}(a_{q,t}(h_1)^{\epsilon(1)}\ldots a_{q,t}(h_{2n-1})^{\epsilon(2n-1)})=0\\\nonumber
\\&\varphi_{q,t}(a_{q,t}(h_1)^{\epsilon(1)}\ldots a_{q,t}(h_{2n})^{\epsilon(2n)})=\sum_{\mathscr V\in \mathscr P_{2}(2n)}q^{\cross(\mathscr V)}t^{\nest(\mathscr V)}\varphi(a_{q,t}(h_{w_1})^{\epsilon(w_1)}a_{q,t}(h_{z_1})^{\epsilon(z_1)})\ldots\nonumber\\&\hspace{7cm}\ldots\varphi(a_{q,t}(h_{w_n})^{\epsilon(w_n)}a_{q,t}(h_{z_n})^{\epsilon(z_n)}),\end{align}
where $P_{2}(2n)$ is the collection of pair partitions of $[2n]$ (cf. Section~\ref{preliminaries}), each $\mathscr V\in P_{2}(2n)$ is (uniquely) written as a collection of pairs $\{(w_1,z_1),\ldots,(w_n,z_n)\}$ with $w_1<\ldots<w_n$ and $w_i<z_i$, with $\cross(\mathscr V)$ and $\nest(\mathscr V)$ denoting the numbers of crossings and nestings, respectively, in $\mathscr V$ (cf. Section~\ref{preliminaries}). Finally, for any $f\in\H$, the operator $a_{q,t}(f)$ on $\F_{q,t}$ is bounded for $0\leq |q|< t\leq 1$, with norm given by
\begin{equation}\displaystyle\|a_{q,t}(f)\|=\|a_{q,t}(f)^\ast\|=\left\{\begin{array}{ll}\|f\|_\H& 0\leq -q< t\leq 1,\\&\\\frac{1}{\sqrt{1-q}}\,\|f\|_\H&0<q<t=1\\&\\\sqrt{\frac{t^{n_\ast}-q^{n_\ast}}{t-q}}\,\|f\|_\H&0<q<t<1\end{array}\right.,\end{equation}
for
\begin{equation}n_\ast=\left\lceil\frac{\log\left(1-q\right)-\log\left(1-t\right)}{\log\left(t\right)-\log\left(q\right)}\right\rceil.\end{equation}
\end{defthm}

\begin{defthm} For $h\in\H$, the \emph{$(q,t)$-Gaussian} element $s_{q,t}(h) \in \mathscr B(\mathscr F_{q,t})$ is given by $s_{q,t}(h):=a_{q,t}(h)+a_{q,t}(h)^\ast$. The $(q,t)$-Gaussian is self-adjoint, with moments given by
\begin{eqnarray}
\varphi_{q,t}(s_{q,t}(h)^{2n-1})&=&0\\
\varphi_{q,t}(s_{q,t}(h)^{2n})&=&\|h\|_\H^{2n} \sum_{\mathscr V\in \mathscr P_2(2n)}  \!\!\!q^{\cross(\mathscr V)}\,t^{\nest(\mathscr V)}
=\,\|h\|_\H^{2n}\,\,[z^n]\,\cfrac{1}{1-\cfrac{[1]_{q,t}z}{1-\cfrac{[2]_{q,t}z}{1-\cfrac{[3]_{q,t}z}{\ldots}}}},
\end{eqnarray}
where $[z^n](\cdot)$ denotes the coefficient of the $z^n$ term in the power series expansion of $(\cdot)$ and for all $n\in\mathbb N$,
\begin{equation}[n]_{q,t}:=\sum_{i=1}^n q^{i-1}t^{n-i}=\frac{t^n-q^n}{t-q}.\end{equation} Furthermore, the distribution of $s_{q,t}(e)$, with $\|e\|_\H=1$, is the unique real probability measure that orthogonalizes the \emph{$(q,t)$-Hermite orthogonal polynomial sequence} given by the recurrence 
\begin{equation}zH_n(z;q,t)=H_{n+1}(z;q,t)+[n]_{q,t}H_{n-1}(z;q,t),\label{eqrecurrenceH}\end{equation}
with
$H_0(z;q,t)=1,\,\,H_1(z;q,t)=z.$
\label{defthm3}
\end{defthm}

\begin{defthm} For $q=0<t\leq 1$, $\mathscr B(\F_{0,t})$ is given as the von Neumann algebra generated by $\{a_{q,t}(h)\}_{h\in\H}$. The \emph{$t$-semicircular} element is the $q=0<t\leq 1$ specialization of the $(q,t)$-Gaussian element $s_{0,t}(h)\in \mathscr B(\mathscr F_{0,t})$. The moments of the $t$-semicircular element are given by
\begin{eqnarray*}
\varphi_{0,t}(s_{0,t}(h)^{2n-1})&=&0\\
\varphi_{0,t}(s_{0,t}(h)^{2n})&=&\|h\|_\H^{2n} \sum_{\mathscr V\in NC_2(2n)} t^{\nest(\mathscr V)}=\|h\|_\H^{2n}\,C_n^{(t)}
\end{eqnarray*}
where $NC_2(2n)$ denotes the lattice of non-crossing pair-partitions and $C_n^{(t)}$ are referred to as the Carlitz-Riordan $t$-Catalan  numbers \cite{Furlinger1985,Carlitz1964}, given by the recurrence
\begin{equation}
C_n^{(t)}=\sum_{k=1}^n t^{k-1} C_{k-1}^{(t)} C_{n-k}^{(t)},\label{qCat}
\end{equation}
with $C_0^{(t)}=1$.
The moments of the normalized $t$-semicircular element $s_{0,t}:=s_{0,t}(e)$, for $\|e\|_\H=1$, are encoded by the Rogers-Ramanujan continued fraction as
\begin{equation}\sum_{n\geq 0}\varphi_{q,t}(s_{q,t}(h)^{n})z^n=\cfrac{1}{1-\cfrac{t^0\,z}{1-\cfrac{t^1\,z}{1-\cfrac{t^2\,z}{\ldots}}}}.\end{equation}
Furthermore, the Cauchy transform of the corresponding normalized $t$-semicircular measure $\mu_{0,t}$ is given by 
\begin{equation}\int_{\R}\frac{1}{z-\eta}\,d\mu_{0,t}(\eta)=\frac{1}{z}\,\frac{\sum_{n\geq 0} (-1)^n\,\frac{t^{n^2}}{(1-t)(1-t^2)\ldots(1-t^n)}\,z^{-n}}{\sum_{n\geq 0} (-1)^n\,\frac{t^{n(n-1)} }{(1-t)(1-t^2)\ldots(1-t^n)}\,z^{-n}}=\frac{1}{z}\,\frac{A_t(1/z)}{A_t(1/(zt))},\end{equation}
where $A_t$ denotes the $t$-Airy function of \cite{Ismail2005}, given by
\begin{equation}
A_t(z)=\sum_{n\geq 0}\frac{t^{n^2}}{(1-t)\ldots(1-t^n)}(-z)^n.
\end{equation}
In particular, letting $\{z_j\}_{j\in\mathbb N}$ denote the sequence of zeros of the rescaled $t$-Airy function $A_t(z/t)$, the measure $\mu_{0,t}$ is a discrete probability measure with atoms at
$$\pm \sqrt{t/z_j},\quad j\in\mathbb N$$
with corresponding mass
$$-\frac{A_t(z_j)}{2\,z_j\,A_t^\prime(z_j/t)},$$
where $A_t^\prime(z):=\frac{d}{dz}A_t(z)$. The only accumulation point of $\mu_{0,t}$ is the origin.

The normalized $t$-semicircular measure $\mu_{0,t}$ is also the unique probability measure orthogonalizing the \emph{$t$-Chebyshev II orthogonal polynomial sequence} $\{U_n(z;t)\}_{n\geq 0}$, a specialization of the orthogonal polynomials of Al-Salam and Ismail \cite{Al-Salam1983} and determined by the three-term recurrence
$$zU_n(z;t)=U_{n+1}(z;t)+t^{n-1}U_{n-1}(z;t),$$
with
$$U_0(z;t)=1,\,\,U_1(z;t)=z.$$

Finally, $t$-semicircular measure $\mu_{0,t}$ is, in a certain sense, the weak limit of the first-order statistics of the reduced Wigner process \cite{Khorunzhy,Mazza2002}. Specifically, for all $\rho\in[0,1]$ and $n\in\mathbb N$,
\begin{equation}\lim_{N\to\infty}\varphi_N\left(\frac{W_{N,\rho}(1)}{N}\,\frac{W_{N,\rho}(2)}{N}\ldots \frac{W_{N,\rho}(n)}{N}\right)=\rho^{n/2}\,\varphi_{0,t}(s_{0,t}^n)\quad\quad\text{for}\quad t=\rho^2,\end{equation}
where $\varphi_N=\frac{1}{N} \text{Tr}\otimes \E$ and $\{W_{N,\rho}(k)\}_{k\in\mathbb N}$ is a sequence of Wigner matrices with correlations
\begin{eqnarray}
\E(w_{i,j}(k)w_{i',j'}(k))&=&\left\{\begin{array}{ll}1,&(i,j)=(i',j')\text{ or }(i,j)=(j',i')\\0,&\text{otherwise}\end{array}\right.\\\E(w_{i,j}(k)w_{i,j}(m))&=&\rho^{m-k}\quad\text{ for }m>k,
\end{eqnarray}
with $w_{i,j}(k)$ denoting the $(i,j)^\text{th}$ entry of $W_{N,\rho}(k)$.
\end{defthm}

Finally, recall that the crux of the companion paper \cite{Blitvic2} is the extension of Speicher's \emph{Non-commutative Central Limit Theorem} \cite{Speicher1992}, giving rise to an \emph{an asymptotic model for operators satisfying the commutation relation $(q,t)$-CR}. It should be emphasized that this asymptotic model provides an existence proof, independent of the explicit construction of Section~\ref{secqtFock}, and provides an alternative reason for the fundamental ordering bound present throughout, namely $|q|<t$.

\subsection{Deformed Quantum Harmonic Oscillators}
In physics, the oscillator algebra of the quantum harmonic oscillator is generated by elements $\{1, a, a^\ast,N\}$ satisfying the canonical commutation relations
$$[a, a^\ast] = 1,\quad [N, a] = -a, \quad [N, a^\ast] = a^\ast,$$
where $a^\ast$, $a$, and $N$ can be identified with the creation, annihilation, and number operators on the Bosonic Fock space. Physicists may also speak of \emph{generalized deformed oscillator algebras}, which are instead generated by elements satisfying the deformed relations
$$a^\ast a = f(N),\quad aa^\ast = f(N + 1),\quad [N, a] = -a,\quad [N, a^\ast] = a^\ast,$$
where $f$ is typically referred to as the \emph{structure function of the deformation}.
While an in-depth review of single-parameter deformations of the quantum oscillator algebra is available in \cite{1464-4266-4-1-201}, of particular interest are the so-called Arik-Coon $q$-deformed oscillator algebra \cite{Arik1976} given by
$$aa^\ast - qa^\ast a = 1,\quad [N, a] = -a, [N, a^\ast] = a^\ast, \quad f(n)=\frac{1-q^n}{1-q},\quad q\in\R_+$$
and the Biedengarn-Macfarlane $q$-deformed oscillator algebra \cite{0305-4470-22-18-004,0305-4470-22-21-020} given by
$$aa^\ast - qa^\ast a = q^{-N},\quad [N, a] = -a,\quad [N, a^\ast] = a^\ast,\quad f(n)=\frac{q^{-n}-q^n}{q^{-1}-q}, \quad q\in \R_+.$$
For $q\in [-1,1]$, the Hilbert space representation of the Arik-Coon algebra, generalized to an infinite-dimensional setting, is given by the $q$-Fock space of Bozejko and Speicher \cite{Bozejko1991}. Manipulating Definition/Theorem 1 and 2, the reader may readily verify that for $q\in (0,1]$, a Hilbert space realization of the Biedengarn-Macfarlane algebra is similarly given by the $(q,t)$-Fock space specialized to $t=q^{-1}$. More generally, the Chakrabarti-Jagganathan oscillator algebra \cite{0305-4470-24-13-002} is given by
$$aa^\ast - qa^\ast a = p^{-N},\quad [N, a] = -a,\quad [N, a^\ast] = a^\ast,\quad f(n)=\frac{p^{-n}-q^n}{p^{-1}-q}, \quad q,p\in \R_+.$$
For $0\leq |q|<p^{-1}$, the reader may also verify that the $(q,t)$-Fock space again provides the desired Hilbert space realization for for $t=p^{-1}$. A Bargmann-Fock representation of this algebra was previously considered in \cite{Burban1993} for the case of a single oscillator, but no explicit underlying Hilbert space was constructed nor shown to exist in the parameter range considered. Instead, an explicit representation was provided in the space of analytic functions via suitably deformed differential operators. Note that the two-parameter deformation of the Hermite orthogonal polynomial sequence given by (\ref{eqrecurrenceH}) also appears as recurrence (15) in \cite{Burban1993}.

\subsection{General Brownian Motion}
From a high-level perspective, the $(q,t)$-Gaussian processes fall under the framework of \emph{Generalized Brownian Motion} \cite{Bozejko1996}. The latter is described by families of self-adjoint operators $G(f)$, where $f$ belongs to some real Hilbert space $\H$, and a state $\varphi$ on the algebra generated
by the $G(f)$, given by
$$\varphi(G(f_1)\ldots G(f_{2n}))=\sum_{\mathscr V\in\mathscr P_2(2n)}\tau(\mathscr V)\prod_{(i,j)\in\mathscr V}\langle f_i,f_j\rangle_\H,$$
for some positive definite function $\tau:\mathscr P_2(2n)\to\R$. Thus, the $q$-Brownian motion is given by $\tau_q(\mathscr V) = q^{\cross(\mathscr V)}$, whereas in the present context, the $(q,t)$-Brownian motion corresponds to $\tau_{q,t}(\mathscr V) = q^{\cross(\mathscr V)}t^{\nest(\mathscr V)}$. Many other known $\tau$ functions exist. Most generally, a beautiful framework by Gu{\c{t}}{\u{a}} and Maassen \cite{Guta2001,Guta2001b}, proceeding via the combinatorial theory of species of structures~\cite{Joyal1981}, encompasses the familiar deformations (Bosonic, Fermionic, free, as well as \cite{Bozejko1991,Bozejko1996}). It is foreseeable, though not presently clear, whether (or how) the current formulation can be encompassed within the same framework.

Arguably the most closely-related framework to that of the $(q,t)$-Gaussians is provided by Bo\.zejko in \cite{Bozejko2007}, introduced with the goal of extending the $q$-commutation relations beyond the $q\in[-1,1]$ parameter range. In particular, Bo\.zejko studied operators satisfying two types of deformed commutation relations, namely
$$A(f)A^\ast(g)-A^\ast(g)A(f)=q^N\langle f,g\rangle_\H\quad \text{for } q>1$$
and 
$$B(f)B^\ast(g)+B^\ast(g)B(f)=|q|^N\langle f,g\rangle, \quad\text{for } q<-1.$$
Comparing Bo\.zejko's creation and annihilation operators and his $q$-deformed inner product with the present definitions, Bo\.zejko's setting turns out to be that of the $(q,t)$-Fock space for $q\mapsto 1$ an $t\mapsto q$. The difficulty encountered by Bo\.zejko, pertaining to the fact that the Gaussian element is no longer self-adjoint, is consistent with the fact that for $t>1$, the creation and annihilation operators are no longer bounded.

As later found, a relevant two-parameter deformation of the canonical commutation and anti-commutation relations was previously studied by Bo\.zejko and Yoshida \cite{Bozejko2006}, as part of a more general framework. In \cite{Bozejko2006}, a general version of the $n$-dimensional ``projection'' operator $P_{q,t}^{(n)}$ was given by $\prod_{i=1}^n\tau_i\, \sum_{\pi\in S_n} q^{\inv(\pi)}$, where $\{t_n\}$ is any sequence of positive numbers. The reader may readily verify that the $(q,t)$-projection operator $P_{q,t}^{(n)}$ is recovered for $\tau_n=t^{n-1}$ and by substituting $q\mapsto q/t$. Instead, Bo\.zejko and Yoshida focus on the specialization $\tau_n=s^{2n}$ and $q\mapsto q$, yielding the so-called \emph{$(q,s)$-Fock space}. The corresponding combinatorial structure (now given in terms of crossings and \emph{inner points}), the Wick-type formulas, and continued fractions are considered in \cite{Bozejko2006}.

In the context of the generalized Brownian motion, the point of the present pair of articles, which construct and describe various aspects of the $(q,t)$-Fock space, is to argue that the framework at hand is ultimately a highly natural refinement of the $q$-Fock space. This argument is based on the structural depth of the $(q,t)$-deformed framework, as evidenced by its intimate ties to various fundamental mathematical objects, as well as to the correspondence with the natural generalizations of non-commutative asymptotic frameworks (developped in \cite{Blitvic2}).
\vspace{10pt}

\section{Combinatorial Preliminaries}
\label{preliminaries}
The present section briefly overviews the key combinatorial constructs that will, in the subsequent sections, be liberally aplied to encode the structure of $(q,t)$-Fock spaces, the creation, anihilation, and field operators on these spaces, as well as the mechanics of the relevant non-commutative limit theorems. In broad terms, the objects of interest are \emph{set partitions}, \emph{permutations}, \emph{lattice paths}, and certain combinatorial statistics thereof.

\subsection{Partitions}
Denote by $\mathscr P(n)$ the set of partitions of $[n]:=\{1,\ldots,n\}$ and by $\mathscr P_2(2n)$ the set of \emph{pair partitions} of $[2n]$, that is, the set of paritions of $[2n]$ with each part containing exactly two elements. (Note that the pair partitions are also referred to as \emph{pairings} or \emph{perfect matchings}.) It will be convenient to represent a pair partition as a list of ordered pairs, that is, $\mathscr P_2(2n)\ni\mathscr V=\{(w_1,z_1),\ldots,(w_n,z_n)\}$, where $w_i<z_i$ for $i\in[n]$ and $w_1<\ldots<w_n$.

Of particular interest are the following two statistics on $\mathscr P_2(2n)$.
\begin{defn}[Crossings and Nestings] For $\mathscr V=\{(w_1,z_1),\ldots,(w_{n},z_{n})\}\in\mathscr P_2(2n)$, pairs $(w_i,z_i)$ and $(w_j,z_j)$ are said to \emph{cross} if $w_i<w_j<z_i<z_j$. The corresponding \emph{crossing} is encoded by $(w_i,w_j,z_i,z_j)$ with 
\begin{eqnarray*}\Cross(\mathscr V)&:=&\{(w_i,w_j,z_i,z_j)\mid (w_i,z_i),(w_j,z_j)\in\mathscr V\text{ with } w_i<w_j<z_i<z_j\},\\
\cross(\mathscr V)&:=&|\Cross(\mathscr V)|.\end{eqnarray*}
For $\mathscr V=\{(w_1,z_1),\ldots,(w_{n},z_{n})\}\in\mathscr P_2(2n)$, pairs $(w_i,z_i)$ and $(w_j,z_j)$ are said to \emph{nest} if $w_i<w_j<z_j<z_i$. The corresponding \emph{nesting} is encoded by $(w_i,w_j,z_j,z_i)$ with
\begin{eqnarray*}\Nest(\mathscr V)&:=&\{(w_i,w_j,z_j,z_i)\mid (w_i,z_i),(w_j,z_j)\in\mathscr V\text{ with } w_i<w_j<z_j<z_i\},\\
\nest(\mathscr V)&:=&|\Nest(\mathscr V)|.\end{eqnarray*}
The two concepts are illustrated in Figures~\ref{crnest1} and \ref{crnest2}, by visualizing the pair partitions as collections of disjoint chords with end-points labeled (increasing from left to right) by elements in $[2n]$. 
\label{defCrossNest}
\end{defn}

\begin{figure}\centering\includegraphics[scale=0.5]{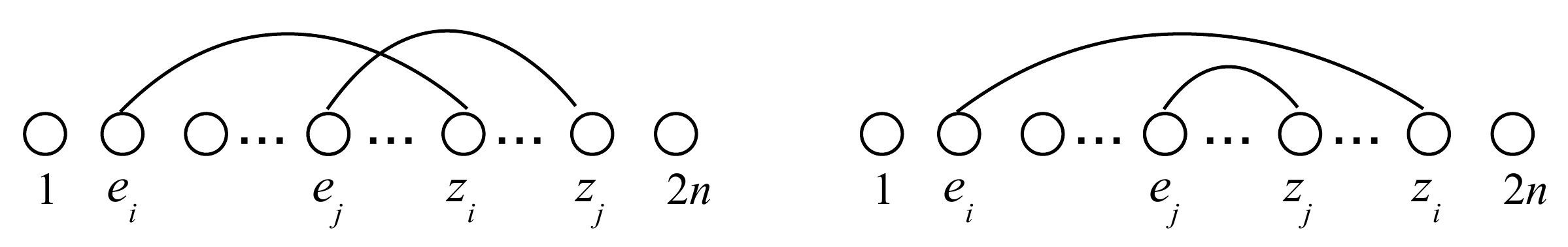}
\caption{An example of a crossing [left] and nesting [right] of a pair partition $\mathscr V=\{(e_1,z_1),\ldots,(e_{n},z_{n})\}$ of $[2n]$.}
\label{crnest1}
\end{figure}
\begin{figure}\centering\includegraphics[scale=0.5]{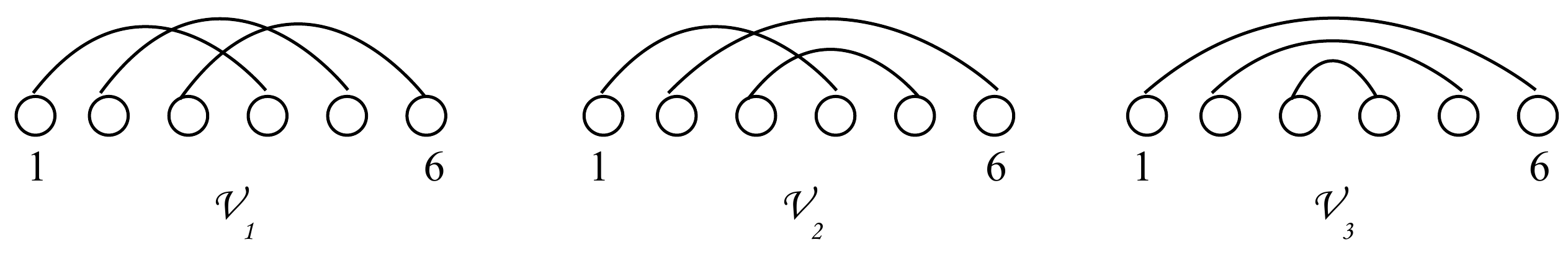}
\caption{Example of three pair paritions on $[2n]=\{1,\ldots,6\}$: $\cross(\mathscr V_1)=3,\,\nest(\mathscr V_1)=0$ [left], $\cross(\mathscr V_2)=2,\,\nest(\mathscr V_2)=1$ [middle], $\cross(\mathscr V_3)=0,\,\nest(\mathscr V_3)=3$ [right].}
\label{crnest2}
\end{figure}

Let $[n]_{q,t}$ denote the $(q,t)$-analogue of a positive integer $n$, given by
\begin{equation}[n]_{q,t} := t^{n-1}+qt^{n-2}+\ldots+q^{n-1}=(t^n-q^n)/(t-q).\end{equation}
Note that letting $t=1$ yields the usual $q$-analogue of integers, i.e. $[n]_{q}=n_{q,1}$. 
In this notation, both the generating functions of crossings in $\mathscr P_2(2n)$ and the joint generating function of crossings and nestings in $\mathscr P_2(2n)$ admit elegant continued fractions, given by
{ \begin{equation*}
\sum_{\substack{n\in\mathbb N,\\\mathscr V\in \mathscr P_2(2n)}}  \!\!\!q^{\cross(\mathscr V)}z^n
=\,\cfrac{1}{1-\cfrac{[1]_{q}z}{1-\cfrac{[2]_{q}z}{1-\cfrac{[3]_{q}z}{\ldots}}}}, \quad\quad\quad\sum_{\substack{n\in\mathbb N,\\\mathscr V\in \mathscr P_2(2n)}}  \!\!\!q^{\cross(\mathscr V)}\,t^{\nest(\mathscr V)}z^n
=\,\cfrac{1}{1-\cfrac{[1]_{q,t}z}{1-\cfrac{[2]_{q,t}z}{1-\cfrac{[3]_{q,t}z}{\ldots}}}}.\label{cfracp}
\end{equation*}}
The above continued fractions can be obtained via a classic encoding of weighted Dyck paths (see \cite{Flajolet1980}, also in a more relevant context \cite{Biane1997} and \cite{Kasraoui2006}).

The generating function of crossings in $\mathscr P_2(2n)$ admits an interesting explicit (even if not closed-form) expression, namely
$$\sum_{\mathscr V\in \mathscr P_2(2n)}  \!\!\!q^{\cross(\mathscr V)}=\frac{1}{(1-q)^n}\sum_{k=-n}^n(-1)^kq^{k(k-1)/2}{{2n}\choose{n+k}},$$
known as the \emph{Touchard-Riordan formula} \cite{Touchard1952,Touchard1950,Touchard1950-2,Riordan1975}. No analogue of the above expression is presently known for the joint generating function of crossings and nestings.

\subsection{Permutations}
Let $S_n$ denote the permutation group on $n$ letters.
.
\begin{defn}[Inversions and Coinversions] Given $\sigma\in S_n$, for $n\geq 2$, the pair $i,j\in[n]$ with $i<j$ is an \emph{inversion in $\sigma$} if $\sigma(i)>\sigma(j)$. The corresponding inversion is encoded by $(i,j)$ and the set of inversions of $\sigma$ is given by $\Inv(\sigma):=\{(i,j)\mid i,j\in[n], i<j, \sigma(i)>\sigma(j)\}$ with cardinality $\inv(\sigma):=|\Inv(\sigma)|$. Analogously, the pair $i,j\in[n]$ with $i<j$ is a \emph{coinversion in $\sigma$} if $\sigma(i)<\sigma(j)$. The corresponding coinversion is encoded by $(i,j)$ and contained in the set $\Div(\sigma):=\{(i,j)\mid i,j\in[n], i<j, \sigma(i)<\sigma(j)\}$ with cardinality $\div(\sigma):=|\Div(\sigma)|$. For $n=1$, the sets of inversions and coinversions are taken to be empty.
\label{defInv}
\end{defn}

It is well known that the generating function of the permutation inversions is given by the so-called $q$-factorial, namely
\begin{equation}
\sum_{\sigma\in S_n} q^{\inv(\sigma)}=\prod_{i=1}^n [i]_q.
\end{equation}
The above expression is in fact readily obtained as a product of the generating functions of the crossings incurred by the ``chord'' $i\mapsto\sigma(i)$ from the chords $j\mapsto \sigma(j)$ for $j>i$. By adapting this reasoning to coinversions, or by realizing that $\div(\sigma)={n\choose 2}-\inv(\sigma)$, the reader may verify that 
\begin{equation}
\sum_{\sigma\in S_n} q^{\inv(\sigma)}t^{\div(\sigma)}=\prod_{i=1}^n [i]_{q,t}.
\end{equation}

For $n\geq 2$ and $\sigma\in S_n$, it is convenient to visually represent $\sigma$ in a two-line notation. Then, $\sigma$ corresponds to a bipartite perfect matching and the inversions correspond to crossings in the diagram and the coinversions to ``non-crossings''. For instance, in adopting this representation, it becomes clear that a permutation and its inverse have the same number of inversions (and therefore also of coinversions). 

\begin{remark}
Representing a permutation $\sigma\in S_n$ in a two-line notation, aligning the two rows, and relabeling yields a unique pair-partition $\pi\in \mathscr P_2(2n)$. By additionally reversing the order of the bottom line, as illustrated in Figure~\ref{figPermPart}, the inversions will correspond to crossings of the pair partition and the co-inversions to the nestings. Indeed, given an inversion $i<j$, $\sigma(i)>\sigma(j)$, this transformation yields the pairs $(i,2n+1-\sigma(i))$ and $(j,2n+1-\sigma(j))$. Since $i<j<2n+1-\sigma(i)<2n+1-\sigma(j)$, the two pairs now form a crossing in $\pi$. Similarly, given a co-inversion $k<m$, $\sigma(k)<\sigma(m)$, the transformation yields the pairs $(k,2n+1-\sigma(k))$ and $(m,2n+1-\sigma(m))$, resulting in the ordering $k<m<2n+1-\sigma(m)<2n+1-\sigma(k)$ and yielding an inversion in $\pi$.

Naturally, this transformation of a permutation into a pair partition is by no means surjective, and is illustrative of the reasons why the set partitions (with the crossings and nestings) and the permutations (with inversions and coinversions) will both be found to feature in the algebraic structure of the $(q,t)$-Fock space.
\end{remark}

\begin{figure}\centering\includegraphics[scale=0.6]{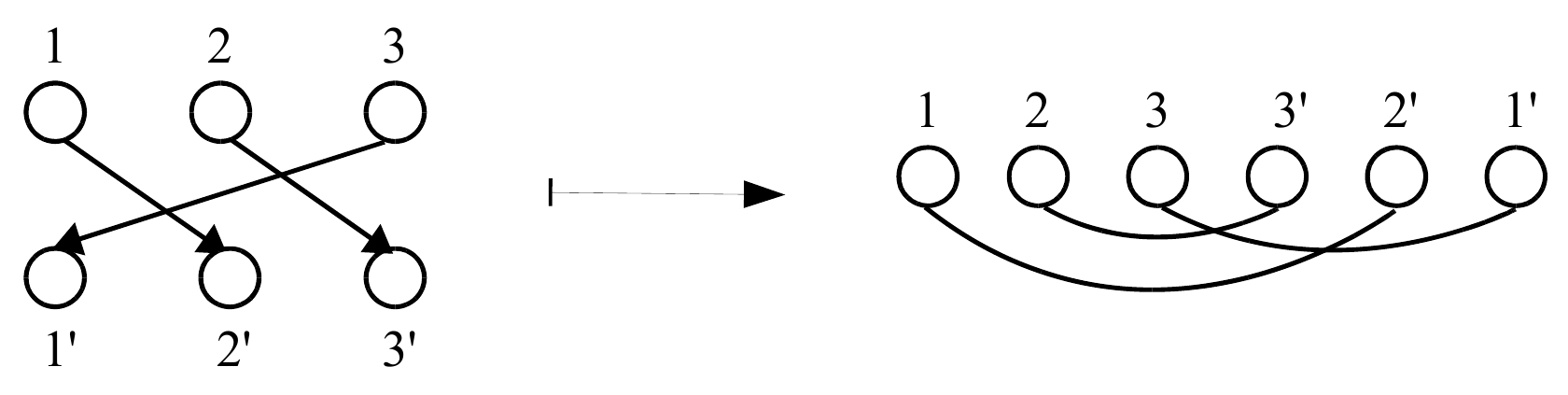}
\caption{Example of transforming the permutation $\sigma=(231)$ into a pair-partition, with permutation inversions corresponding to crossings and coinversions to nestings.}
\label{figPermPart}
\end{figure}

\subsection{Paths}

In the present context, \emph{paths} will refer to finite sequences of coordinates in the lattice $\mathbb Z\times \mathbb Z$. A North-East/South-East (NE/SE) path will refer to a path $(x_0,y_0),(x_1,y_1),\ldots,(x_n,y_n)$ where $(x_0,y_0)=(0,0)$, $x_{i}=x_{i-1}+1$ and $y_{i+1}\in\{y_{i+1}+1,y_{i+1}-1\}$ for $i\in [n]$. Indeed, interpreting the coordinates as vertices and introducing an edge between $(x_{i-1},y_{i-1})$ and $(x_i,y_i)$ for $i\in [n]$ yields a trajectory of a walker in the plane, starting out at the origin and moving, at each step of length $\sqrt 2$, either in the NE or SE direction. A \emph{Dyck path of length $2n$} is NE/SE path $(x_0,y_0),(x_1,x_2),\ldots,(x_n,y_n)$ where $y_i\geq 0$ for all $i\in [n-1]$ and $y_n=0$. Given $n\in\mathbb N$, the set of Dyck paths of length $2n$ will be denoted by $D_n$. The reader is referred to Figure~\ref{pathDyck} of Section~\ref{secqtSemicircular} for an illustration. 

Counted by the Catalan number $C_n=(n+1)^{-1} {2n\choose n}$, Dyck paths are found to be in bijective correspondence with a surprising number of combinatorial objects. In the present context, in Section~\ref{secqtSemicircular}, weighted Dyck paths will be found to encode the moments of the creation and field operators on the $(q,t)$-Fock space.

\section{The $(q,t)$-Fock Space}
\label{secqtFock}
The present section constructs the $(q,t)$-Fock space as a refinement of the construction introduced in \cite{Bozejko1991}.

Consider a real, separable Hilbert space $\mathscr H$ and some distinguished vector $\Omega$ disjoint from $\H$. Let $\mathscr F=(\mathbb C \Omega)\oplus\bigoplus_{n\geq 1} \H_{\mathbb C}^{\otimes n}$, where $\H_{\mathbb C}$ is the complexification of $\H$ and both the direct sum and tensor product are understood to be algebraic. In particular, $\mathscr F$ can be viewed as the vector space over $\mathbb C$ generated by $\{\Omega\}\cup\{h_1\otimes\ldots\otimes h_n\}_{h_i\in \mathscr H,n\in\mathbb N}$.

\sp
For $f\in \H$ and $q,t\in\R$, define the operators $a(f)^\ast $ and $a(f)$ on $\F$ by linear extension of:
\begin{equation}a(f)^\ast \Omega = f,\quad\quad a(f)^\ast h_1\otimes\ldots\otimes h_n=f\otimes h_1\otimes\ldots\otimes h_n\label{cdef1}\end{equation}
and
\begin{equation}a(f)\Omega = 0, \quad\quad a(f)h_1\otimes\ldots\otimes h_n=\sum_{k=1}^n q^{k-1}t^{n-k} \langle f,h_k\rangle_{_\H}\,  h_1\otimes \ldots\otimes\breve h_k\otimes\ldots\otimes h_n,\label{cdef2}\end{equation}
where the superscript $\breve h_k$ indicates that $h_k$ has been deleted from the product. Note that the creation operator $a(f)^\ast $ is defined identically to the operator $c^\ast(f)$ in \cite{Bozejko1991}, whereas the ``twisted'' annihilation operator $a(f)$ is the refinement of the operator $c(f)$ in \cite{Bozejko1991} by a second parameter, $t$. Next, define the operator $t^N$ on $\F$ by linear extension of
\begin{equation}t^N \Omega = \Omega,\quad\quad t^N h_1\otimes\ldots\otimes h_n=t^n\,h_1\otimes\ldots\otimes h_n.\end{equation}
For $t>0$, the operator can be written in a somewhat more natural form as $e^{\alpha N}$, where $\alpha=\log(t)$.

\begin{lem} For all $f,g\in\H$, the operators $a(f),a(g)^\ast$ on $\F$ fulfill the relation
$$a(f)a(g)^\ast -q \,a(g)^\ast a(f)= \langle f,g\rangle_{_\H}\, t^{N}.$$
\label{comrel}
\end{lem}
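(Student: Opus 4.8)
The plan is to verify the commutation relation by a direct computation on basis vectors, applying each operator to an arbitrary simple tensor $h_1\otimes\ldots\otimes h_n$ (and separately to $\Omega$), and checking that the two sides agree. Since all three operators $a(f)a(g)^\ast$, $a(g)^\ast a(f)$, and $t^N$ are defined by linear extension from their action on simple tensors, it suffices to establish the identity on these generators of $\F$.

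First I would treat the vacuum case. Applying the left side to $\Omega$: since $a(g)^\ast\Omega=g$ we get $a(f)g = \langle f,g\rangle_\H\,\Omega$ (the $n=1$ case of (\ref{cdef2}), where the single weight is $q^0t^0=1$), while $a(f)\Omega=0$ kills the second term. The right side gives $\langle f,g\rangle_\H\,t^N\Omega=\langle f,g\rangle_\H\,\Omega$, so the two agree. This base case also fixes the normalization and confirms the weighting convention.

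For the generic simple tensor $h_1\otimes\ldots\otimes h_n$, the main computation is to expand both composite operators. Applying $a(g)^\ast$ first prepends $g$ to get $g\otimes h_1\otimes\ldots\otimes h_n$, a tensor of length $n+1$; then $a(f)$ acts via (\ref{cdef2}) with the new index range $k=1,\ldots,n+1$. The $k=1$ term extracts $\langle f,g\rangle_\H$ with weight $q^0 t^n=t^n$ and returns $t^n\,h_1\otimes\ldots\otimes h_n$; the remaining terms $k=2,\ldots,n+1$ contract $f$ against the original $h_{k-1}$, each carrying the shifted weight $q^{k-1}t^{(n+1)-k}$. Separately, $a(f)$ acting first produces $\sum_{k=1}^n q^{k-1}t^{n-k}\langle f,h_k\rangle_\H\,h_1\otimes\ldots\breve h_k\ldots\otimes h_n$, after which $a(g)^\ast$ prepends $g$. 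The point is that all the ``deep'' contraction terms (where $f$ pairs with some $h_j$ rather than with $g$) appearing in $a(f)a(g)^\ast$ match, up to exactly one factor of $q$ in the exponent, the corresponding terms in $q\,a(g)^\ast a(f)$; reindexing and comparing the exponents term-by-term shows these cancel in the difference $a(f)a(g)^\ast-q\,a(g)^\ast a(f)$. What survives is solely the $k=1$ term, namely $\langle f,g\rangle_\H\,t^n\,h_1\otimes\ldots\otimes h_n=\langle f,g\rangle_\H\,t^N\,(h_1\otimes\ldots\otimes h_n)$, which is precisely the right side.

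The main obstacle, though entirely bookkeeping in nature, is getting the exponent arithmetic to line up cleanly. When $a(g)^\ast$ acts first, the tensor length increases to $n+1$, so the annihilation weights become $q^{k-1}t^{(n+1)-k}$ on a length-$(n+1)$ tensor; after reindexing $k\mapsto k+1$ to compare with the length-$n$ computation in $a(g)^\ast a(f)$, the weight picks up exactly one extra power of $q$ relative to the corresponding term there, which is what the factor of $q$ in front of $a(g)^\ast a(f)$ is designed to absorb. I would organize the proof by carefully tabulating, for each position $j$ where $f$ could contract with $h_j$, the weight it receives in each of the two orderings and confirming the ratio is $q$; the only position with no counterpart is the contraction of $f$ with the freshly-prepended $g$, whose weight is $t^n$, yielding the $t^N$ term. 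This term-matching is the crux, and the rest is routine linear extension.
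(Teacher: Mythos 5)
Your proposal is correct and follows essentially the same route as the paper's proof: a direct computation on simple tensors, expanding $a(f)a(g)^\ast$ by prepending $g$ and then annihilating, reindexing the contraction terms $k\mapsto k+1$ so that the extra factor of $q$ is absorbed by the $q\,a(g)^\ast a(f)$ term, and identifying the surviving $k=1$ term $t^n\langle f,g\rangle_\H$ with the action of $\langle f,g\rangle_\H\,t^N$. The only difference is that you also verify the vacuum case explicitly, which the paper leaves implicit; this is a harmless (indeed slightly more complete) addition.
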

\begin{proof} For any $n\in\mathbb N$ and $g,h_1,\ldots,h_n\in\H$,
\begin{align*}&a(f)a(g)^\ast h_1\otimes\ldots\otimes h_n= a(f) g\otimes h_1\otimes\ldots\otimes h_n\\
&= t^{n} \langle f,g\rangle_{_\H}\, h_1\otimes\ldots\otimes h_n + \sum_{k=2}^{n+1} q^{k-1}t^{n+1-k}\langle f,h_{k-1}\rangle_{_\H} g\otimes h_1\otimes \ldots\otimes\breve h_{k-1}\otimes\ldots\otimes h_n\\
&= t^{n} \langle f,g\rangle_{_\H}\, h_1\otimes\ldots\otimes h_n + g\otimes\left(\sum_{k=1}^{n} q^{k}t^{n-k}\langle f,h_{k}\rangle_{_\H} h_1\otimes \ldots\otimes\breve h_{k}\otimes\ldots\otimes h_n\right)\\
&= t^{n} \langle f,g\rangle_{_\H}\, h_1\otimes\ldots\otimes h_n +  q\, a(g)^\ast a(f) h_1\otimes\ldots\otimes h_n
\end{align*}
\end{proof}

Define the sesquilinear form $\langle,\rangle_{q,t}$ on $\F$ by
\begin{equation}\langle g_1\otimes\ldots\otimes g_n,h_1\otimes\ldots\otimes h_m\rangle_{q,t}=0\quad \text{for } m\neq n\label{defform1}\end{equation}
and otherwise recursively by
\begin{equation}\langle g_1\otimes\ldots\otimes g_n,h_1\otimes\ldots\otimes h_n\rangle_{q,t}=\sum_{k=1}^n q\,^{k-1}\,\,t\,^{n-k}\langle g_1,h_k\rangle_{_\H}\langle g_2\otimes\ldots\otimes g_n,h_1\otimes \ldots\otimes\breve h_{k}\otimes\ldots\otimes h_n\rangle_{q,t}.\label{defform}\end{equation}

\begin{remark} For all $q$, setting $t=1$ recovers the usual inner product on the $q$-Fock space of \cite{Bozejko1991}. Letting $t=0$ yields a sesquilinear form on the full Fock space that is given by
$$\langle g_1\otimes\ldots\otimes g_n,h_1\otimes\ldots\otimes h_n\rangle_{q,0}=q^{n\choose 2}\langle g_1,h_n\rangle_{_\H}\ldots \langle g_n,h_1\rangle_{_\H},$$
which does not generally satisfy the positivity requirement of an inner product.
\end{remark}

The range of $t$ required for $\langle,\rangle_{q,t}$ to be an inner product will be characterized shortly, in Lemma~\ref{lempositivity}. But, first, note that for all $f\in\H$, $a(f)^\ast$ is indeed the adjoint of $a(f)$ with respect to $\langle,\rangle_{q,t}$.

\begin{lem} For all $f\in\H$, $\xi,\eta\in \F$,
$$\langle a(f)^\ast \xi,\eta\rangle_{q,t}=\langle\xi,a(f)\eta\rangle_{q,t}.$$\end{lem}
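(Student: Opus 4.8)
The plan is to reduce the identity to the defining basis of $\F$ and then compare the two sides term by term. Since $a(f)$ and $a(f)^\ast$ are defined by linear extension and $\langle\cdot,\cdot\rangle_{q,t}$ is sesquilinear, it suffices to verify the identity when $\xi$ and $\eta$ are each either $\Omega$ or a simple tensor, say $\xi=g_1\otimes\ldots\otimes g_m$ and $\eta=h_1\otimes\ldots\otimes h_n$.

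First I would dispose of the degenerate cases using the grading of $\F$. Because $a(f)^\ast$ raises tensor length by one, $a(f)$ lowers it by one, and $\langle\cdot,\cdot\rangle_{q,t}$ vanishes between homogeneous components of different lengths (by (\ref{defform1})), both sides are automatically zero unless $\eta$ has exactly one more tensor factor than $\xi$. In particular, when $\xi=\Omega$ and $\eta=h_1$ both sides collapse to $\langle f,h_1\rangle_{_\H}$, using that $a(f)h_1=\langle f,h_1\rangle_{_\H}\,\Omega$ together with the $n=1$ base case of the recursion, while every other configuration involving $\Omega$ forces both sides to vanish. This leaves the single nontrivial case $\xi=g_1\otimes\ldots\otimes g_m$ and $\eta=h_1\otimes\ldots\otimes h_{m+1}$.

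For this case I would expand each side explicitly. On the left, $a(f)^\ast\xi=f\otimes g_1\otimes\ldots\otimes g_m$, so applying the recursion (\ref{defform}) with $f$ in the role of the new first factor and $n=m+1$ gives
$$\langle a(f)^\ast\xi,\eta\rangle_{q,t}=\sum_{k=1}^{m+1}q^{k-1}t^{m+1-k}\langle f,h_k\rangle_{_\H}\,\langle g_1\otimes\ldots\otimes g_m,\,h_1\otimes\ldots\otimes\breve h_k\otimes\ldots\otimes h_{m+1}\rangle_{q,t}.$$
On the right, the definition of $a(f)$ expands $a(f)\eta$ as the sum $\sum_{k=1}^{m+1}q^{k-1}t^{m+1-k}\langle f,h_k\rangle_{_\H}\,h_1\otimes\ldots\otimes\breve h_k\otimes\ldots\otimes h_{m+1}$; pulling these scalars out of the linear slot of the form yields precisely the same expression. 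Since $f\in\H$, each coefficient $\langle f,h_k\rangle_{_\H}$ is real, so no conjugation discrepancy can arise between the two slots. The two expansions therefore agree term by term, proving the identity.

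The computation is entirely mechanical, so there is no substantive obstacle. The only points demanding care are the length bookkeeping, so that the $\Omega$ and length-mismatch cases are handled uniformly, and the verification that the weight $q^{k-1}t^{m+1-k}$ produced by prepending $f$ in the recursion for $\langle\cdot,\cdot\rangle_{q,t}$ coincides exactly with the weight attached to the $k$-th term in the definition of $a(f)$. This matching is precisely why the ``twisted'' annihilation operator carries these exponents: the adjointness is built into the compatibility of the recursion (\ref{defform}) with the definition of $a(f)$.
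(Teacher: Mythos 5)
Your proof is correct and follows essentially the same route as the paper's: the paper likewise reduces to simple tensors and observes that expanding $\langle f\otimes g_1\otimes\ldots\otimes g_m,\,h_1\otimes\ldots\otimes h_{m+1}\rangle_{q,t}$ via the recursion (\ref{defform}) reproduces exactly the weights $q^{k-1}t^{m+1-k}\langle f,h_k\rangle_{_\H}$ defining $a(f)$. Your additional handling of the length-mismatch and $\Omega$ cases, and the remark that realness of $\langle f,h_k\rangle_{_\H}$ avoids conjugation issues, are details the paper leaves implicit.
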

\begin{proof} It suffices to note that, directly from the previous definitions,
\begin{align*}&\langle a(f)^\ast g_1\otimes\ldots\otimes g_n,h_1\otimes\ldots\otimes h_{n+1}\rangle_{q,t}=\langle f\otimes g_1\otimes\ldots\otimes g_n,h_1\otimes\ldots\otimes h_{n+1}\rangle_{q,t}\\
&=\sum_{k=1}^{n+1}q\,^{k-1}\,\, t\,^{n+1-k}  \langle f,h_k\rangle_{_\H}\, \langle g_1\otimes\ldots\otimes g_n,h_1\otimes \ldots\otimes\breve h_{k}\otimes\ldots\otimes h_{n+1}\rangle_{q,t}\\&=\langle g_1\otimes\ldots\otimes g_n,a(f)h_1\otimes\ldots\otimes h_{n+1}\rangle_{q,t}
\end{align*}
\end{proof}

Still in line with \cite{Bozejko1991}, it is convenient define the ``projection" operator $P_{q,t}:\F\to\F$ allowing one to express the sesquilinear form $\langle,\rangle_{q,t}$ via the usual scalar product $\langle,\rangle_0$ on the full Fock space. Consider the unitary representation $\pi\mapsto U_\pi^{(n)}$ of the symmetric group $S_n$ on $\H^{\otimes n}$, given by
$$U_\pi^{(n)} \,h_1\otimes\ldots\otimes h_n=h_{\pi(1)}\otimes\ldots\otimes h_{\pi(n)}.$$
Recalling the permutation statistics given by inversions and co-inversions, defined in the previous section (cf. Definition~\ref{defInv}), let
$$P_{q,t}=\bigoplus_{n=0}^\infty P_{q,t}^{(n)}\quad\quad\text{with}\quad P_{q,t}^{(n)}:\H^{\otimes n}\to \H^{\otimes n},$$
\begin{equation}P_{q,t}^{(n)}:=\sum_{\pi\in S_n} q^{\inv(\pi)}t^{\div(\pi)}\,U_\pi^{(n)},\end{equation}
where for the unique $\pi\in S_1$, it is understood that $\inv(\pi)=\div(\pi)=0$. Thus, $P_{q,t}^{(1)}=1$ and there is no change to the scalar product on the single-particle space. For $n\geq 2$, note that for every $1\leq i<j\leq n$, the pair $(i,j,\pi(i),\pi(j))$ is either an inversion or a coinversion, and therefore $\inv(\pi)+\div(\pi)={n\choose 2}$. It follows that for $n\geq 2$ and $t\neq 0$, the projection\footnote{Strictly speaking, $P_{q,t}$ is not a projection for general values of $q$ and $t$, as $\left(P_{q,t}^{(n)}\right)^2=\sum_{\pi\in S_n} q^{2\inv(\pi)}t^{2\div(\pi)}\,U_\pi^{(n)}$, which follows from the fact that a permutation and its inverse share the same number of (co)inversions (cf. Section~\ref{preliminaries}). The terminology is inherited from the classical construction, as $P_{q,t}^2=P_{q,t}$ if (and only if) $|q|=t=1$.} operator can equivalently be expressed as
\begin{equation}P_{q,t}^{(n)}=t^{n\choose 2}\sum_{\pi\in S_n} \left(\frac{q}{t}\right)^{\inv(\pi)}\,U_\pi^{(n)}=t^{n\choose 2}P_{q/t}^{(n)},\label{alternativeP}\end{equation}
where $P_{q}^{(n)}$ denotes the projection operator on the subspace $\H^{\otimes n}$ of the $q$-Fock space (see \cite{Bozejko1991}).

\begin{lem} For all $\xi, \eta\in\F$,
$$\langle\eta,\xi\rangle_{q,t}=\langle\xi,P_{q,t}\,\eta\rangle_0.$$\label{lemP}
\end{lem}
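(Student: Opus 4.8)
The plan is to prove the identity by induction on the tensor length $n$, after first reducing to simple tensors. Since both $\langle\,,\,\rangle_{q,t}$ and $\langle\,,\,\rangle_0$ are sesquilinear and $P_{q,t}$ is block-diagonal with respect to the grading of $\F$, it suffices to verify the claim for $\eta=g_1\otimes\ldots\otimes g_n$ and $\xi=h_1\otimes\ldots\otimes h_m$. When $m\neq n$ both sides vanish — the left by \eqref{defform1}, the right because $\langle\,,\,\rangle_0$ annihilates tensors of unequal length and $P_{q,t}$ preserves length — so I may take $m=n$. The base case $n=1$ is immediate, since $P_{q,t}^{(1)}=1$ and both forms restrict to $\langle\,,\,\rangle_\H$ on the single-particle space (the two arguments appearing swapped, which absorbs the conjugation).

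For the inductive step I would expand the right-hand side directly. Writing $P_{q,t}(g_1\otimes\ldots\otimes g_n)=\sum_{\pi\in S_n}q^{\inv(\pi)}t^{\div(\pi)}\,g_{\pi(1)}\otimes\ldots\otimes g_{\pi(n)}$ and using the product form of $\langle\,,\,\rangle_0$, the right-hand side becomes $\sum_{\pi\in S_n}q^{\inv(\pi)}t^{\div(\pi)}\prod_{i=1}^n\langle h_i,g_{\pi(i)}\rangle_\H$. The factor carrying the first letter $g_1$ is $\langle h_k,g_1\rangle_\H$ with $k=\pi^{-1}(1)$, so the natural move is to group the sum according to the value of $k$, i.e.\ according to which slot $h_k$ is paired with $g_1$ — mirroring the way the defining recursion \eqref{defform} peels off $g_1$ and matches it against each $h_k$ with weight $q^{k-1}t^{n-k}$.

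The combinatorial heart, and the step I expect to be the main obstacle, is the decomposition of the permutation statistics under deletion of the minimal value. Fixing $k=\pi^{-1}(1)$ sets up a bijection between $\{\pi\in S_n:\pi^{-1}(1)=k\}$ and $S_{n-1}$, obtained by deleting position $k$ together with the value $1$ and relabelling the surviving positions and values in an order-preserving fashion to obtain $\pi'\in S_{n-1}$. Because $1$ is the least value, at position $k$ it forms an inversion with each of the $k-1$ positions to its left and a coinversion with each of the $n-k$ positions to its right, while all remaining inversions and coinversions are preserved by the relabelling; hence $\inv(\pi)=(k-1)+\inv(\pi')$ and $\div(\pi)=(n-k)+\div(\pi')$, giving the factorization $q^{\inv(\pi)}t^{\div(\pi)}=q^{k-1}t^{n-k}\,q^{\inv(\pi')}t^{\div(\pi')}$. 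Verifying this bijection and weight split cleanly is where the real work lies.

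Substituting the factorization, the right-hand side becomes $\sum_{k=1}^n q^{k-1}t^{n-k}\langle h_k,g_1\rangle_\H\sum_{\pi'\in S_{n-1}}q^{\inv(\pi')}t^{\div(\pi')}\prod_{i\neq k}\langle h_i,g_{\pi(i)}\rangle_\H$, whose inner sum is exactly $\langle h_1\otimes\ldots\otimes\breve h_k\otimes\ldots\otimes h_n,\,P_{q,t}^{(n-1)}(g_2\otimes\ldots\otimes g_n)\rangle_0$. Applying the inductive hypothesis to this $(n-1)$-fold term and comparing with \eqref{defform} yields the claim. The remaining bookkeeping is purely the conjugation convention, which is handled automatically by the swapped arguments in $\langle\eta,\xi\rangle_{q,t}=\langle\xi,P_{q,t}\eta\rangle_0$, so no additional estimate is needed.
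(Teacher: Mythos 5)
Your proof is correct and takes essentially the same route as the paper's: induction on tensor length, reducing to simple tensors, and decomposing each $\pi\in S_n$ into the choice of the slot $k$ paired with $g_1$ together with a permutation of $n-1$ letters, with the identical weight split $\inv(\pi)=(k-1)+\inv(\pi')$, $\div(\pi)=(n-k)+\div(\pi')$. The only cosmetic difference is the direction: you expand the $\langle\,\cdot\,,P_{q,t}\,\cdot\,\rangle_0$ side and group by $k=\pi^{-1}(1)$, whereas the paper expands the defining recursion for $\langle\,,\,\rangle_{q,t}$ and groups by $k=\pi(1)$ --- equivalent formulations under $\pi\mapsto\pi^{-1}$, which preserves both statistics.
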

\begin{proof} It suffices to prove that for all $n\in\mathbb N$ and all $g_i,h_j\in\H$,
\begin{eqnarray*}\langle g_1\otimes\ldots\otimes g_n,h_1\otimes\ldots\otimes h_n\rangle_{q,t}&=&\langle g_1\otimes\ldots\otimes g_n,P_{q,t}h_1\otimes\ldots\otimes h_n\rangle_0\\
&=&\sum_{\pi\in S_n}q^{\inv(\pi)}t^{\div(\pi)}\langle g_1,h_{\pi(1)}\rangle_\H\ldots\langle g_n,h_{\pi(n)}\rangle_\H
\end{eqnarray*}
The claim clearly holds for $n=1$. Proceding by induction on $n$, recall that from the definition of $\langle,\rangle_{q,t}$,
\begin{eqnarray*}\langle g_1\otimes\ldots\otimes g_n,h_1\otimes\ldots\otimes h_n\rangle_{q,t}&=&\sum_{k=1}^n q^{k-1}t^{n-k}\langle g_1,h_k\rangle_{_\H}\langle g_2\otimes\ldots \otimes g_n,h_1\otimes\ldots\otimes \breve h_k\otimes \ldots\otimes h_n\rangle_{q,t}.\end{eqnarray*}
Letting $S_n^{(k)}$ denote all bijections from $\{2,\ldots,n\}$ to  $\{1,\ldots,\breve k,\ldots, n\}$, note that, for all $n\in\mathbb N$, every $\pi\in S_n$ can be uniquely decomposed as a pair $(k,\sigma)$ for some $k\in [n]$ and $\sigma\in S_{n-1}^{(k)}$ and, conversely, that any such pair gives a distinct element of $S_n$. Specifically, let $\pi(1)=k$ and $\pi(\ell)=\sigma(\ell)$ for $\ell \in \{2,\ldots,n\}$. Then, noting that the natural correspondences $[n-1]\leftrightarrow \{2,\ldots,n\}$ and $[n-1]\leftrightarrow \{1,\ldots,\breve k,\ldots n\}$ are order-preserving, the inductive hypothesis on $n-1$ can be written as
$$\langle g_2\otimes\ldots \otimes g_n,h_1\otimes\ldots\otimes \breve h_k\otimes \ldots\otimes h_n\rangle_{q,t}=\sum_{\sigma\in S_{n-1}^{(k)}}q^{\inv(\sigma)}t^{\div(\sigma)}\langle g_2,h_{\sigma(2)}\rangle_\H\ldots \langle g_n,h_{\sigma(n)}\rangle_\H,$$
where $\inv(\sigma)$ counts all the pairs $(i,j)\in \{2,\ldots,n\}\times \{1,\ldots,\breve k,\ldots, n\}$ with $i<j$ and $\pi(i)>\pi(j)$  and $\div(\sigma)$ is defined analogously. Furthermore, observing that $\inv(\pi) = \inv(\sigma) + k-1$ and $\div(\pi)=\div(\sigma)+n-k$, as demonstrated in the caption of Figure~\ref{proof1}, it follows that
\begin{align*}&\langle g_1\otimes\ldots\otimes g_n,h_1\otimes\ldots\otimes h_n\rangle_{q,t}\\
&= \sum_{k=1}^n q^{k-1}t^{n-k}\langle g_1,h_k\rangle_{_\H} \sum_{\sigma\in S_{n-1}^{(k)}}q^{\inv(\sigma)}t^{\div(\sigma)}\langle g_2,h_{\sigma(2)} \rangle_\H\ldots\langle g_n,h_{\sigma(n)}\rangle_\H\\
&=\sum_{\pi\in S_{n}}q^{\inv(\pi)}t^{\div(\pi)}\langle g_1,h_{\pi(1)}\rangle_\H\langle g_2,h_{\pi(2)}\rangle_\H\ldots\langle g_n,h_{\pi(n)}\rangle_\H\end{align*}
\end{proof}

\begin{figure} \centering
\includegraphics[scale=0.5]{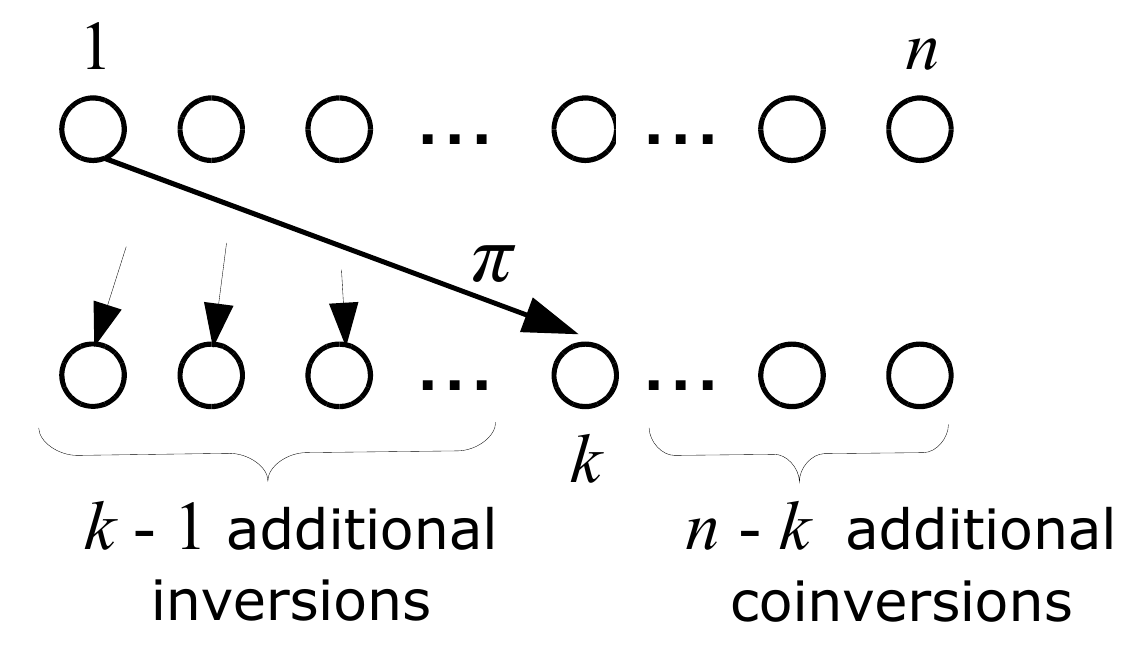}
\caption{\small If $\pi(1)=k$, then there are exactly $k-1$ elements from the set $\{2,\ldots,n\}$ that map under $\pi$ to an element in $\{1,\ldots,k-1\}$ and exactly $n-k$ elements from $\{2,\ldots,n\}$ that map to $\{k+1,\ldots,n\}$. Thus, there are $k-1$ elements $j>1$ for which $\pi(j)<\pi(1)$ and $n-k$ elements $\ell>1$ for which $\pi(\ell)>\pi(1)$. For any other pair $(i,j)\in [n]^2$ with $i\neq 1$, the corresponding inversion or coinversion is given by the map $\sigma:\{2,\ldots,n\}\to\{1,\ldots,\breve k,\ldots,n\}$. It follows that $\inv(\pi)=k-1+\inv(\sigma)$ and $\div(\pi)=n-k+\div(\sigma)$.}\label{proof1}
\end{figure}

While the following facts, contained in Lemmas~\ref{lempositivity} through \ref{lemmanorm}, have direct proofs analogous to those in \cite{Bozejko1991}, it is more convenient to use (\ref{alternativeP}) and derive the desired properties from those of the $q$-Fock space.

\begin{lem}
a) The operator $P_{q,t}$ is positive for all $|q|\leq t$.\\
b) The operator $P_{q,t}$ is strictly positive for all $|q|< t$.
\label{lempositivity}
\end{lem}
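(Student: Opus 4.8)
The plan is to avoid re-deriving any positivity estimate from scratch and instead to leverage the factorization (\ref{alternativeP}), which reduces every statement about $P_{q,t}^{(n)}$ to the corresponding---and already established---statement about the one-parameter symmetrizer $P_{q/t}^{(n)}$ of \cite{Bozejko1991}. Recall that Bo\.zejko and Speicher proved that $P_s^{(n)}=\sum_{\pi\in S_n}s^{\inv(\pi)}U_\pi^{(n)}$ is positive (semidefinite) for $|s|\le 1$ and strictly positive definite for $|s|<1$. The first step is simply to record (\ref{alternativeP}): for $n\ge 2$ and $t\neq 0$ one has $P_{q,t}^{(n)}=t^{{n\choose 2}}P_{q/t}^{(n)}$, while $P_{q,t}^{(1)}=1$ is the identity on the single-particle space and hence trivially (strictly) positive.

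For part (b), suppose $|q|<t$. Then necessarily $t>0$ (since $t>|q|\ge 0$), so the scalar $t^{{n\choose 2}}$ is strictly positive and the ratio satisfies $|q/t|<1$. By the Bo\.zejko--Speicher result, $P_{q/t}^{(n)}$ is strictly positive definite, and multiplying a strictly positive operator by a positive constant preserves strict positivity; hence each block $P_{q,t}^{(n)}$, and therefore $P_{q,t}=\bigoplus_n P_{q,t}^{(n)}$, is strictly positive.

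For part (a), suppose $|q|\le t$. If $t>0$, the identical argument with $|q/t|\le 1$ shows $P_{q/t}^{(n)}$ is positive semidefinite, whence $P_{q,t}^{(n)}=t^{{n\choose 2}}P_{q/t}^{(n)}\ge 0$. The only degenerate case is $t=0$, which forces $q=0$; then for $n\ge 2$ one has $P_{q,t}^{(n)}=0$ (consistent with the Remark that the $t=0$ form fails strict positivity), which is still positive semidefinite, so (a) holds in all cases.

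The point worth emphasizing is that there is no genuine analytic obstacle here: the entire content of the positivity---the part that in the single-parameter theory requires the determinant computation of Bo\.zejko and Speicher (or the arguments of \cite{Zagier1992, Fivel1992, Speicher1992})---is imported wholesale through (\ref{alternativeP}). The only things one must verify are the elementary observations that $t^{{n\choose 2}}$ is a strictly positive scalar and that $|q|<t\iff|q/t|<1$ (and likewise with $\le$), together with the bookkeeping for the degenerate indices $n=1$ and $t=0$. A self-contained alternative would be to mimic the Bo\.zejko--Speicher recursion directly for the $(q,t)$-symmetrizer, but that would merely reprove a known fact and is strictly less efficient than the rescaling route already set up in the text.
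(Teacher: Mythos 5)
Your proof is correct and follows essentially the same route as the paper: both reduce positivity of $P_{q,t}^{(n)}$ to the Bo\.zejko--Speicher positivity of $P_{q/t}^{(n)}$ via the rescaling identity (\ref{alternativeP}), using that $t^{n\choose 2}>0$. Your additional bookkeeping for the degenerate cases $n=1$ and $t=0$ (where $q=0$ forces $P_{q,t}^{(n)}=0$ for $n\geq 2$) is a small refinement the paper's proof leaves implicit, but it does not change the argument.
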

\begin{proof} Since $P_{q,t}=\bigoplus_{n=0}^\infty P_{q,t}^{(n)}$, it suffices to consider the positivity of $P_{q,t}^{(n)}$. Since $t^{n\choose 2} >0$, the positivity of $P_{q,t}^{(n)}$ for $|q|\leq t$ follows from (\ref{alternativeP}) by the positivity (\cite{Bozejko1991}) of $P_{q/t}^{(n)}$ for $|q/t|\leq 1$ and the strict positivity of of $P_{q,t}^{(n)}$ for $|q|\leq t$ follows from the strict positivity (\cite{Bozejko1991}) of $P_{q/t}^{(n)}$ for $|q/t|< 1$.
\end{proof}

\noindent Since $\langle\,,\,\rangle_{q,t}$ is an inner product on $\F$, the completion of $\F$ yields the desired $(q,t)$-Fock space.

\begin{defn} The $(q,t)$-Fock space $\mathscr F_{q,t}$ is the completion of $\F$ with respect to $\langle\,,\,\rangle_{q,t}$.
\end{defn}

\begin{remark} For $\text{dim}(\H)\geq 2$, the conditions of Lemma~\ref{lempositivity} are also necessary. In particular, let $e_1,e_2$ be two unit vectors in $\H$ with $\langle e_1,e_2\rangle_{q,t}=0$. Note that
$$\|e_1\otimes e_2+e_2\otimes e_1\|_{q,t}=2t+2q,\quad\quad\text{and}\quad\quad \| e_1\otimes e_2-e_2\otimes e_1\|_{q,t}=2t-2q.$$
Thus, $\langle\,,\,\rangle_{q,t}$ is positive (resp. strictly positive) only if $|q|\leq t$ (resp. $|q|< t$).
\end{remark}

Analogously the case of the $q$-Fock space of \cite{Bozejko1991}, the operators $a(f)$ on $\F_{q,t}$ are bounded for $0\leq -q \leq t\leq 1$ and $0<q<t\leq 1$. Letting $q=1$ (and therefore $t=1$) recovers the Bosonic Fock space, in which case $a(f)^\ast$ and $a(f)$ are unbounded and defined only on the dense domain $\F$. 

\begin{lem}
For any $f\in\H$, the operator $a(f)$ on $\F$ is bounded for $0\leq |q|<t\leq 1$, with norm given by
$$\displaystyle\|a(f)\|=\left\{\begin{array}{ll}\|f\|_\H& 0\leq -q\leq t\leq 1,\\&\\\frac{1}{\sqrt{1-q}}\,\|f\|_\H&0<q<t=1\\&\\\sqrt{\frac{t^{n_\ast}-q^{n_\ast}}{t-q}}\,\|f\|_\H&0<q<t<1\end{array}\right.,$$
for
$$n_\ast=\left\lceil\frac{\log\left(1-q\right)-\log\left(1-t\right)}{\log\left(t\right)-\log\left(q\right)}\right\rceil.$$
\label{lemmanorm}
\end{lem}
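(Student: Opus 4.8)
The plan is to compute the operator norm of $a(f)$ by reducing, as suggested in the text preceding the lemma, to known facts about the $q$-Fock space via the rescaling $P_{q,t}^{(n)} = t^{\binom n2} P_{q/t}^{(n)}$, but the cleanest route is to observe that $\|a(f)\|^2 = \|a(f)^\ast a(f)\|$ and to exploit that $a(f)^\ast a(f)$ acts diagonally in a suitable sense. Concretely, I would first normalize to $\|f\|_\H = 1$ (the general case follows by homogeneity, since both $a(f)^\ast$ and $a(f)$ scale linearly in $f$). The key structural input is the $(q,t)$-commutation relation of Lemma~\ref{comrel}: setting $g=f$ with $\|f\|_\H=1$ gives
\begin{equation*}
a(f)a(f)^\ast = q\,a(f)^\ast a(f) + t^N.
\end{equation*}
This lets me set up a recursion. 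Writing $T_n := \|a(f)\|^2$ restricted to the $n$-particle space, or more precisely tracking how $a(f)^\ast a(f)$ acts, I expect the relevant eigenvalue structure to be governed by $[n]_{q,t} = (t^n-q^n)/(t-q)$, since this is exactly the quantity appearing in the three-term recurrence and the continued fraction of Definition/Theorem~3.

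First I would make precise the claim that $\|a(f)\| = \sup_n \sqrt{\lambda_n}$ where $\lambda_n$ is the largest value by which $a(f)^\ast a(f)$ scales on the image of the $n$-particle sector. Using the commutation relation iteratively on the vacuum, one finds $a(f)a(f)^\ast$ acting on an $n$-particle vector produces $t^n$ plus $q$ times the $(n-1)$-level contribution; unwinding this telescoping sum yields precisely $[n+1]_{q,t} = t^n + qt^{n-1} + \cdots + q^n$ as the controlling quantity. Thus the supremum of the spectrum is $\sup_{n\geq 1}[n]_{q,t}$, and the norm is the square root of this supremum. The entire problem therefore reduces to the elementary optimization
\begin{equation*}
\|a(f)\|^2 = \sup_{n\geq 1}\,[n]_{q,t} = \sup_{n\geq 1}\frac{t^n-q^n}{t-q}.
\end{equation*}

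The remaining work is purely analytic: I analyze the function $n\mapsto [n]_{q,t}$ as $n$ ranges over positive integers, in the three regimes. When $-t \leq q \leq 0$, the alternating-sign structure makes $[n]_{q,t}$ maximized at $n=1$, giving $[1]_{q,t}=1$ and hence norm $1=\|f\|_\H$. When $0<q<t=1$, we have $[n]_{q,1} = (1-q^n)/(1-q)$, which is strictly increasing and converges to $1/(1-q)$, giving the supremum $1/(1-q)$ (attained only in the limit, consistent with the stated norm $1/\sqrt{1-q}$). When $0<q<t<1$, both $t^n$ and $q^n$ decay, so $[n]_{q,t}$ is unimodal: I would compute the discrete difference $[n+1]_{q,t}-[n]_{q,t} = (t^{n+1}-q^{n+1})/(t-q)\cdot(\text{sign})$ — more directly, $[n+1]_{q,t}-[n]_{q,t}$ has the sign of $t^n - q^n$ minus a correction, and locating where this difference changes sign yields the maximizing index $n_\ast$. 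Solving the inequality $t^{n}(t-1) \geq q^{n}(q-1)$ for the turning point gives $n_\ast = \lceil (\log(1-q)-\log(1-t))/(\log t - \log q)\rceil$, matching the statement.

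\emph{The main obstacle} I anticipate is the first step — rigorously justifying that the operator norm equals the supremum of the diagonal quantities $[n]_{q,t}$ — rather than the optimization, which is routine calculus. The subtlety is that $a(f)^\ast a(f)$ is not literally diagonal on the tensor powers (the inner product $\langle\,,\,\rangle_{q,t}$ is non-trivial), so I must argue that the relevant norm is still controlled sector-by-sector. The clean way to handle this is to invoke the rescaling~(\ref{alternativeP}) together with Bo\.zejko--Speicher's known norm bound for the $q$-Fock space: their creation operator has norm $1/\sqrt{1-q/t}$ worth of contribution per sector, and the $t^{\binom n2}$ factors reassemble into exactly the $(q,t)$-weights. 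Alternatively, and perhaps more transparently, I would restrict attention to the single-generator subspace spanned by $\{f^{\otimes n}\}_{n\geq 0}$, on which $a(f)$ acts as a weighted shift with weights $\sqrt{[n]_{q,t}}$; for a weighted shift the norm is exactly the supremum of the weight magnitudes, and a standard argument shows the full-space norm is achieved already on this invariant subspace. This reduces the functional-analytic content to the classical weighted-shift norm formula, after which the three-case optimization completes the proof.
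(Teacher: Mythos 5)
Your overall skeleton --- reduce to $\|a(f)\|^2=\sup_{n\geq 1}[n]_{q,t}\,\|f\|_\H^2$ and then solve an elementary optimization --- matches the paper, and your three-case analysis of $n\mapsto[n]_{q,t}$ (maximum at $n=1$ for $q\leq 0$, monotone limit $1/(1-q)$ for $t=1$, unimodal with maximizer $n_\ast$ for $0<q<t<1$) is correct. The gap is exactly the step you flag as the main obstacle: the upper bound $\|a(f)\|^2\leq\sup_n[n]_{q,t}\,\|f\|_\H^2$. The weighted-shift observation gives only the lower bound, since $\mathrm{span}\{f^{\otimes n}\}$ is invariant and the shift weights are $\sqrt{[n]_{q,t}}\,\|f\|_\H$; there is no ``standard argument'' that the full-space norm is attained on this subspace --- that assertion is equivalent to the upper bound you are trying to prove. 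In fact the sector-by-sector identification behind it fails outright for $q<0$: taking $q=-t$, $\|f\|_\H=1$, and a unit vector $h\perp f$, one computes $\|a(f)^\ast h\|_{q,t}^2=t>0=[2]_{q,t}$, so the norm of $a(f)^\ast$ restricted to the one-particle space strictly exceeds the corresponding shift weight. The paper handles $q\leq 0$ by a separate global argument: by Lemma~\ref{comrel}, $\|a(f)^\ast\xi_n\|_{q,t}^2=t^n\|f\|_\H^2\|\xi_n\|_{q,t}^2+q\|a(f)\xi_n\|_{q,t}^2\leq\|f\|_\H^2\|\xi_n\|_{q,t}^2$, the discarded term having a favorable sign precisely because $q\leq 0$.

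Your route (a) is the paper's actual proof for $0<q<t$, but as stated it invokes the wrong input: plugging the \emph{global} Bo\.zejko--Speicher norm $1/\sqrt{1-q/t}$ into the rescaling (\ref{alternativeP}) gives, for $\xi$ in the $n$-particle space,
\begin{equation*}
\|a(f)^\ast\xi\|^2_{q,t}\;\leq\; t^{\binom{n+1}{2}}\,t^{-\binom{n}{2}}\,\frac{1}{1-q/t}\,\|f\|_\H^2\|\xi\|^2_{q,t}\;=\;\frac{t^{n+1}}{t-q}\,\|f\|_\H^2\|\xi\|^2_{q,t},
\end{equation*}
whose supremum over $n\geq 0$ is $\frac{t}{t-q}\|f\|_\H^2$ --- strictly larger than $\sup_n[n]_{q,t}\|f\|_\H^2$ in general (e.g.\ $q=1/2$, $t=9/10$ gives $2.25$ versus approximately $1.51$), so it does not yield the stated norm. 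What is needed is the \emph{level-dependent} bound, Lemma 4 of \cite{Bozejko1991}, namely $\|a_{q/t}(f)^\ast\xi\|^2_{q/t}\leq\frac{1-(q/t)^{n+1}}{1-q/t}\|f\|^2_\H\|\xi\|^2_{q/t}$, combined with the norm conversion $\|\xi\|^2_{q/t}=t^{-\binom{n}{2}}\|\xi\|^2_{q,t}$; this produces exactly the sector bound $\frac{t^{n+1}-q^{n+1}}{t-q}\|f\|^2_\H\|\xi\|^2_{q,t}$, which is saturated at $\xi=f^{\otimes n}$, after which your optimization (identical to the paper's ratio argument yielding $n_\ast$) finishes the proof. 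In short, the proposal assembles the right ingredients, but the decisive inequality is asserted rather than proved: of the two justifications offered, one does not exist and the other is quantitatively insufficient as stated.
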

\begin{proof}
Let $q\in [-1,0]$ and $|q|\leq t\leq 1$. For $\xi_n\in\H^{\otimes n}$, by Lemma~\ref{comrel},
\begin{eqnarray*}\langle a(f)^\ast \xi_n,a(f)^\ast \xi_n\rangle_{q,t}&=&\langle\xi_n,a(f)a(f)^\ast \xi_n\rangle_{q,t}=t^{n}\,\langle f,f\rangle_\H\,\langle \xi_n,\xi_n\rangle_{q,t}+q\langle \xi_n,a(f)^\ast a(f)\xi_n\rangle_{q,t}\\
&=& t^{n}\,\|f\|_\H^2\,\|\xi_n\|^2_{q,t}+q\|a(f)\xi_n\|^2_{q,t}\leq \|f\|_\H^2\,\|\xi_n\|^2_{q,t},\end{eqnarray*}
as $q\leq 0$ and $0<t\leq 1$.  Next, for an element $\xi=\sum_{i=0}^n \alpha_i\,\xi_i\in\mathscr F$, where $\alpha_i\in\mathbb C$ and $\xi_i\in\H^{\otimes i}$, that $a(f)^\ast$ is linear and $\langle\xi_n,\xi_m\rangle_{q,t}=0$ whenever $n\neq m$ implies that
$$\langle a(f)^\ast \xi,a(f)^\ast \xi\rangle_{q,t}=\sum_{i=0}^n|\alpha_i|^2\langle a(f)^\ast \xi_i,a(f)^\ast \xi_i\rangle_{q,t}\leq \sum_{i=0}^n|\alpha_i|^2 \|f\|_\H^2\,\|\xi_i\|^2_{q,t}=\|f\|_\H^2\|\xi\|^2_{q,t}.$$
Finally, since $\langle a(f)^\ast \Omega,a(f)^\ast \Omega\rangle_{q,t}=\|f\|_\H^2$, it follows that $\|a(f)\|=\|a(f)^\ast \|=\|f\|_\H$.

For $q\in (0,1)$, analogously to the previous case, it suffices to focus on $\xi\in\H^{\otimes n}$. Again, $\langle a(f)^\ast \Omega,a(f)^\ast \Omega\rangle_{q,t}=\|f\|_\H^2$ and, for $n\in\mathbb N$, (\ref{alternativeP}) yields
\begin{eqnarray*}
\langle a(f)^\ast \xi,a(f)^\ast \xi\rangle_{q,t}&=&\langle a(f)^\ast \xi,P_{q,t}^{(n+1)}a(f)^\ast \xi\rangle_0=t^{n+1\choose 2}\langle a(f)^\ast \xi,P_{q/t}^{(n+1)}a(f)^\ast \xi\rangle_0\\&=&t^{n+1\choose 2}\langle a(f)^\ast \xi,a(f)^\ast \xi\rangle_{q/t}.
\end{eqnarray*}
Recalling that the creation operators on the $(q,t)$-Fock and $\mu$-Fock spaces (for $\mu\in[-1,1]$) are defined by the linear extension of the same operator on the dense linear subspace $\mathscr F$, it follows that
$$\langle a(f)^\ast \xi,a(f)^\ast \xi\rangle_{q/t}=\langle a_{q/t}(f)^\ast\xi,a_{q/t}(f)^\ast\xi\rangle_{q/t},$$
where $a_{q/t}^\ast(f)$ analogously denotes the creation operator on the $(q/t)$-Fock space. Recalling, furthermore, the familiar bounds on $\|a_q^\ast(f)\xi\|_{q}$ (e.g. Lemma 4 in \cite{Bozejko1991}),
$$\langle a(f)^\ast \xi,a(f)^\ast \xi\rangle_{q,t}\leq t^{n+1\choose 2}\frac{1-(q/t)^{n+1}}{1-q/t}\|f\|_{\H}^2\|\xi\|_{q/t}^2=t^{n+1\choose 2}\frac{1-(q/t)^{n+1}}{1-q/t}\|f\|_{\H}^2\|\xi\|_{q/t}^2.$$
But,
$$\|\xi\|_{q/t}^2=\langle \xi,P_{q/t}^{(n)}\xi\rangle_0=t^{-{n\choose 2}}\langle \xi,P_{q,t}^{(n)}\xi\rangle_0=t^{-{n\choose 2}}\|\xi\|_{q,t}^2,$$
and so,
$$\langle a(f)^\ast \xi,a(f)^\ast \xi\rangle_{q,t}\leq t^{n}\frac{1-(q/t)^{n+1}}{1-q/t}\|f\|_{\H}^2\|\xi\|_{q,t}^2=\frac{t^{n+1}-q^{n+1}}{t-q}\|f\|_{\H}^2\|\xi\|_{q,t}^2.$$
Since $0<t\leq 1$, it follows that $a(f)^\ast $ is bounded. To recover the corresponding expression for the norm, let $\xi=f\otimes \ldots\otimes f=f^{\otimes n}\in\H^{\otimes n}$ and therefore $a(f)^\ast \xi=f^{\otimes n+1}$. Thus,
\begin{eqnarray*}\langle a(f)^\ast \xi,a(f)^\ast \xi\rangle_{q,t}=\langle f^{\otimes n+1},f^{\otimes n+1}\rangle_{q,t}=\sum_{k=1}^{n+1}q^{k-1}t^{n+1-k}\|f\|_\H^2\|f^{\otimes n}\|_{q,t}^2=\frac{t^{n+1}-q^{n+1}}{t-q}\|f\|_\H^2\|\xi\|_{q,t}^2\end{eqnarray*}
For $t=1$,
$$\sup_{n\in\mathbb N}\, \frac{\langle a(f)^\ast f^{\otimes n},a(f)^\ast f^{\otimes n}\rangle_{q,t}}{\|f^{\otimes n}\|_{q,t}^2}=\frac{1}{1-q}\|f\|_\H^2.$$
Otherwise, it remains to compute $n\in\mathbb N$ that maximizes $t^{n+1}-q^{n+1}$. Let
$$r_n:=\frac{t^{n-1}-t^{n}}{q^{n-1}-q^{n}},\quad\quad n\in\mathbb N\cup\{0\},$$
and note that, since $0<q<t$, both the numerator and the denominator are strictly positive. Furthermore, $\{r_n\}_{n\geq 0}$ forms a strictly increasing sequence as 
$$r_n=\frac{t^n}{q^n}\frac{\left(1-t\right)}{\left(1-q\right)}=\frac{t}{q}\,r_{n-1}>r_{n-1}.$$ Now, note that $r_n> 1$ iff $t^n-q^n< t^{n-1}-q^{n-1}$ and, conversely, $r_n\leq 1$ iff $t^n-q^n\geq t^{n-1}-q^{n-1}$. It follows that $t^n-q^n$ is maximized for $n=n_\ast$, where $n_\ast$ is the greatest non-negative integer for which $r_n\leq 1$. A straightforward calculation then yields
$$n_\ast=\left\lceil\frac{\log\left(1-q\right)-\log\left(1-t\right)}{\log\left(t\right)-\log\left(q\right)}\right\rceil.$$
\end{proof}

\section{The $(q,t)$-Gaussian Processes}
\label{secqtSemicircular}
The natural starting point to the probabilistic considerations of this section is the $\ast$-probability space $(\mathscr{G}_{q,t},\varphi_{q,t})$, discussed next, formed by the $\ast$-algebra generated by the creation and annihilation operators on $\F_{q,t}$ and the vacuum-state expectation on the algebra. The $\ast$-probability space $(\mathscr{G}_{q,t},\varphi_{q,t})$ provides a convenient setting in which the $(q,t)$-Gaussian family is subsequently introduced and studied.

\subsection{The $(\mathscr{G}_{q,t},\varphi_{q,t})$ $\ast$-probability space}
For a clear introduction to non-commutative probability spaces, the reader is referred to the monograph \cite{NicaSpeicher}. In the present context, the setting of interest is that of the $(\mathscr{G}_{q,t},\varphi_{q,t})$ $\ast$-probability space, formed by:
\begin{itemize}
\item the unital $\ast$-algebra $\mathscr{G}_{q,t}$ generated by $\{a(h)\mid h\in \H\}$;
\item the unital linear functional $\varphi_{q,t}:\mathscr{G}_{q,t}\to\mathbb C$, $b\mapsto \langle \Omega,b\Omega \rangle_{q,t}$ (i.e. the vacuum expectation state on $\mathscr{G}_{q,t}$.).
\end{itemize}
In this non-commutative setting, \emph{random variables} are understood to be the elements of $\mathscr{G}_{q,t}$ and of particular interest are their joint mixed moments, i.e. expressions $\varphi_{q,t}(b_1^{\epsilon(1)}\ldots b_k^{\epsilon(k)})$ for $b_1,\ldots,b_k\in \mathscr{G}_{q,t}$, $k\in\mathbb N$ and $\epsilon(1),\ldots,\epsilon(k)\in\{1,\ast\}$. 
Since $\varphi_{q,t}$ is linear, one is allowed to focus on the joint moments $\varphi_{q,t}(a(h_1)^{\epsilon(1)}\ldots a(h_k)^{\epsilon(k)})$. These turn out to be intimately connected with two combinatorial objects, discussed next.

For $n\in\mathbb N$, consider first the map $\psi_n$ from $\{1,\ast\}^{n}$ to the set of all NE/SE paths of length $n$ (cf. Section~\ref{preliminaries}), by which every ``$\ast$'' maps to a NE step and every ``$1$'' maps to a SE step. Clearly, $\psi_n$ is a bijection and of particular interest is the set $\psi_{2n}^{-1}(D_n)$, where $D_n$ denotes the set of Dyck paths of length $2n$. An example of an element in $\{1,\ast\}^{14}$ mapping into $D_7$ is shown in Figure~\ref{pathDyck}.

\begin{figure}[h]\centering
\includegraphics[scale=0.5]{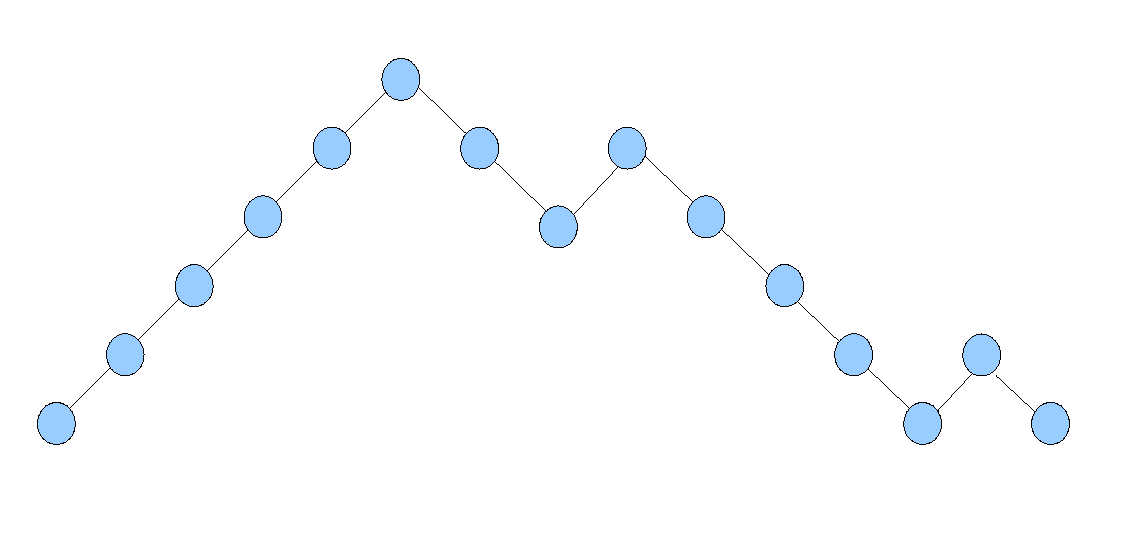}
\caption{$\psi_{14}(\ast,\ast,\ast,\ast,\ast,1,1,\ast,1,1,1,1,\ast,1)$.}\label{pathDyck}
\end{figure}

Given $(\epsilon(1),\ldots,\epsilon(2n))\in \psi^{-1}(D_n)$, let each $\ast$ encode an ``opening'' and each $1$ encode a ``closure'' and, given some such string of openings and closures, consider all the ways in which the elements of the string can be organized into disjoint pairs so that (1) an opening is always to the left of the corresponding closure and (2) no opening is left unpaired. 
Clearly, each fixed $(\epsilon(1),\ldots,\epsilon(2n))\in \psi^{-1}(D_n)$ is thus associated with a distinct set of pair-partitions of $[2n]$, that is, a subset of $\mathscr P_2(2n)$ (cf. Section~\ref{preliminaries}). Conversely, every pair partition naturally corresponds to a unique string of openings/closures belonging to $\psi^{-1}(D_n)$. Thus, writing $\mathscr V\sim_p\mathscr V'$ for any $\mathscr V,\mathscr V'\in \mathscr P_2(2n)$ for which the corresponding strings of openings and closures both encode the same string in $\psi^{-1}(D_n)$ defines an equivalence relation. Figure~\ref{pairings_example} shows the six pairings in the equivalence class of the string $(\ast,\ast,\ast,1,1,1)\in\psi^{-1}(D_6)$.

\begin{figure}[h]\centering
\includegraphics[scale=0.5]{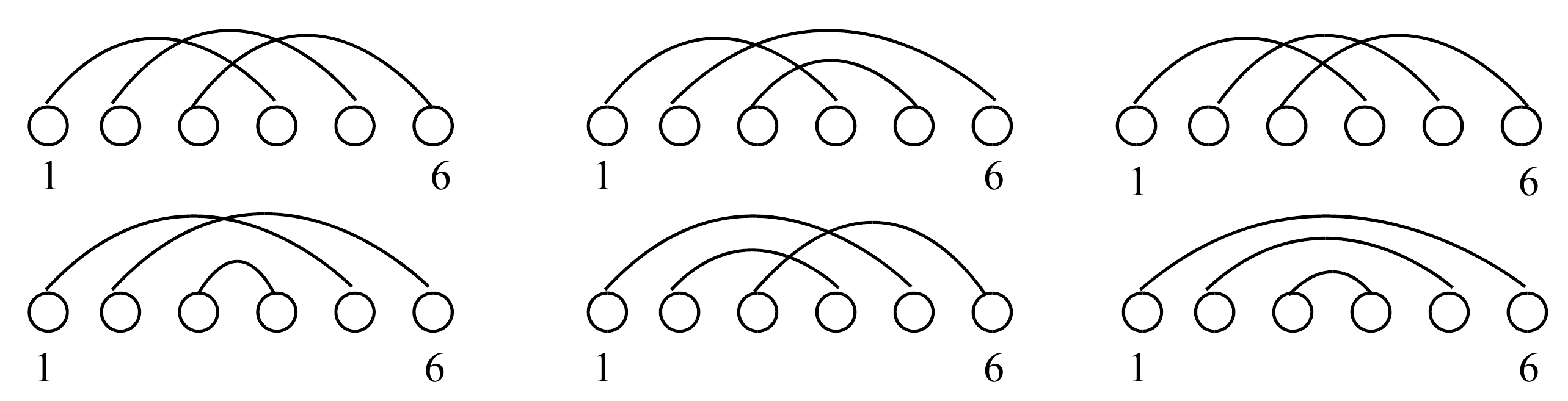}
\caption{The equivalence class of $(\ast,\ast,\ast,1,1,1)\in\psi^{-1}(D_6)$ in $P_2(6)$.}\label{pairings_example}
\end{figure}

Next, recalling that the purpose is calculating the value of the mixed moment $$\varphi_{q,t}(a(h_1)^{\epsilon(1)}\ldots a(h_{2n})^{\epsilon(2n)}),$$ fix a string $(\epsilon(1),\ldots,\epsilon(2n))\in\psi_{2n}^{-1}(D_n)$ and jointly consider some underlying choice of $h_1,\ldots,h_{2n}\in \H$. Each pairing $\mathscr V=\{(w_1,z_1),\ldots,(w_n,z_n)\}$ in the equivalence class of $(\epsilon(1),\ldots,\epsilon(2n))$ is then assigned the weight
$$\text{wt}(\mathscr V;h_1,\ldots,h_{2n}):=q^{\cross(\mathscr V)}t^{\nest(\mathscr V)}\prod_{i=1}^n\langle h_{w_i},h_{z_i}\rangle_\H,$$
where $\cross(\mathscr V)$ and $\nest(\mathscr V)$ correspond, respectively, to the numbers of crossings and nestings in $\mathscr V$, as defined in Section~\ref{preliminaries}. For example, the top left-most pairing in Figure~\ref{pairings_example} is thus given the weight $q^3\langle g_1,g_4\rangle_\H\langle g_2,g_5\rangle_\H\langle g_3,g_6\rangle_\H$. 
For $n\in\mathbb N$, let $T_{q,t}(h_1,\ldots,h_{2n};\epsilon(1),\ldots,\epsilon(2n))$ denote the generating function of the weighted pair-partitions of $[2n]$, namely
\begin{eqnarray}T_{q,t}(h_1,\ldots,h_{2n};\epsilon(1),\ldots,\epsilon(2n))&:=&\sum_{\substack{\mathscr V\in \mathscr P_{2}(2n)\\\mathscr V\sim_p (\epsilon(1),\ldots,\epsilon(2n))}}\text{wt}(\mathscr V;h_1,\ldots,h_{2n})\nonumber\\
&=& \sum_{\substack{\mathscr V\in \mathscr P_{2}(2n)\\\mathscr V\sim_p (\epsilon(1),\ldots,\epsilon(2n))}}q^{\cross(\mathscr V)}t^{\nest(\mathscr V)}\prod_{i=1}^n\langle h_{w_i},h_{z_i}\rangle_\H
\label{defT}
\end{eqnarray}
where $\mathscr V\sim_p (\epsilon(1),\ldots,\epsilon(2n))$ is meant to indicate that $\mathscr V$ is in the equivalence class of $(\epsilon(1),\ldots,\epsilon(2n))$ under $\sim_p$ (i.e. $\mathscr V$ has its opening/closure string given by $(\epsilon(1),\ldots,\epsilon(2n))\,$). Writing the weight of a pairing as a product of weights of its pairs, as in the following lemma, is the remaining ingredient in connecting the combinatorial structures at hand to the moments $\varphi_{q,t}(a(h_1)^{\epsilon(1)}\ldots a(h_{2n})^{\epsilon(2n)})$.

\begin{lem} For $\mathscr V=\{(z_1,w_1),\ldots,(z_n,w_n)\}\in\mathscr P_2(2n)$ with $w_1<w_2<\ldots<w_n$ (i.e. with pairs indexed in the increasing order of closures) and $h_1,\ldots,h_{2n}\in\H$,
$$\text{wt}(\mathscr V;h_1,\ldots,h_{2n})=\prod_{i=1}^n q^{|\{z_j\mid z_i<z_j<w_i,j>i\}|}\,\,t^{|\{z_j\mid 1\leq z_j<z_i,j>i\}|}\langle h_{z_i},h_{w_i}\rangle_\H.$$\label{lemrecursive}
\end{lem}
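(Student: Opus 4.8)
The plan is to reduce the identity to a purely combinatorial bookkeeping statement about how the global statistics $\cross(\mathscr V)$ and $\nest(\mathscr V)$ split into local, per-block contributions. Since $\H$ is a real Hilbert space, its inner product is symmetric, so $\langle h_{w_i},h_{z_i}\rangle_\H=\langle h_{z_i},h_{w_i}\rangle_\H$ and the product of inner products in the definition of $\text{wt}(\mathscr V;h_1,\ldots,h_{2n})$ agrees with the one on the claimed right-hand side. It therefore suffices to prove the two exponent identities
$$\cross(\mathscr V)=\sum_{i=1}^n|\{z_j\mid z_i<z_j<w_i,\,j>i\}|,\qquad \nest(\mathscr V)=\sum_{i=1}^n|\{z_j\mid 1\le z_j<z_i,\,j>i\}|,$$
after which the lemma follows by comparing the exponents of $q$ and of $t$ term by term.

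Adopting the convention of the lemma, in which $z_i$ denotes the opening and $w_i$ the closure of the $i$-th block and the blocks are indexed so that $w_1<\cdots<w_n$, I would next analyze an arbitrary unordered pair of distinct blocks. Reordering them so that the smaller closure carries the smaller index $i<j$ (hence $w_i<w_j$), and using that all $2n$ endpoints are distinct with $z_i<w_i$ and $z_j<w_j$, the location of $z_j$ relative to $z_i$ and $w_i$ forces exactly one of three mutually exclusive configurations: $z_i<z_j<w_i<w_j$, i.e.\ a crossing; $z_j<z_i<w_i<w_j$, i.e.\ a nesting with block $i$ the inner block; or $z_i<w_i<z_j<w_j$, i.e.\ two separated blocks. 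A short translation of Definition~\ref{defCrossNest} from its (opening, closure)$=(w,z)$ labelling with increasing openings into the present labelling confirms that these three cases are exactly ``crossing'', ``nesting'', and ``neither''.

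The key observation is that this trichotomy matches the two counting sets indexed by the smaller index $i$. The crossing case is precisely the situation $z_i<z_j<w_i$ with $j>i$, so it is recorded once in the $q$-exponent attached to block $i$; the nesting case is precisely $z_j<z_i$ with $j>i$ (and $z_j\ge 1$ automatically), so it is recorded once in the $t$-exponent attached to block $i$; the separated case contributes to neither exponent. Summing over all $\binom{n}{2}$ unordered pairs and grouping by the smaller index recovers $\sum_i|\{z_j\mid z_i<z_j<w_i,\,j>i\}|=\cross(\mathscr V)$ and $\sum_i|\{z_j\mid 1\le z_j<z_i,\,j>i\}|=\nest(\mathscr V)$, which are the two identities sought.

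The step requiring the most care — though it is not genuinely hard — is ensuring that each crossing and each nesting is counted once and only once. This rests on the fact that in both configurations the block of smaller closure is uniquely determined (for a nesting it is the inner block), so that the configuration attaches to a single index $i$ in the sums and no double-counting occurs. I would also make explicit the notational bridge between Definition~\ref{defCrossNest}, where pairs are ordered by increasing openings, and the present lemma, where they are ordered by increasing closures, since it is this reindexing that turns the crossing and nesting conditions into the forward-looking constraint $j>i$ appearing in the counting sets.
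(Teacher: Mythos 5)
Your proposal is correct. The two exponent identities you isolate are exactly what the lemma asserts, your trichotomy for an unordered pair of blocks with $w_i<w_j$ (namely $z_j<z_i$, or $z_i<z_j<w_i$, or $w_i<z_j$, giving nesting, crossing, or disjointness respectively) is exhaustive and mutually exclusive, and attributing each configuration to the smaller-closure index $i$ makes the sums count $\cross(\mathscr V)$ and $\nest(\mathscr V)$ without double-counting. The inner-product point is even easier than you make it: in both the definition of $\text{wt}$ and the lemma the factor is $\langle h_{\text{opening}},h_{\text{closure}}\rangle_\H$ (the two displays just use opposite letter conventions), so the products coincide factor by factor and symmetry of the real inner product is not actually needed, though invoking it is harmless.

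Where you differ from the paper is in packaging rather than substance. The paper proves the lemma by a sequential peeling procedure: process closures left to right, and at step $i$ count the crossings and nestings of $(z_i,w_i)$ inside the reduced diagram $\mathscr V'$ from which the already-processed pairs have been deleted; since $w_i$ is the left-most closure of $\mathscr V'$, every point between $z_i$ and $w_i$ in $\mathscr V'$ is an opening of a later pair (a crossing) and every point left of $z_i$ is an opening of a later pair (a nesting). Your argument replaces this dynamic procedure with a static double count over the $\binom{n}{2}$ unordered pairs of blocks, grouped by the smaller closure index. Both rest on the same attribution principle, but your version is arguably the cleaner self-contained verification of the identity, while the paper's sequential formulation is not gratuitous: it is precisely this step-by-step procedure that is reused in the proof of Lemma~\ref{momentsC}, where removing completed pairs mirrors the recursive action of the annihilation operators via (\ref{cdef2}). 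So if you adopted your proof in the paper, the later moment computation would need a small bridge re-expressing your pairwise count in sequential form.
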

\begin{proof} Consider the following procedure for assigning weights to a pairing. Starting with the left-most closure $w_1$, suppose that it connects to some opening $z_1$ (where $z_1<w_1$). Since $w_1$ is indeed the left-most closure, exactly $w_1-z_1$ pairs will cross the pair $(z_1,w_1)$ and exactly $z_1-1$ pairs will nest with it. Thus, assign the pair $(z_1,w_1)$ the weight $q^{w_i-z_i}t^{z_i-1} \langle h_{z_i},h_{w_i}\rangle_\H$. Consider now the $i^\text{th}$ closure from the left, $w_i$, and let $\mathscr V'$ be the pairing from which all pairs $(z_k,w_k)$ for $k<i$ have been removed. Then, assigning the pair $(z_i,w_i)$ its weight in $\mathscr V'$ according to the previous recipe does not take into acount any crossings or nestings that have already been accounted for by the previous pairs. It now suffices to note that, by the end of the procedure, all the crossings and nestings have been taken into account and that the product of the weights of the pairs indeed equals the expression for $\text{wt}(\mathscr V;h_1,\ldots,h_{2n})$ in (\ref{defT}).
\end{proof}

The sequential procedure for assigning weights to pairings is illustrated in Figure~\ref{chordsDyck}. We are now ready to calculate the joint mixed moments of interest.

\begin{figure}\centering
\includegraphics[scale=0.5]{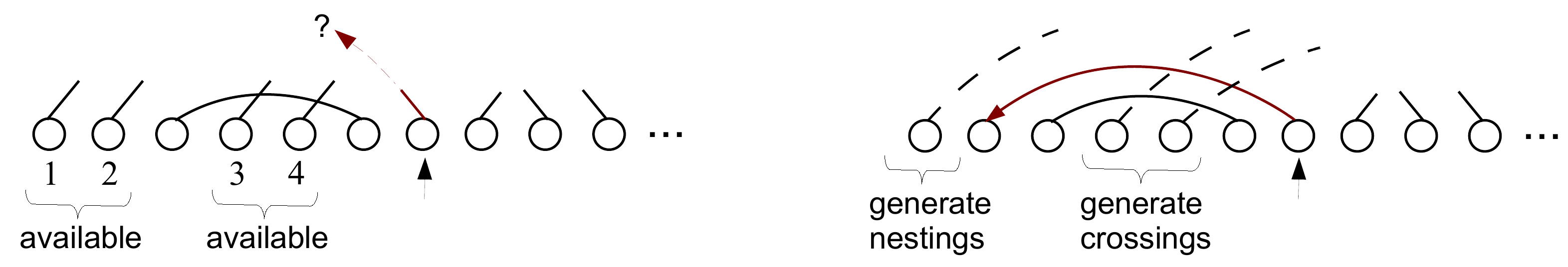}
\caption{The sequential pairing of openings and closures corresponding to the Dyck path of Figure~\ref{pathDyck}, with an arrow denoting the currently considered closure. In the left figure, there are four available (i.e. as of yet unpaired) openings. In the right figure, the current closure is paired to the indicated opening, thus incurring two crossings and one nesting. Note that the crossings and nestings incurred by the current closure never include any crossings or nestings already counted in the previous closure.}
\label{chordsDyck}
\end{figure}

\begin{lem} For all $n\in\mathbb N$ and $\epsilon(1),\ldots,\epsilon(2n)\in \{1,\ast\}^{2n}$,
\begin{flushleft} $\displaystyle\varphi_{q,t}(a(h_1)^{\epsilon(1)}\ldots a(h_{2n-1})^{\epsilon(2n-1)})=0$\\\end{flushleft}
\begin{align*}\displaystyle&\varphi_{q,t}(a(h_1)^{\epsilon(1)}\ldots a(h_{2n})^{\epsilon(2n)})=\left\{\begin{array}{ll}0,&(\epsilon(2n),\ldots,\epsilon(1))\not\in \psi_{2n}^{-1}(D_n)\\
T_{q,t}(h_{2n},\ldots,h_{1};\epsilon(2n),\ldots,\epsilon(1)),&(\epsilon(2n),\ldots,\epsilon(1))\in \psi_{2n}^{-1}(D_n)\end{array}\right.\\
&=\sum_{\mathscr V\in \mathscr P_{2}(2n)}\varphi_{q,t}(a(h_{w_1})^{\epsilon(w_1)}a(h_{z_1})^{\epsilon(z_1)})\ldots\varphi_{q,t}(a(h_{w_n})^{\epsilon(w_n)}a(h_{z_n})^{\epsilon(z_n)})q^{\cross(\mathscr V)}t^{\nest(\mathscr V)},\end{align*}
where each $\mathscr V$ is (uniquely) written as a collection of pairs $\{(w_1,z_1),\ldots,(w_n,z_n)\}$ with $w_1<\ldots<w_n$ and $w_i<z_i$.
\label{momentsC}
\end{lem}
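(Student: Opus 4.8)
The plan is to evaluate $\varphi_{q,t}(a(h_1)^{\epsilon(1)}\cdots a(h_{2n})^{\epsilon(2n)})=\langle\Omega,\,a(h_1)^{\epsilon(1)}\cdots a(h_{2n})^{\epsilon(2n)}\Omega\rangle_{q,t}$ by letting the word act on $\Omega$ from the right, one operator at a time. Since $P_{q,t}\Omega=\Omega$, Lemma~\ref{lemP} gives $\langle\Omega,\xi\rangle_{q,t}=\langle\xi,\Omega\rangle_0$, so the moment is exactly the coefficient of $\Omega$ in $a(h_1)^{\epsilon(1)}\cdots a(h_{2n})^{\epsilon(2n)}\Omega$. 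By the definitions of the creation and annihilation operators, each creation prepends a tensor leg while each annihilation lowers the number of legs by one; hence after an odd number of steps the result lies in an odd-particle subspace, orthogonal to $\Omega$, which disposes of the odd moments. For the even case, reading the word right to left identifies every $\ast$ with a creation and every $1$ with an annihilation, i.e.\ with an NE-step and an SE-step of the path $\psi_{2n}(\epsilon(2n),\ldots,\epsilon(1))$. If this path ever dips below zero we are applying an annihilation to $\Omega$ (giving $0$), and if it fails to return to $0$ the final vector has no $\Omega$-component; thus the moment vanishes unless $(\epsilon(2n),\ldots,\epsilon(1))\in\psi_{2n}^{-1}(D_n)$.

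Assuming the reversed string is a Dyck path, I would then expand every annihilation via $a(f)\,g_1\otimes\cdots\otimes g_m=\sum_{k}q^{k-1}t^{m-k}\langle f,g_k\rangle\,g_1\otimes\cdots\breve g_k\cdots\otimes g_m$ and collect the coefficient of $\Omega$. A term survives only when every leg is eventually contracted, so the surviving contributions are indexed by the ways of matching each annihilation to the still-open creation it contracts with, equivalently by the pair partitions $\mathscr V$ in the $\sim_p$-class of $(\epsilon(2n),\ldots,\epsilon(1))$. It then remains to check that the product of the factors $q^{k-1}t^{m-k}\langle f,g_k\rangle$ picked up along the way equals $\text{wt}(\mathscr V;h_{2n},\ldots,h_1)$ from (\ref{defT}). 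Processing the closures in increasing order (the most recently created leg sitting in front), when the $i$-th closure contracts the leg in position $k$ among $m$ open legs, the $k-1$ legs ahead of it are precisely the openings of not-yet-closed pairs that cross the current pair, and the $m-k$ legs behind it are precisely the not-yet-closed pairs that nest it. This is exactly the per-pair decomposition of Lemma~\ref{lemrecursive}, so the accumulated weight is $q^{\cross(\mathscr V)}t^{\nest(\mathscr V)}\prod_i\langle\cdot,\cdot\rangle_\H$, and summing over $\mathscr V$ yields $T_{q,t}(h_{2n},\ldots,h_1;\epsilon(2n),\ldots,\epsilon(1))$, i.e.\ the middle expression.

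Finally I would pass to the advertised product of two-point functions. A direct computation (the $n=1$ instance of the identity just proved) gives $\varphi_{q,t}(a(h_{w})^{\epsilon(w)}a(h_{z})^{\epsilon(z)})=\langle h_{w},h_{z}\rangle_\H$ when $\epsilon(z)=\ast$ and $\epsilon(w)=1$, and $0$ otherwise. Hence in $\sum_{\mathscr V\in\mathscr P_2(2n)}\big(\prod_i\varphi_{q,t}(\cdots)\big)q^{\cross(\mathscr V)}t^{\nest(\mathscr V)}$ only the pairings compatible with the $\ast/1$-pattern of the word survive, each contributing $q^{\cross(\mathscr V)}t^{\nest(\mathscr V)}\prod_i\langle h_{w_i},h_{z_i}\rangle_\H$. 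Under the order-reversing relabelling $p\mapsto 2n+1-p$ these compatible pairings are exactly the $\sim_p$-class appearing in $T_{q,t}$; since reflecting a chord diagram preserves both crossings and nestings, and $\langle\cdot,\cdot\rangle_\H$ is symmetric on the real space $\H$, the two sums agree termwise in the Dyck case and the compatible class is empty (so both sides are $0$) otherwise. This reproduces the middle expression and completes the identification.

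The crux, and the step I expect to require the most care, is the weight-matching in the second paragraph: one must verify that the purely positional factors $q^{k-1}t^{m-k}$ emitted by the annihilation operator reorganize into the global statistics $q^{\cross(\mathscr V)}t^{\nest(\mathscr V)}$. Lemma~\ref{lemrecursive} supplies precisely this rearrangement, so the real content is the bijective claim that the front/back position of the contracted leg records exactly the crossings/nestings with the pairs still open at that stage. The book-keeping forced by the right-to-left action, and the attendant reversal of arguments in $T_{q,t}$, is routine but must be tracked consistently throughout.
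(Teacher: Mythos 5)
Your proof is correct and follows essentially the same route as the paper's: the right-to-left expansion of the word acting on $\Omega$ with the Dyck-path vanishing criterion, the identification of the emitted coefficients $q^{k-1}t^{m-k}$ with the sequential per-pair weights of Lemma~\ref{lemrecursive} to produce $T_{q,t}$, and the reduction of the Wick-type sum via the vanishing of incompatible two-point functions. If anything, your tracking of the order-reversing relabeling $p\mapsto 2n+1-p$ and the crossing/nesting bookkeeping is more explicit than the paper's own argument.
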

\begin{proof}
Given the mixed moment $\varphi_{q,t}(a(h_1)^{\epsilon(1)}\ldots a(h_{k})^{\epsilon(k)})$, consider
the reverse string $\epsilon(k),\epsilon(k-1),\ldots,\epsilon(1)$ and the corresponding NE/SE path $\psi_k(\epsilon(k),\epsilon(k-1),\ldots,\epsilon(1))$. 
Since $a(h)\Omega=0$ for all $h\in\H$, by recursively expanding the mixed moment $\varphi_{q,t}(a(h_1)^{\epsilon(1)}\ldots a(h_{k})^{\epsilon(k)})$ via (\ref{cdef1}) and (\ref{cdef2}),
it immediately follows that the moment is zero if for any $1\leq i\leq k$, the number of SE steps in $\psi_i(\epsilon(k),\epsilon(k-1),\ldots,\epsilon(i))$ exceeds the corresponding number of NE steps. Moreover, since for all $h_i,g_i\in\H$, $\langle h_1\otimes\ldots\otimes h_n,g_1\otimes\ldots\otimes g_m\rangle_{q,t}=0$ whenever $n\neq m$, it follows that the moment vanishes unless the total number of NE steps in $\psi_k(\epsilon(k),\ldots,\epsilon(1))$ equals the corresponding number of SE steps. This shows that the moment $\varphi_{q,t}(a(h_1)^{\epsilon(1)}\ldots a(h_{k})^{\epsilon(k)})$ vanishes if $\psi_k(\epsilon(k),\ldots,\epsilon(1))$ is not a Dyck path. In particular, the mixed moment vanishes if $k$ is odd.

Now let $k$ be even with $\psi(\epsilon(k),\ldots,\epsilon(1))\in D_n$.
For some $m\geq 1$, let $\epsilon(m+1)$ correspond to the first $\ast$ from the left in the reverse string $(\epsilon(k),\ldots,\epsilon(1))$ (that is, corresponding to the right-most creation operator in $a(h_1)^{\epsilon(1)}\ldots a(h_{k})^{\epsilon(k)}$). Then, by (\ref{cdef1}), $a(h_{m+1})$ acts on $h_{m}\otimes\ldots\otimes h_1$ to produce a weighted sum of $(m-1)$-dimensional products, that is,
$$a(h_{m+1})h_{m}\otimes\ldots\otimes h_1=\sum_{i=1}^mq^{m-i}t^{i-1}\langle h_{m+1},h_i\rangle_\H h_{m}\otimes\ldots\breve h_i\otimes\ldots\otimes h_1.$$
At the same time, diagramatically, $\epsilon(m+1)$ corresponds to the the first closure from the left in $\psi(\epsilon(k),\ldots,\epsilon(1))$. Supposing that this closure pairs to the $i^\text{th}$ opening (from the left), for $1\leq i\leq m$, the weight of the resulting pair in the sense of Lemma~\ref{lemrecursive} is then given by $q^{m-i}t^{i-1}\langle h_{m+1},h_i\rangle_\H$. Furthermore, the act of removing $h_i$ from the product $h_{m}\otimes\ldots\otimes h_1$ diagramatically corresponds to removing the previously completed pairs in the procedure of Lemma~\ref{lemrecursive} and, in both cases, the same iteration is subsequently repeated on the thus reduced object. Now, by definition, summing the weights $\text{wt}(\mathscr V;h_k,\ldots,h_1)$ over all pairings $\mathscr V\sim_p(\epsilon(k),\ldots,\epsilon(1))$ is equivalent to summing the products of the weights of the individual pairs over all the possible ways of matching all closures to openings (and thus, in the above notation, over all choices of $i$ and analogous choices made on the subsequent iterations). Thus, 
$$\sum_{\substack{\mathscr V\in \mathscr P_{2}(2n)\\\mathscr V\sim_p (\epsilon(k),\ldots,\epsilon(1))}}\text{wt}(\mathscr V;h_k,\ldots,h_{1})$$
is exactly the sum of weights obtained by unfolding the expression $\varphi_{q,t}(a(h_1)^{\epsilon(1)}\ldots a(h_{k})^{\epsilon(k)})$ via the recursive definitions (\ref{cdef1}) and (\ref{cdef2}).
In other words, we have shown that 
$$\varphi_{q,t}(a(h_1)^{\epsilon(1)}\ldots a(h_{2n})^{\epsilon(2n)})=T_{q,t}(h_{2n},\ldots,h_{1};\epsilon(2n),\ldots,\epsilon(1))$$
whenever $(\epsilon(2n),\ldots,\epsilon(1))\in \psi_{2n}^{-1}(D_n)$.

Finally, that 
$\varphi_{q,t}(a(h_1)^{\epsilon(1)}\ldots a(h_{2n})^{\epsilon(2n)})$ also equals
$$\sum_{\mathscr V\in \mathscr P_{2}(2n)}\varphi_{q,t}(a(h_{w_1})^{\epsilon(w_1)}a(h_{z_1})^{\epsilon(z_1)})\ldots\varphi_{q,t}(a(h_{w_n})^{\epsilon(w_n)}a(h_{z_n})^{\epsilon(z_n)})\,q^{\cross(\mathscr V)}t^{\nest(\mathscr V)}$$
follows immediately from the fact that $\varphi_{q,t}(a(h_{w_j})^{\epsilon(w_j)}a(h_{z_j})^{\epsilon(z_j)})=0$ unless $\epsilon(w_j)=1$ and $\epsilon(z_j)=\ast$; in other words, unless $\mathscr V \sim_p (\epsilon(2n),\ldots,\epsilon(1))$.

\end{proof}

In particular, the mixed moments of the single element $a(h)$ are of a particularly insightful form.
\begin{lem} Given a Dyck path $\psi_n(\epsilon(2n),\ldots,\epsilon(1))$ and $h\in\H$, let $\widetilde{\text{wt}}(\epsilon(2n),\ldots,\epsilon(1);h)$ denote the weight of the path taken as the product of the weights of the individual steps, with each NE step assigned unit weight and each SE step falling from height $m$ to height $m-1$ assigned weight
\begin{equation}[m]_{q,t}:=\sum_{i=1}^{m} q^{m-i}t^{i-1}=\frac{t^m-q^m}{t-q}.\end{equation}
(See the illustration of Figure~\ref{labeledDyck}.) Then, $\displaystyle \varphi_{q,t}(a(h)^{\epsilon(1)}\ldots a(h)^{\epsilon(2n)})=\widetilde{\text{wt}}(\epsilon(2n),\ldots,\epsilon(1);h).$\label{lemproduct}
\end{lem}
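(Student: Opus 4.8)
The plan is to derive this single-variable formula directly from the general mixed-moment computation of Lemma~\ref{momentsC} together with the sequential weighting of Lemma~\ref{lemrecursive}. First I would specialize $h_1=\ldots=h_{2n}=h$ in Lemma~\ref{momentsC}. When the reversed string $(\epsilon(2n),\ldots,\epsilon(1))$ fails to lie in $\psi_{2n}^{-1}(D_n)$, both sides vanish (the left by Lemma~\ref{momentsC}, the right because the path weight is defined only on Dyck paths), so it suffices to treat $\psi_{2n}(\epsilon(2n),\ldots,\epsilon(1))\in D_n$. In that case Lemma~\ref{momentsC} yields
$$\varphi_{q,t}(a(h)^{\epsilon(1)}\ldots a(h)^{\epsilon(2n)})=\|h\|_\H^{2n}\sum_{\substack{\mathscr V\in\mathscr P_2(2n)\\ \mathscr V\sim_p(\epsilon(2n),\ldots,\epsilon(1))}} q^{\cross(\mathscr V)}t^{\nest(\mathscr V)},$$
since every inner product collapses to $\langle h,h\rangle_\H=\|h\|_\H^2$. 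Taking $\|h\|_\H=1$ (or absorbing the factor into the descending steps) reduces the claim to the purely combinatorial identity that the crossing--nesting generating function over a fixed $\sim_p$-equivalence class equals the path weight $\widetilde{\text{wt}}$.

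Next I would invoke the sequential pairing procedure of Lemma~\ref{lemrecursive}, processing closures from left to right. The key point, already visible in the proof of Lemma~\ref{momentsC}, is that when a closure is matched among the $m$ currently available (as yet unpaired) openings, pairing it to the $i$-th such opening from the left contributes exactly $q^{m-i}t^{i-1}$, and---crucially---this factor counts only the crossings and nestings not already recorded by earlier closures. Hence the weight of any pairing in the class factors as a product with one factor per closure, and the choices made at distinct closures are independent. This independence lets me interchange the sum over all pairings in the equivalence class with the product over closures, so that each closure contributes the sum $\sum_{i=1}^m q^{m-i}t^{i-1}=[m]_{q,t}$ over its available openings.

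Finally I would identify the bookkeeping quantity $m$ with the path height. Reading the reversed string as a Dyck path, each closure is a SE step and each opening is a NE step of unit weight; the number of unpaired openings available to a given closure is precisely the height $m$ of the path immediately before that descending step. Collecting the per-closure factors $[m]_{q,t}$ then reproduces exactly $\widetilde{\text{wt}}(\epsilon(2n),\ldots,\epsilon(1);h)$, completing the argument.

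The main obstacle is the interchange of sum and product in the second step: one must argue cleanly that, under the left-to-right processing, the crossing/nesting contributions of the closures are genuinely independent and non-overlapping, so that the generating function over the whole equivalence class factorizes. This is precisely what the recursive weight assignment of Lemma~\ref{lemrecursive} secures, but making the ``available openings $=$ current height'' correspondence precise---and confirming that no crossing or nesting is double-counted across closures---is where the care is required.
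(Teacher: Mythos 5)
Your proposal is correct and follows essentially the same route as the paper's own proof: reduce to the sum of weights over the $\sim_p$-equivalence class (Lemma~\ref{momentsC}), use the sequential closure-by-closure procedure of Lemma~\ref{lemrecursive} to factor that sum into per-closure generating functions $[m]_{q,t}$, and identify $m$ with the height of the Dyck path before each SE step. The only cosmetic difference is that you explicitly dispose of the non-Dyck case, which the lemma's hypothesis already excludes.
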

\begin{proof}
Returning to the sequential procedure in the proof of Lemma~\ref{lemrecursive}, note that when $h_1=\ldots=h_{2n}=h$, the choice of an opening for any given closure only affects the weight of the present pair and does not affect that of the subsequently considered pairs. In particular, for any given closure and $m$ available (previously unpaired) openings to the left of it, the generating function of the weight of current pair is therefore given by $\sum_{i=1}^m q^{m-i}t^{i-1}\|h\|^2_\H$. Moreover, by the same token, the sum of weights of all the pairings $\mathscr V$ in the equivalence class of a fixed $(\epsilon(2n),\ldots,\epsilon(1))\in \psi_n^{-1}(D_n)$ is given by the product of the corresponding generating functions. Finally, note that for any given closure, the ``$m$'' is determined by the underlying $(\epsilon(2n),\ldots,\epsilon(1))$; specifically, the reader may readily verify that $m$ is exactly the height of the Dyck path preceding the corresponding given SE step encoding the given closure. 
\end{proof}

Figure~\ref{labeledDyck} provides an example of assigning weights to the steps of the Dyck path given by $\psi_{14}(\ast,\ast,\ast,\ast,\ast,1,1,\ast,1,1,1,1,\ast,1)$ according to the rules defined in the above proof. In particular, in light of Lemma~\ref{lemproduct}, the product of the weights of the individual steps then yields the moment $$\varphi_{q,t}(a(h) a(h)^\ast  (a(h))^4 a(h)^\ast (a(h))^2 (a(h)^\ast)^5).$$
\begin{figure}\centering
\includegraphics[scale=0.7]{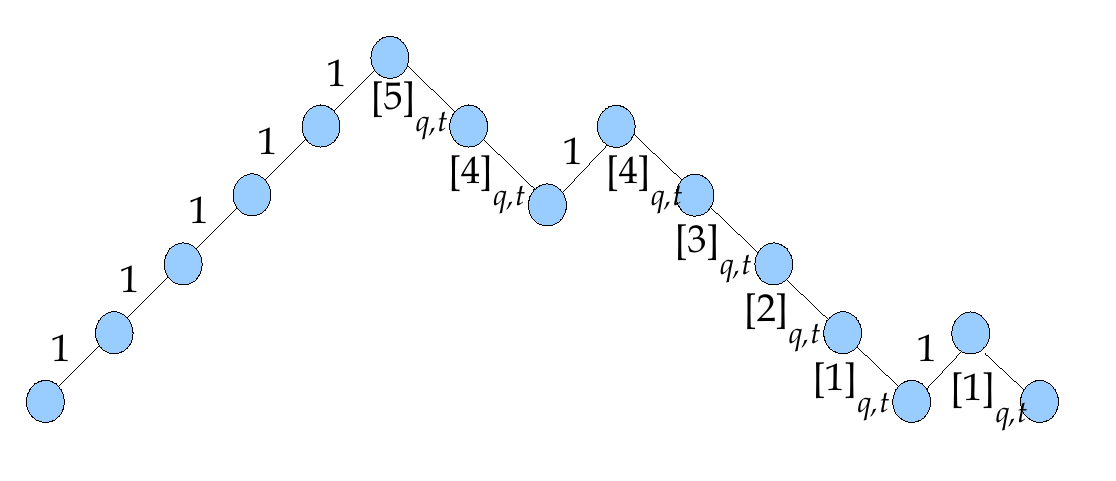}
\caption{Weighted path corresponding to Figure~\ref{pathDyck} for $\|h\|_\H=1$.}\label{labeledDyck}
\end{figure}

At this point it is also interesting to note the combinatorial statistics featuring prominently in the previous section, given by inversions and coinversions of permutations, are intimately related to those of crossings and nestings in pair partitions considered presently. In particular, by Lemma~\ref{lemP},
$$\langle g_n\otimes\ldots\otimes g_1,h_1\otimes\ldots\otimes h_n\rangle_{q,t}=\sum_{\pi\in S_n}q^{\inv(\pi)}t^{\div(\pi)}\langle g_n,h_{\pi(1)}\rangle_\H\ldots\langle g_1,h_{\pi(n)}\rangle_\H,$$ 
while, at the same time,
\begin{eqnarray*}\langle g_n\otimes\ldots\otimes g_1,h_1\otimes\ldots\otimes h_n\rangle_{q,t}&=&\langle\Omega,a(g_1)\ldots a(g_n)a^\ast(h_1)\ldots a(h_n)\Omega\rangle_{q,t}\\
&=&\sum_{\substack{\mathscr V\in \mathscr P_{2}(2n)\\\mathscr V\sim_p (1,1,\ldots,1,\ast,\ast,\ldots,\ast)}}q^{\cross(\mathscr V)}t^{\nest(\mathscr V)}\prod_{i=1}^n\langle g_{w_i},h_{z_i}\rangle_\H,
\end{eqnarray*}
where the second equality follows by Lemma~\ref{momentsC}. In other words, the value of $\langle g_n\otimes\ldots\otimes g_1,h_1\otimes\ldots\otimes h_n\rangle_{q,t}$ is computed by summing the weights over pairings $\mathscr V$ in the equivalence class of $(1,1,\ldots,1,\ast,\ast,\ldots,\ast)$. Writing each such $\mathscr V$ in the two-line notation of the previous section, with the $\epsilon(i)=1$ corresponding to the top row and $\epsilon(j)=\ast$ to the bottom row, the reader may readily verify that the crossings turn into inversions and nestings into coinversions, and vice-versa. 

\subsection{Joint and marginal statistics of $(q,t)$-Gaussian elements}

Analogously to both classical and free probability and, more generally, to the non-commutative probability constructed over the $q$-Fock space of~\cite{Bozejko1991}, the so-called ``Gaussian elements'' will turn out to occupy a fundamental role in the non-commutative probability built over the $(q,t)$-Fock space. More precisely, for $h\in\H$, the $(q,t)$-Gaussian element $s_{q,t}(h)$ is defined as the (self-adjoint) element of $\mathscr{G}_{q,t}$ given by $a(h)+a(h)^\ast$. In \cite{Blitvic2}, the $(q,t)$-Gaussian family will be tied to an extended non-commutative Central Limit Theorem and, in particular, will be identified with the limit of certain normalized sums of random matrix models. Meanwhile, note that the joint moments of the $(q,t)$-Gaussian elements come in a form that is the $(q,t)$-analogue of the familiar Wick formula (e.g. \cite{NicaSpeicher}).

\begin{defn} For $h\in\H$, let the \emph{$(q,t)$-Gaussian} element $s_{q,t}(h) \in \mathscr{G}_{q,t}$ be given by $s_{q,t}(h):=a(h)+a(h)^\ast$.\label{defsemicircular}
\end{defn}

\begin{lem}[The $q,t$-Wick Formula] For all $n\in\mathbb N$ and $h_1,\ldots,h_{2n}\in\H$,
\begin{eqnarray*}\varphi_{q,t}(s_{q,t}(h_1)\ldots s_{q,t}(h_{2n-1}))&=&0\\
\varphi_{q,t}(s_{q,t}(h_1)\ldots s_{q,t}(h_{2n}))&=& \sum_{\mathscr V=\{(w_1,z_1),\ldots(w_n,z_n)\}\in \mathscr P_{2}(2n)} q^{\cross(\mathscr V)}\,t^{\nest(\mathscr V)}\,\prod_{i=1}^n\langle h_{w_i},h_{z_i}\rangle_\H.
\end{eqnarray*}\label{qtWick}
\end{lem}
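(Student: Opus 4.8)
The plan is to reduce the computation to the mixed moments of creation and annihilation operators already handled in Lemma~\ref{momentsC}, and then to resum over sign patterns. Since $s_{q,t}(h_i)=a(h_i)+a(h_i)^\ast$ and $\varphi_{q,t}$ is linear, I would first expand the product by multilinearity:
\[
\varphi_{q,t}\big(s_{q,t}(h_1)\ldots s_{q,t}(h_{k})\big)=\sum_{\epsilon\in\{1,\ast\}^{k}}\varphi_{q,t}\big(a(h_1)^{\epsilon(1)}\ldots a(h_{k})^{\epsilon(k)}\big).
\]
For $k=2n-1$ odd, every summand on the right vanishes by the odd-case statement of Lemma~\ref{momentsC}, giving the first claim immediately. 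Hence the content lies entirely in the even case $k=2n$.

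For $k=2n$, I would substitute the pairing expansion from Lemma~\ref{momentsC} into each summand and interchange the two finite sums:
\[
\varphi_{q,t}\big(s_{q,t}(h_1)\ldots s_{q,t}(h_{2n})\big)=\sum_{\mathscr V\in\mathscr P_2(2n)}q^{\cross(\mathscr V)}t^{\nest(\mathscr V)}\sum_{\epsilon\in\{1,\ast\}^{2n}}\prod_{i=1}^n\varphi_{q,t}\big(a(h_{w_i})^{\epsilon(w_i)}a(h_{z_i})^{\epsilon(z_i)}\big),
\]
where $\mathscr V=\{(w_1,z_1),\ldots,(w_n,z_n)\}$. The crucial observation is that, since a pairing $\mathscr V$ partitions the index set $[2n]$ into the disjoint pairs $\{w_i,z_i\}$, the sign variables $\epsilon(1),\ldots,\epsilon(2n)$ are partitioned accordingly, so the inner sum over $\epsilon$ factorizes as a product over pairs of the local sums $\sum_{\epsilon(w_i),\epsilon(z_i)\in\{1,\ast\}}\varphi_{q,t}(a(h_{w_i})^{\epsilon(w_i)}a(h_{z_i})^{\epsilon(z_i)})$.

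It then remains to evaluate the four two-point functions. Using $a(g)^\ast\Omega=g$, $a(g)\Omega=0$, and the single-particle action $a(f)g=\langle f,g\rangle_\H\,\Omega$, I would verify directly that $\varphi_{q,t}(a(f)a(g)^\ast)=\langle f,g\rangle_\H$, while the other three combinations $\varphi_{q,t}(a(f)^\ast a(g)^\ast)$, $\varphi_{q,t}(a(f)a(g))$, and $\varphi_{q,t}(a(f)^\ast a(g))$ all vanish (either because $a(g)\Omega=0$ or because the two sides live in orthogonal particle sectors). Hence each local sum collapses to its single surviving term of value $\langle h_{w_i},h_{z_i}\rangle_\H$, the inner sum equals $\prod_{i=1}^n\langle h_{w_i},h_{z_i}\rangle_\H$, and the stated formula follows. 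I do not expect a genuine obstacle here: the interchange of sums is over finite index sets, and the only point requiring care is the bookkeeping of conventions, ensuring that the unique surviving pattern $\epsilon(w_i)=1$, $\epsilon(z_i)=\ast$ on each pair is consistent with the equivalence $\mathscr V\sim_p(\epsilon(2n),\ldots,\epsilon(1))$ used in Lemma~\ref{momentsC}; this consistency is precisely the nonvanishing criterion for the two-point functions recorded at the end of that lemma's proof.
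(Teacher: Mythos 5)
Your proposal is correct and takes essentially the same route as the paper: the paper's proof likewise expands $\varphi_{q,t}(s_{q,t}(h_1)\ldots s_{q,t}(h_k))$ by multilinearity into a sum over sign patterns $\epsilon\in\{1,\ast\}^k$ and then concludes by Lemma~\ref{momentsC} together with (\ref{defT}). The only cosmetic difference is that you invoke the product-of-two-point-functions form of Lemma~\ref{momentsC} and factorize the $\epsilon$-sum pair by pair, whereas the paper resums via the equivalence classes under $\sim_p$ encoded in $T_{q,t}$; both hinge on the same vanishing criterion $\epsilon(w_i)=1$, $\epsilon(z_i)=\ast$ recorded at the end of that lemma's proof.
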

\begin{proof} 
It suffices to note that
$$\varphi_{q,t}(s_{q,t}(h_1)\ldots s_{q,t}(h_{n}))=\sum_{\epsilon(1),\ldots,\epsilon(n)\in\{1,\ast\}}\varphi_{q,t}(a(h_1)^{\epsilon(1)}\ldots a(h_{2n})^{\epsilon(2n)}),$$
from which the result follows directly by Lemma~\ref{momentsC} and (\ref{defT}).
\end{proof}

Focusing on a single element $s_{q,t}(h)$, the corresponding moments can be expressed in a particularly elegant form. In particular, in light of Lemma~\ref{lemproduct}, the moment $\varphi_{q,t}(s_{q,t}(h)\ldots s_{q,t}(h))$ is expressible via generating functions of weighted Dyck paths, which one can then interpret as a continued fraction via a well-known correspondence (e.g. \cite{Flajolet1980}). Note that the continued-fraction formulation in the following lemma already features in \cite{Kasraoui2006}, in a more general combinatorial context.

\begin{lem} The moments of the $(q,t)$-Gaussian element $s_{q,t}(h)$ are given by
\begin{eqnarray*}
\varphi_{q,t}(s_{q,t}(h)^{2n-1})&=&0\\
\varphi_{q,t}(s_{q,t}(h)^{2n})&=&\|h\|_\H^{2n} \sum_{\mathscr V\in \mathscr P_2(2n)}  \!\!\!q^{\cross(\mathscr V)}\,t^{\nest(\mathscr V)}
=\,\|h\|_\H^{2n}\,\,[z^n]\,\cfrac{1}{1-\cfrac{[1]_{q,t}z}{1-\cfrac{[2]_{q,t}z}{1-\cfrac{[3]_{q,t}z}{\ldots}}}},
\end{eqnarray*}
where $[z^n](\cdot)$ denotes the coefficient of the $z^n$ term in the power series expansion of $(\cdot)$.
\label{lemsemicircular}
\end{lem}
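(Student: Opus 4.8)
The plan is to derive both displayed forms of the even moment from results already in hand, treating the two equalities separately, while the odd-moment identity is immediate. For the vanishing of the odd moments and for the first (pair-partition) expression, I would simply specialize the $q,t$-Wick formula of Lemma~\ref{qtWick} to the case $h_1=\cdots=h_{2n}=h$. Since $\mathscr P_2(2n-1)=\emptyset$, the odd moment vanishes; and for the even moment every contraction $\langle h_{w_i},h_{z_i}\rangle_\H$ collapses to $\langle h,h\rangle_\H=\|h\|_\H^2$, so the product over the $n$ pairs of any $\mathscr V$ contributes the common factor $\|h\|_\H^{2n}$, leaving $\varphi_{q,t}(s_{q,t}(h)^{2n})=\|h\|_\H^{2n}\sum_{\mathscr V\in\mathscr P_2(2n)}q^{\cross(\mathscr V)}t^{\nest(\mathscr V)}$.

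For the continued-fraction expression I would instead expand $s_{q,t}(h)^{2n}=\sum_{\epsilon\in\{1,\ast\}^{2n}}a(h)^{\epsilon(1)}\cdots a(h)^{\epsilon(2n)}$ and use linearity of $\varphi_{q,t}$, so that the even moment becomes a sum of the single-variable mixed moments $\varphi_{q,t}(a(h)^{\epsilon(1)}\cdots a(h)^{\epsilon(2n)})$. By Lemma~\ref{lemproduct} (obtained via Lemma~\ref{momentsC}), such a term vanishes unless the reversed string $(\epsilon(2n),\ldots,\epsilon(1))$ lies in $\psi_{2n}^{-1}(D_n)$, i.e.\ encodes a Dyck path, in which case it equals the weight $\widetilde{\text{wt}}$ of that path: the product over its SE steps of the factor $[m]_{q,t}$ attached to a descent from height $m$, together with one factor $\|h\|_\H^2$ per contracted pair. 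Summing over all $\epsilon$ therefore identifies $\varphi_{q,t}(s_{q,t}(h)^{2n})$ with $\|h\|_\H^{2n}$ times the generating polynomial of all Dyck paths of length $2n$, each weighted by $\prod_{\text{SE steps}}[m]_{q,t}$.

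It remains to recognize this weighted-Dyck-path generating function as the stated continued fraction. Here I would invoke the classical Flajolet correspondence \cite{Flajolet1980}: for a family of level weights $\lambda_1,\lambda_2,\ldots$, the ordinary generating function $\sum_{n\geq 0}z^n\sum_{P\in D_n}\prod_{\text{SE steps of }P}\lambda_{\text{height}}$, with NE steps carrying unit weight, equals the Stieltjes-type continued fraction whose successive partial numerators are $\lambda_m z$. Taking $\lambda_m=[m]_{q,t}$ matches exactly the step weights produced by Lemma~\ref{lemproduct}, so extracting the coefficient of $z^n$ yields the continued-fraction expression displayed in the statement.

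I do not expect a genuine obstacle: both halves reduce to previously established results, and the final passage to the continued fraction is a cited classical theorem. The only point demanding care is the bookkeeping---matching the height-dependent down-step weight $[m]_{q,t}$ of Lemma~\ref{lemproduct} to the partial numerators $[m]_{q,t}\,z$ of the continued fraction, and tracking the normalization $\|h\|_\H^{2n}$ coming from one factor $\|h\|_\H^2$ per contracted pair (equivalently, per SE step). That the two derived forms agree is then automatic, since each equals the same moment.
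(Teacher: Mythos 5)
Your proposal is correct and follows essentially the same route as the paper: the paper's proof also expands $s_{q,t}(h)^{2n}$ into mixed moments of $a(h)^{\epsilon(i)}$, applies Lemma~\ref{lemproduct} to reduce the sum to weighted Dyck paths with SE-step weights $[m]_{q,t}$, and then cites the classical Flajolet encoding to obtain the continued fraction, while the pair-partition form follows from the same machinery (Lemma~\ref{momentsC}/Lemma~\ref{qtWick}) exactly as you specialize it. Your explicit bookkeeping of the $\|h\|_\H^2$ factor per SE step is a welcome clarification of a point the paper's statement of Lemma~\ref{lemproduct} leaves implicit.
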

\begin{proof}
\item Expanding the moment and applying Lemma~\ref{lemproduct},
\begin{eqnarray*}\varphi_{q,t}(s_{q,t}(h)^{2n})&=&\sum_{\epsilon(1),\ldots,\epsilon(n)\in\{1,\ast\}}\text{wt}(\epsilon(2n),\ldots,\epsilon(1);h,\ldots,h)\\&=&\sum_{(\epsilon(k),\ldots,\epsilon(1))\in\psi^{-1}(D_n)} \widetilde{\text{wt}}(\epsilon(k),\ldots,\epsilon(1);h).
\end{eqnarray*}
The continued fraction expansion of $\sum_{n\geq 0}\varphi_{q,t}(s_{q,t}(h)^{2n})z^n$ is then obtained by the classic encoding of weighted Dyck paths (see \cite{Flajolet1980}, also in a more relevant context \cite{Biane1997} and \cite{Kasraoui2006}).
\end{proof}

By the previous lemma, the first four even moments of $s_{q,t}(h)$ are thus given by
\begin{eqnarray*}\varphi_{q,t}(s_{q,t}(h)^{0})&=&1\\
\varphi_{q,t}(s_{q,t}(h)^{2})&=&\|h\|_\H^2\\
\varphi_{q,t}(s_{q,t}(h)^{4})&=& \|h\|_\H^4\,(1 + q + t)\\
\varphi_{q,t}(s_{q,t}(h)^{6})&=&  \|h\|_\H^6\,(1 + 2q + 2t + 2qt + q^2+ t^2+ 2q^2t + 2qt^2+ q^3+ t^3),
\end{eqnarray*}
may be verified by enumerating all chord-crossing diagrams with, respectively, between 0 and 3 chords and counting the corresponding crossings and nestings.

\begin{remark} Choosing $\|e\|_\H=1$, $\varphi_{q,t}(s_{q,t}(e)^{2n})$ is equal to the generating function of crossings and nestings in $\mathscr P_2(2n)$, that is,
$$\varphi_{q,t}(s_{q,t}(e)^{2n})=\sum_{\mathscr V\in \mathscr P_2(2n)}  \!\!\!q^{\cross(\mathscr V)}\,t^{\nest(\mathscr V)},$$
whose continued-fraction encoding, as previously noted, already explicitly features in \cite{Kasraoui2006}.

Since $[n]_{q,t}=[n]_{t,q}$, the continued-fraction expansion shows that the joint generating function is in fact symmetric in the two variables, that is,
$$\sum_{\mathscr V\in \mathscr P_2(2n)}  \!\!\!q^{\cross(\mathscr V)}\,t^{\nest(\mathscr V)}=\sum_{\mathscr V\in \mathscr P_2(2n)}  \!\!\!t^{\cross(\mathscr V)}\,q^{\nest(\mathscr V)}.$$
Given the fundamental constraint $|q|<t$, necessary for the positivity of the sesquilinear form $\langle,\rangle_{q,t}$, this symmetry is surprising. Indeed, let 
$$T_{q,t}(n):=\left\{\begin{array}{ll}\sum_{\mathscr V\in \mathscr P_2(n)}  q^{\cross(\mathscr V)}\,t^{\nest(\mathscr V)}&n\text{ even}\\0&n\text{ odd}\end{array}\right.$$ and note that associating $T_{q,t}(n)$ with the moments of a self-adjoint element (viz. $s_{q,t}(e)$) in a $C^\ast$ probability space ensures the positivity of $(T_{q,t}(n))_{n\in\mathbb N}$ (as a number sequence) in the range $|q|<t$ \cite{Akhiezer}. The above symmetry then yields the positivity of $T_{t,q}(n)$ for $|q|<t$, which, in turn, guarantees the existence of a real measure whose moments are given by $T_{t,q}(n)$ \cite{Akhiezer}. This observation naturally beckons the construction of spaces that would give rise to such ``reflected'' Gaussian algebras. 
\end{remark}

\begin{remark}
Letting $t=1$ in the moment formulation of Lemma~\ref{lemsemicircular} yields an analogous continued fraction with $[n]_{q}$ playing the role of the coefficients $[n]_{q,t}$. What is more, the corresponding generating function admits an interesting explicit (even if not closed-form) expression, namely
$$\sum_{\mathscr V\in \mathscr P_2(2n)}  \!\!\!q^{\cross(\mathscr V)}=\frac{1}{(1-q)^n}\sum_{k=-n}^n(-1)^kq^{k(k-1)/2}{{2n}\choose{n+k}},$$
known as the \emph{Touchard-Riordan formula} \cite{Touchard1952,Touchard1950,Touchard1950-2,Riordan1975}.
\end{remark}

The general form of continued fraction appearing in Lemma~\ref{lemsemicircular} is known as the Stieltjes-fraction ($S$-fraction). Specifically, an $S$-fraction is written as $\frac{1|}{|1}+\frac{\lambda_1 z|}{|1}+\frac{\lambda_2 z|}{|1}+\frac{\lambda_3 z|}{|1}+\ldots$, where, in the present setting, $\lambda_n=[n]_{q,t}$. In the combinatorial approach to orthogonal polynomials pioneered by Flajolet and Viennot \cite{Flajolet1980,Viennot1985}, an $S$-fraction $\frac{1|}{|1}+\frac{\lambda_1 z|}{|1}+\frac{\lambda_2 z|}{|1}+\frac{\lambda_3 z|}{|1}+\ldots$ and the orthogonal polynomial sequence $\{y_n(z)\}_{n\geq 0}$ given by $y_0(z)=1$, $y_1(z)=1$, and $y_{n+1}(z)=zy_n(z)-\lambda_n y_{n-1}(z)$ can both be considered as encodings of a certain object. Namely, they both encode a collection of Dyck paths with NE steps carrying unit weight and each SE step of height $n\mapsto n-1$ carrying weight $\lambda_n$. Returning to the setting at hand, consider therefore the following (further) deformation of the classical and quantum Hermite orthogonal polynomials.

\begin{defn}
The $(q,t)$-Hermite orthogonal polynomial sequence $\{H_n(z;q,t)\}_{n\geq 0}$ is determined by the following three-term recurrence:
$$zH_n(z;q,t)=H_{n+1}(z;q,t)+[n]_{q,t}H_{n-1}(z;q,t),$$
with $H_0(z;q,t)=1,\,\,H_1(z;q,t)=z.$
\label{qtHermite}
\end{defn}

While the following Lemma~\ref{lem_qt_Hermite} can also be deduced from the classical theory (e.g. \cite{Akhiezer}), the combinatorial approach of \cite{Viennot1985} reveals the rich structure associated with orthogonal polynomial sequences and provides a lucid link between continued fractions, moment sequences, and orthogonal polynomials.   
In particular, interpreting the underlying Hankel determinants as certain sums of weighted paths, the $S$-type fraction  $\frac{1|}{|1}+\frac{\lambda_1 z|}{|1}+\frac{\lambda_2 z|}{|1}+\frac{\lambda_3 z|}{|1}+\ldots$ is readily shown to be the continued-fraction encoding of the generating function $\sum_{n\geq 0}m_nz^n$, where $(m_n)_{n\geq 0}$ is the moment sequence associated with an orthogonalizing linear functional for the polynomial sequence $y_0(z)=1$, $y_1(z)=1$, $y_{n+1}(z)=zy_n(z)-\lambda_n y_{n-1}(z)$. In the present context, in the light of Lemma~\ref{lemsemicircular}, this correspondence yields that the $(q,t)$-Hermite polynomial sequence is orthogonal with respect to the linear functional given as the integral against the measure of $s_{q,t}(e)$. In turn, since $s_{q,t}(e)$ is bounded for $|q|< t\leq 1$ (as the real part of a bounded operator, cf. Lemma~\ref{lemmanorm}), the corresponding measure is compactly supported and is therefore uniquely determined by its moment sequence. 

To summarize:
\begin{lem}
The distribution of the $(q,t)$-Gaussian element $s_{q,t}(e)$, where $\|e\|_\H=1$, is the unique real measure that orthogonalizes the $(q,t)$-Hermite orthogonal polynomial sequence.\label{lem_qt_Hermite}
\end{lem}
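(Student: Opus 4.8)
The plan is to invoke the Flajolet--Viennot dictionary between $S$-fractions, moment sequences, and orthogonal polynomial sequences, and to combine it with the boundedness established in Lemma~\ref{lemmanorm} to secure uniqueness. Concretely, recall that the $(q,t)$-Hermite sequence of Definition~\ref{qtHermite} is exactly the monic orthogonal polynomial sequence attached to the three-term recurrence $y_{n+1}(z) = z\,y_n(z) - \lambda_n y_{n-1}(z)$ with $\lambda_n = [n]_{q,t}$. By Favard's theorem (equivalently, the combinatorial theory of \cite{Flajolet1980,Viennot1985}), such a sequence is orthogonalized by a linear functional whose moment generating function is the $S$-fraction $\frac{1|}{|1}+\frac{[1]_{q,t}z|}{|1}+\frac{[2]_{q,t}z|}{|1}+\ldots$. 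First I would observe that this is the very same continued fraction that Lemma~\ref{lemsemicircular} identifies with $\sum_{n\geq 0}\varphi_{q,t}(s_{q,t}(e)^n)z^n$. Hence the moment sequence of $s_{q,t}(e)$ coincides, term by term, with the moment sequence of any orthogonalizing functional for the $(q,t)$-Hermite polynomials.

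Next I would promote this equality of moments to an equality of measures. Since $|q| < t$, the form $\langle\,,\,\rangle_{q,t}$ is genuinely positive definite by Lemma~\ref{lempositivity}, so $\varphi_{q,t}$ is a state and $s_{q,t}(e) = a(e) + a(e)^\ast$ is self-adjoint. By the spectral theorem its distribution under $\varphi_{q,t}$ is a bona fide probability measure $\mu$ on $\R$, whose $n$-th moment is precisely $\varphi_{q,t}(s_{q,t}(e)^n)$. In particular the coefficients $\lambda_n = [n]_{q,t}$ are automatically positive in the range $|q| < t \le 1$, so no separate positivity or Hankel-determinant check is needed: existence of the orthogonalizing measure is inherited from the operator-algebraic construction rather than verified by hand.

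The key remaining point --- and the only genuinely analytic step --- is uniqueness, i.e. determinacy of the moment problem. Here I would use Lemma~\ref{lemmanorm}: for $|q| < t \le 1$ the operators $a(e)$ and $a(e)^\ast$ are bounded, hence so is $s_{q,t}(e)$, and therefore $\mu$ is compactly supported. A compactly supported measure is uniquely determined by its moments, so $\mu$ is the \emph{unique} real probability measure with moments $\varphi_{q,t}(s_{q,t}(e)^n)$, and thus the unique measure orthogonalizing the $(q,t)$-Hermite sequence. I expect the main obstacle to be exactly this determinacy argument: it is what forces the restriction $t \le 1$ (which guarantees boundedness), and without compact support the three-term recurrence alone would only pin the measure down up to the indeterminacy of a Hamburger moment problem.
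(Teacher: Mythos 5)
Your proposal is correct and follows essentially the same route as the paper: both identify the $S$-fraction of Lemma~\ref{lemsemicircular} with the Flajolet--Viennot (equivalently Favard) orthogonalizing functional for the recurrence with coefficients $[n]_{q,t}$, and both settle uniqueness by noting that boundedness of $s_{q,t}(e)$ (Lemma~\ref{lemmanorm}) forces compact support and hence moment determinacy. The only difference is presentational: you make explicit the spectral-theorem step producing the probability measure and the role of positivity (Lemma~\ref{lempositivity}), which the paper leaves implicit.
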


\subsection{The $(\Gamma_{q,t},\varphi_{q,t})$ $\ast$-probability space}

While the pair $(\mathscr{G}_{q,t},\varphi_{q,t})$ provides a natural setting in which one can discuss the mixed moments of the creation operators and field operators,  it is not an especially well-behaved non-commutative probability space. In particular, the functional $\varphi_{q,t}$ is neither faithful nor tracial, as $\varphi_{q,t}(a(h)^\ast a(h))=0\neq \varphi_{q,t}(a(h) a(h)^\ast)=\|f\|_\H$. As in the full Fock space and, more generally, the $q$-Fock space, one may wish instead to focus on a subalgebra of non-commutative random variables generated by the $(q,t)$-Gaussian elements of Definition~\ref{defsemicircular}. In particular, consider the unital $\ast$-algebra generated by $\{s_{q,t}(h)\mid h\in\H\}$ and let $\Gamma_{q,t}$ denote its weak closure in $\mathscr B(\F_{q,t})$. In other words, $\Gamma_{q,t}$ is the $(q,t)$-Gaussian von Neumann algebra. While the vacuum expectation state on the corresponding algebra on the $q$-Fock space, corresponding to the $t=1$ case in the present setting, is tracial \cite{Bozejko1994}, the same does not hold in the more general case.

\begin{prop} For $\text{dim}(\H)\geq 2$, $\Gamma_{q,t}$ is tracial if and only if $t=1$.\label{lemtrace}
\end{prop}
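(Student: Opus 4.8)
The plan is to prove non-traciality for $t \neq 1$ by exhibiting an explicit pair of elements whose trace fails to commute, and to prove traciality for $t = 1$ by recalling that this is exactly the $q$-Fock space, where the result is known \cite{Bozejko1994}. The direction $t = 1 \Rightarrow$ tracial is immediate: by Remark following \eqref{defform}, setting $t = 1$ recovers the inner product on the $q$-Fock space, so $\Gamma_{q,1}$ coincides with the $q$-Gaussian von Neumann algebra, whose vacuum state is tracial by \cite{Bozejko1994}. The substance is the converse, for which I would exploit the fact that $\dim(\H) \geq 2$ to pick two orthonormal vectors $e_1, e_2 \in \H$ and test traciality on a low-degree mixed moment of the Gaussians $s_{q,t}(e_1)$ and $s_{q,t}(e_2)$.

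The cleanest obstruction should already appear at degree four. First I would compute $\varphi_{q,t}\big(s_{q,t}(e_1)\,s_{q,t}(e_2)\,s_{q,t}(e_1)\,s_{q,t}(e_2)\big)$ and compare it to its cyclic rotation $\varphi_{q,t}\big(s_{q,t}(e_2)\,s_{q,t}(e_1)\,s_{q,t}(e_2)\,s_{q,t}(e_1)\big)$; a nonzero difference for $t \neq 1$ witnesses non-traciality. Each of these is evaluated by the $q,t$-Wick formula (Lemma~\ref{qtWick}), summing $q^{\cross(\mathscr V)}t^{\nest(\mathscr V)}\prod_i \langle h_{w_i}, h_{z_i}\rangle_\H$ over $\mathscr V \in \mathscr P_2(4)$. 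Because $e_1 \perp e_2$, only those pairings survive whose pairs each join two equal vectors. With the vector assignment $(e_1, e_2, e_1, e_2)$ the surviving pairing is $\{(1,3),(2,4)\}$, a single crossing, giving weight $q$; with assignment $(e_2, e_1, e_2, e_1)$ the surviving pairing is again $\{(1,3),(2,4)\}$ but now the inner products are taken in the rotated order, and the point is that the crossing/nesting statistic registered by the Wick formula — which depends only on the partition, not the vectors — is the same, so this particular test may actually vanish. I therefore expect the genuinely asymmetric test to require a word in which the two vectors are distributed so that one admissible pairing is a crossing and the cyclically rotated word forces that same geometric pairing to instead be counted as a nesting.

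The heart of the argument, and the main obstacle, is choosing the word so that cyclic rotation converts a crossing into a nesting (or vice versa), since $\cross$ and $\nest$ carry the asymmetric weights $q$ versus $t$. Concretely I would examine $\varphi_{q,t}\big(s_{q,t}(e_1)^2\, s_{q,t}(e_2)^2\big)$ versus a rotation, or better a degree-six word, and track how the linear ordering underlying the definitions of $\cross$ and $\nest$ (Definition~\ref{defCrossNest}) is permuted. The difference will come out as a nonzero polynomial multiple of $(t - q)$ or, after factoring, of $(t-1)$ against a manifestly positive quantity in the admissible range $|q| < t \leq 1$; the cleanest outcome is a difference proportional to $t - 1$, which vanishes exactly when $t = 1$ and is nonzero otherwise.

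I would conclude by assembling the two directions: traciality at $t = 1$ from \cite{Bozejko1994} via the identification of algebras, and non-traciality for $t \neq 1$ from the explicit mixed moment whose cyclic invariance fails. The only delicate bookkeeping is the correct counting of crossings and nestings after permuting the word, which I would verify directly from Definition~\ref{defCrossNest} using the chord-diagram picture, possibly cross-checking against the low-degree moment expansions $\varphi_{q,t}(s_{q,t}(h)^4) = \|h\|_\H^4(1 + q + t)$ already recorded after Lemma~\ref{lemsemicircular}.
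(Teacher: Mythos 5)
Your proposal is correct and takes essentially the same approach as the paper: the $t=1$ direction is cited to \cite{Bozejko1994}, and for $t\neq 1$ the paper uses exactly your proposed degree-four test, namely $h_1=h_2=e_1$, $h_3=h_4=e_2$, where the $(q,t)$-Wick formula gives $\varphi_{q,t}\bigl(s_{q,t}(e_1)^2s_{q,t}(e_2)^2\bigr)=1$ while the cyclic rotation $\varphi_{q,t}\bigl(s_{q,t}(e_2)s_{q,t}(e_1)^2s_{q,t}(e_2)\bigr)=t$, so no degree-six word is needed. One small correction to your heuristic: crossings are invariant under cyclic rotation of the word (which is precisely why your alternating test $s_{q,t}(e_1)s_{q,t}(e_2)s_{q,t}(e_1)s_{q,t}(e_2)$ vanishes as a test), and the obstruction is instead that the disjoint weight-$1$ pairing trades places with the nesting weight-$t$ pairing, as the paper's remark after the proof explains.
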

\begin{proof} The forward direction is established in \cite{Bozejko1994} in a more general setting. For the converse, it suffices to consider some four vectors $h_1,h_2,h_3,h_4\in\F_{q,t}$. Using Lemma~\ref{qtWick}, 
$$\varphi_{q,t}(s_{q,t}(h_1)s_{q,t}(h_2)s_{q,t}(h_3)s_{q,t}(h_4))_{q,t}=\langle h_1,h_2\rangle_\H\langle h_3,h_4\rangle_\H+t\langle h_1,h_4\rangle_\H\langle h_2,h_3\rangle_\H+q\langle h_1,h_3\rangle_\H\langle h_2,h_4\rangle_\H$$
and
$$\varphi_{q,t}(s_{q,t}(h_4)s_{q,t}(h_1)s_{q,t}(h_2)s_{q,t}(h_3))_{q,t}=\langle h_4,h_1\rangle_\H\langle h_2,h_3\rangle_\H+t\langle h_4,h_3\rangle_\H\langle h_1,h_2\rangle_\H+q\langle h_4,h_2\rangle_\H\langle h_1,h_3\rangle_\H.$$
Since $\H$ is a real Hilbert space, the terms in $q$ are equal in both of the above expressions, but the terms in $1$ and $t$ are interchanged. Denoting by $\{e_i\}$ the basis of $\H$ and taking $h_1=h_2=e_1$ and $h_3=h_4=e_2$, it follows that the two expressions are equal if and only if $t=1$.
\end{proof}

\begin{remark}
The fact that the term in $q$ in the previous example remained the same for both $\langle s_{q,t}(h_1)s_{q,t}(h_2)s_{q,t}(h_3)s_{q,t}(h_4)\Omega\rangle_{q,t}$ and $\langle\Omega,s_{q,t}(h_4)s_{q,t}(h_1)s_{q,t}(h_2)s_{q,t}(h_3)\Omega\rangle_{q,t}$ is not coincidental. Indeed, from a combinatorial viewpoint, commuting two products of Gaussian elements is equivalent to rotating (by a fixed number of positions) the chord diagrams corresponding to each of the non-vanishing products of the underlying creation and annihilation operators. The observed equality then follows from the fact that the crossings in a chord diagram are preserved under diagram rotations. The same is however not true of nestings, which is the combinatorial reason for the overall loss of traciality of $\varphi$ for $t<1$. 
\end{remark}

\section{Case $0=q<t$: ``$t$-deformed Free Probability''}
\label{t-free}
The remainder of the paper considers the case $0=q<t\leq 1$, corresponding to a new single-parameter deformation of the full Boltzmann Fock space of free probability~\cite{Voiculescu1986,Voiculescu1992}. 
Once again, this deformation will turn out to be particularly natural. In particular, the statistics of the $t$-deformed semicircular element will be described in an elegant form afforded by the deformed Catalan numbers of Carlitz and Riordan \cite{Carlitz1964}, the Rogers-Ramanujan continued fraction, and the $t$-Airy function of Ismail \cite{Ismail2005}. Moreover, this setting will give rise to a natural counterpart of the result of Wigner~\cite{Wigner1955}, as the $t$-deformed semicircular element will be seen to encode the first-order statistics of correlated Wigner processes. Prior to delving into these properties, we take a moment to show that the von Neumann algebra of bounded linear operators on $\F_{0,t}(\H)$ is generated by the creation operators $\{a_{0,t}(h)\}_{h\in\H}$.

\begin{lem} For $q=0<t\leq 1$, the von Neumann algebra $\mathscr W_{0,t}$ generated by $\{a_{0,t}(h)\}_{h\in\H}$ is $\mathscr B(\F_{0,t}(\H))$.\label{generateB}
\end{lem}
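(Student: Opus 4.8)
The plan is to prove the stronger statement that $\mathscr W_{0,t}$ contains every rank-one operator on $\F_{0,t}$; since a $\ast$-subalgebra containing all finite-rank operators is weakly dense in $\mathscr B(\F_{0,t})$, and $\mathscr W_{0,t}$ is weakly closed, this forces $\mathscr W_{0,t}=\mathscr B(\F_{0,t})$. The key simplification is that at $q=0$ the operators degenerate: by (\ref{cdef2}) the annihilation operator acts by $a_{0,t}(f)\,h_1\otimes\cdots\otimes h_n=t^{n-1}\langle f,h_1\rangle_{_\H}\,h_2\otimes\cdots\otimes h_n$, seeing only the first tensor leg, while $a_{0,t}(f)^\ast$ is the free creation operator of (\ref{cdef1}). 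Recall also that $\mathscr W_{0,t}$, being a von Neumann algebra, automatically contains the adjoints $a_{0,t}(f)^\ast$.

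The crucial first step is to place the vacuum projection $P_\Omega=|\Omega\rangle\langle\Omega|$ inside $\mathscr W_{0,t}$. Here the relation $(q,t)$-CR of Lemma~\ref{comrel}, specialized to $q=0$ and to a single unit vector $e\in\H$, yields $a_{0,t}(e)\,a_{0,t}(e)^\ast=t^{N}$, so that $t^{N}\in\mathscr W_{0,t}$ \emph{outright}, with no limiting procedure. For $0<t<1$ the positive operator $t^N$ has spectrum $\{t^n:n\geq 0\}\cup\{0\}$, in which the eigenvalue $1$ (eigenspace $\mathbb C\Omega$) is isolated; applying continuous functional calculus to $t^N$ then produces its spectral projection at $1$, namely $P_\Omega\in\mathscr W_{0,t}$. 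The boundary case $t=1$ is the full Boltzmann Fock space, where $t^N=I$ carries no information; there one instead invokes the classical result of Voiculescu \cite{Voiculescu1986,Voiculescu1992} that the free creation operators generate $\mathscr B(\F)$, or equivalently recovers $P_\Omega$ as $I$ minus the strong limit of the increasing, uniformly bounded net $\sum_i a_{0,t}(e_i)^\ast a_{0,t}(e_i)$ over an orthonormal basis $\{e_i\}$ of $\H$, which converges to the projection onto the orthogonal complement of $\mathbb C\Omega$.

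With $P_\Omega\in\mathscr W_{0,t}$ secured, the rest is algebraic. Since $P_\Omega$ fixes $\Omega$ and annihilates $\Omega^{\perp}$, iterating (\ref{cdef1}) gives
$$a_{0,t}(f_1)^\ast\cdots a_{0,t}(f_k)^\ast\,P_\Omega=\big|\,f_1\otimes\cdots\otimes f_k\,\big\rangle\big\langle\,\Omega\,\big|\in\mathscr W_{0,t},$$
whence, taking adjoints and using $\langle\Omega,\Omega\rangle_{0,t}=1$, also $|\Omega\rangle\langle f_1\otimes\cdots\otimes f_k|\in\mathscr W_{0,t}$; multiplying two such operators yields $|\xi\rangle\langle\eta|\in\mathscr W_{0,t}$ for all simple tensors $\xi,\eta$ and for $\Omega$. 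By linearity in each slot these extend to $|\xi\rangle\langle\eta|$ for all $\xi,\eta$ in the algebraic $\F$, and by norm-continuity of $(\xi,\eta)\mapsto|\xi\rangle\langle\eta|$ to all $\xi,\eta\in\F_{0,t}$; their finite linear combinations exhaust the finite-rank operators, giving $\mathscr W_{0,t}=\mathscr B(\F_{0,t})$. The one genuinely delicate point is the extraction of $P_\Omega$, and the virtue of the argument is that the commutation relation deposits $t^N$ directly in the algebra: for $t<1$ the construction is entirely finite, the only thing to check being that $1$ is an isolated point of $\mathrm{spec}(t^N)$ (which fails precisely at $t=1$, exactly where one reverts to the free-probability result or the strongly convergent sum above).
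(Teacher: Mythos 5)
Your proof is correct, and its overall skeleton (vacuum projection, then rank-one operators, then finite-rank operators, then weak density) coincides with the paper's; the difference lies entirely in how you place $P_\Omega$ in $\mathscr W_{0,t}$, and there your route is genuinely different. The paper takes an orthonormal basis $\{e_j\}$ of $\H$ and forms $P=t^{-N+1}\sum_{j\geq 1}a(e_j)^\ast a(e_j)$, checks that $P$ kills $\Omega$ and fixes every $e_{i_1}\otimes\cdots\otimes e_{i_n}$, and sets $P_\Omega=I-P$; this treats all $0<t\leq 1$ uniformly, but it involves an infinite sum and a correction by the operator $t^{-N+1}$ (unbounded for $t<1$), whose presence has to be read as a statement about the composite operator rather than about its factors separately. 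You instead observe that the $q=0$ commutation relation deposits $t^N=a(e)\,a(e)^\ast$ in $\mathscr W_{0,t}$ outright, and for $0<t<1$ you extract $P_\Omega$ as the spectral projection of $t^N$ at the isolated spectral point $1$ via continuous functional calculus — a single generator, no infinite sum, no unbounded correction. The price is that this degenerates exactly at $t=1$, where $t^N=I$, forcing your case split; and your fallback there (the strong limit of the increasing, uniformly bounded net $\sum_i a(e_i)^\ast a(e_i)$, or Voiculescu's classical result) is essentially the paper's construction specialized to $t=1$. So your argument buys a more elementary, genuinely finite construction on the open range $0<t<1$ at the cost of uniformity in $t$, while the paper's single formula covers the whole range at once. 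The remaining steps of your proof — that $a(f_1)^\ast\cdots a(f_k)^\ast P_\Omega$ is the rank-one operator $\xi\mapsto\langle\Omega,\xi\rangle_{q,t}\,f_1\otimes\cdots\otimes f_k$, that products of such operators and their adjoints give all rank-one operators with entries in the algebraic Fock space, and that norm-continuity plus weak density of the finite-rank operators then yield $\mathscr W_{0,t}=\mathscr B(\F_{0,t})$ — match the paper's closing argument.
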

\begin{proof} The proof of this fact follows along analogous lines to the full Boltzmann Fock space setting, namely the $q=0$, $t=1$ case. For concreteness, the following sketch adapts the proof of Theorem 2.2 of \cite{Kemp2005}. In particular, let $\{e_1,e_2,\ldots\}$ denote an orthonormal basis for $\H$ and consider the operator 
$P=t^{-N+1}\sum_{j=1}^\infty a(e_j)^\ast a(e_j)\,\in\mathscr W_{0,t}.$
Since 
$$a(e_j)\Omega=0\quad \text{and} \quad a(e_j)e_{i_1}\otimes\ldots\otimes e_{i_n}=\left\{\begin{array}{ll}0,&i_1\neq j\\t^{n-1}e_{i_2}\otimes\ldots\otimes e_{i_n},&i_1=j\end{array}\right.,$$ where $a(h):=a_{0,t}(h)$, it follows that $P\Omega=0$ and $P\,e_{i_1}\otimes\ldots\otimes e_{i_n}=e_{i_1}\otimes\ldots\otimes e_{i_n}$. Therefore, $P$ is the projection onto the closure of $\bigoplus_{n\geq 1}\mathscr H_{\mathbb C}^{\otimes n}$. Since $\langle \xi,\eta\rangle_{q,t}=0$ whenever $\xi\in\mathscr H^{\otimes n}$ and $\eta \in\mathscr H^{\otimes m}$ for $n\neq m$ ($n,m\geq 0$), $P$ is the projection onto the orthogonal complement of the vacuum and $P_\Omega:=I-P\in\mathscr W_{0,t}$ is the projection onto the vacuum. Thus, $\mathscr W_{0,t}$ contains the operator
$a(e_{i_1})^\ast\ldots a(e_{i_n})^\ast P_{\Omega}\, a(e_{j_1})\ldots a(e_{j_m}),$
which is a rank-one operator with image spanned by $e_{i_1}\otimes\ldots\otimes e_{i_n}$ and kernel orthogonal to $e_{j_1}\otimes\ldots\otimes e_{j_m}$. It follows that $\mathscr W_{0,t}$ contains all finite-rank operators. Taking closures, $\mathscr W_{0,t}\supseteq\mathscr B(\F_{0,t}(\H))$ and the result follows.
\end{proof}

\subsection{The $t$-semicircular element} In light of its present interpretation in the context of deformed free probability, the \emph{$t$-semicircular element} is a renaming of the $(0,t)$-Gaussian element $s_{0,t}(h) \in \mathscr{G}_{0,t}$. Specializing accordingly the spectral properties derived in the previous section, the $t$-semicircular element turns out to encode certain familiar objects in combinatorics and number theory.

\begin{lem} The moments of the $t$-semicircular element $s_{0,t}(h)$ are given by
\begin{eqnarray*}
\varphi_{0,t}(s_{0,t}(h)^{2n-1})&=&0\\
\varphi_{0,t}(s_{0,t}(h)^{2n})&=&\|h\|_\H^{2n} \sum_{\mathscr V\in NC_2(2n)} t^{\nest(\mathscr V)}=\|h\|_\H^{2n}\,C_n^{(t)}
\end{eqnarray*}
where $NC_2(2n)$ denotes the lattice of non-crossing pair-partitions and $C_n^{(t)}$ are referred to as the Carlitz-Riordan $t$-Catalan (or, rather, $q$-Catalan\footnote{In keeping with the original objects in \cite{Carlitz1964}, the $q$-Catalan numbers of Carlitz-Riordan are more frequently given by $q^{-{n\choose 2}}C_n^{(1/q)}$, i.e. with reversed coefficients compared to the present definition.}) numbers\cite{Furlinger1985,Carlitz1964}, given by the recurrence
\begin{equation}
C_n^{(t)}=\sum_{k=1}^n t^{k-1} C_{k-1}^{(t)} C_{n-k}^{(t)},\label{qCat}
\end{equation}
with $C_0^{(t)}=1$.
\label{lemtsemicircular}
\end{lem}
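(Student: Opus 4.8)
The plan is to specialize the general $(q,t)$-Gaussian moment formula of Lemma~\ref{lemsemicircular} to $q=0$ and then verify that the resulting quantity satisfies the Carlitz--Riordan recurrence~(\ref{qCat}). First I would address the odd moments, which vanish immediately from the $q=0$ case of Lemma~\ref{lemsemicircular} (or directly from the $q,t$-Wick formula, since $\mathscr P_2(2n-1)=\emptyset$). For the even moments, setting $q=0$ in the general formula $\varphi_{q,t}(s_{q,t}(h)^{2n})=\|h\|_\H^{2n}\sum_{\mathscr V\in\mathscr P_2(2n)}q^{\cross(\mathscr V)}t^{\nest(\mathscr V)}$ kills every term with $\cross(\mathscr V)>0$, so only pairings with $\cross(\mathscr V)=0$ survive. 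By definition these are exactly the non-crossing pair partitions $NC_2(2n)$, so the sum collapses to $\|h\|_\H^{2n}\sum_{\mathscr V\in NC_2(2n)}t^{\nest(\mathscr V)}$, which establishes the first equality. It only remains to identify this nesting generating function over $NC_2(2n)$ with $C_n^{(t)}$.

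For the second equality I would set $D_n^{(t)}:=\sum_{\mathscr V\in NC_2(2n)}t^{\nest(\mathscr V)}$ (with $D_0^{(t)}=1$ corresponding to the empty pairing) and show it obeys the recurrence~(\ref{qCat}). The standard approach is a first-return decomposition: in any $\mathscr V\in NC_2(2n)$, consider the pair containing the element $1$, say $(1,z)$. Non-crossingness forces everything paired with an index in $\{2,\ldots,z-1\}$ to stay inside $\{2,\ldots,z-1\}$, and everything in $\{z+1,\ldots,2n\}$ to stay there as well; so $\mathscr V$ splits into a non-crossing pairing of the ``inside'' block $\{2,\ldots,z-1\}$ and one of the ``outside'' block $\{z+1,\ldots,2n\}$. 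Writing $z=2k$ (the inside must have even cardinality, $2(k-1)$ elements), the outside has $2(n-k)$ elements, ranging over $k\in\{1,\ldots,n\}$. The key bookkeeping step is the nesting count: the pair $(1,2k)$ nests with precisely the $k-1$ pairs lying strictly inside it, contributing a factor $t^{k-1}$; crossings between inside and outside are impossible by non-crossingness, and nestings do not occur between inside and outside pairs either (an outside pair can neither nest over nor sit inside $(1,2k)$ once $1$ is the global minimum). Hence $\nest(\mathscr V)=(k-1)+\nest(\mathscr V_{\mathrm{in}})+\nest(\mathscr V_{\mathrm{out}})$, and summing the product of weights gives $D_n^{(t)}=\sum_{k=1}^n t^{k-1}D_{k-1}^{(t)}D_{n-k}^{(t)}$, which is exactly~(\ref{qCat}). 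With matching initial conditions $D_0^{(t)}=C_0^{(t)}=1$, induction yields $D_n^{(t)}=C_n^{(t)}$.

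The main obstacle, and the step requiring real care, is the nesting-count accounting in the first-return decomposition: one must verify that the only nestings created by the distinguished pair $(1,2k)$ are with the $k-1$ interior pairs (yielding the clean factor $t^{k-1}$) and that no cross-terms between the inside and outside blocks contribute to either $\cross$ or $\nest$. This is where the precise definitions of crossing and nesting from Definition~\ref{defCrossNest}, together with the global-minimality of the index $1$, must be invoked explicitly. Everything else is routine. As an alternative that avoids re-deriving the recurrence, I would note that the continued-fraction specialization of Lemma~\ref{lemsemicircular} at $q=0$ has Stieltjes coefficients $\lambda_n=[n]_{0,t}=t^{n-1}$, and the generating function $\sum_{n\geq0}C_n^{(t)}z^n$ is classically known~\cite{Furlinger1985,Carlitz1964} to have exactly the continued-fraction expansion with these coefficients; matching the two continued fractions term by term then identifies the moments with the $t$-Catalan numbers directly.
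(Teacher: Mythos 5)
Your proposal is correct and follows essentially the same route as the paper: specialize Lemma~\ref{lemsemicircular} at $q=0$ so that only non-crossing pairings survive, then derive the recurrence~(\ref{qCat}) via the first-return decomposition on the pair containing $1$, with the factor $t^{k-1}$ coming from the nestings of that pair with the $k-1$ interior pairs. Your explicit verification that no nestings (or crossings) arise between the inside and outside blocks is exactly the bookkeeping the paper's proof relies on (and illustrates in its Figure~\ref{recurrence}), so there is nothing to add.
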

\begin{proof} The first two equalities are obtained by Lemma~\ref{lemsemicircular}, substituting $q=0$. Next, set $\alpha_n:=\sum_{\mathscr V\in NC_2(2n)} t^{\nest(\mathscr V)}$ and note that $\alpha_0=1$. For $\pi\in NC_2(2n)$, consider the pair $(1,\beta)\in\pi$ and note that, since $\pi$ is non-crossing, (1) $\beta$ is even, and (2) the remaining pairs either belong to the interval $I=\{2,\ldots,\beta-1\}$ or to $I'=\{\beta+1,\ldots,n\}$. Thus, given a nesting in $\pi$, either both participating pairs belong to $I$ or both belong to $I'$ or one of those pairs is in fact equal to $\{1,\beta\}$. In the latter case, the nesting must be formed by drawing a second pair from $I$. Summing over all $\pi\in NC_2(2n)$ and conditioning on the choice of $2\beta\in \{1,2,\ldots,n\}$ recovers the recurrence in (\ref{qCat}). The corresponding argument is illustrated in Figure~\ref{recurrence}.
\end{proof}

\begin{figure}\centering
\includegraphics[scale=0.55]{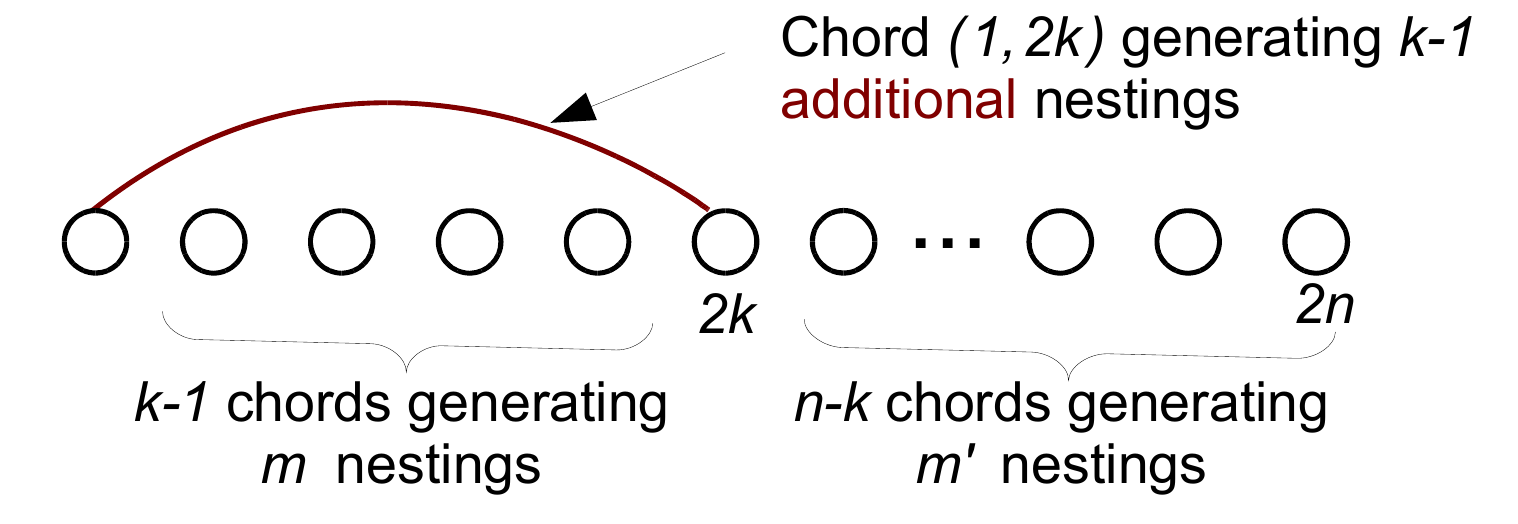}
\caption{Recurrence (\ref{qCat}): given $\pi\in NC_2(2n)$ with $(1,2k)\in\pi$ such that the chords contained in the interval $I=\{2,\ldots,2k-1\}$ generate $m$ nestings and those contained in $I'=\{2k+1,\ldots,n\}$ generate $m'$ nestings, the total number of nestings in $\pi$ is given by $m+m'+k-1$.}\label{recurrence}
\end{figure}

The first few even moments of $s_{0,t}(h)$ are thus given by $\varphi_{q,t}(s_{q,t}(h)^{2})=\|h\|_\H^2$, $\varphi_{q,t}(s_{q,t}(h)^{4})=\|h\|_\H^2(1+t)$, $\varphi_{q,t}(s_{q,t}(h)^{6})=\|h\|_\H^2(1+2t+t^2+t^3)$. The deformed Catalan numbers of the above lemma are known to be related to, among other statistics, inversions of Catalan words, Catalan permutations, and area below lattice paths \cite{Furlinger1985}. 

Rather than further considering individual moments, specializing to $q=0$ the continued fraction encoding of the $(q,t)$-Gaussian measure yields a more fascinating object still. Note that for notational convenience, and without loss of generality, the remainder of the section considers the normalized $t$-semicirculars canonically written as $s_{0,t}:=s_{0,t}(e)$, for some unit vector $e\in \H$.

\begin{lem}
The moments of the normalized $t$-semicircular element $s_{0,t}$ are encoded by the Rogers-Ramanujan continued fraction as
$$\sum_{n\geq 0}\varphi_{0,t}(s_{0,t}(h)^{n})z^n=\cfrac{1}{1-\cfrac{t^0\,z}{1-\cfrac{t^1\,z}{1-\cfrac{t^2\,z}{\ldots}}}}$$

The Cauchy transform of the $t$-semicircular measure $\mu_{0,t}$ associated with $s_{0,t}(e)$ is given by 
$$\int_{\R}\frac{1}{z-\eta}\,d\mu_{0,t}(\eta)=z^{-1}\,\frac{\sum_{n\geq 0} (-1)^n\,\frac{t^{n^2}}{(1-t)(1-t^2)\ldots(1-t^n)}\,z^{-n}}{\sum_{n\geq 0} (-1)^n\,\frac{t^{n(n-1)} }{(1-t)(1-t^2)\ldots(1-t^n)}\,z^{-n}}.$$
\end{lem}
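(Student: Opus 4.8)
The first assertion requires no new work: setting $q=0$ in Lemma~\ref{lemsemicircular} and using $[n]_{0,t}=(t^n-0^n)/(t-0)=t^{\,n-1}$, the $S$-fraction coefficients $[n]_{q,t}$ collapse to $t^{\,n-1}$, which is precisely the Rogers--Ramanujan continued fraction displayed in the statement. So the entire content lies in the closed form for the Cauchy transform, and my plan is to exhibit a single functional equation satisfied both by the moment generating series and by the proposed ratio of $t$-Airy functions, and then to invoke uniqueness of power-series solutions.

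First I would record a $q$-difference equation for $A_t$. Writing the denominators as $\prod_{j=1}^n(1-t^j)$ and using the identity $(1-t^n)/\prod_{j=1}^n(1-t^j)=1/\prod_{j=1}^{n-1}(1-t^j)$, a one-line reindexing of $A_t(w)-A_t(tw)$ yields
\[
A_t(w/t)=A_t(w)-w\,A_t(tw).
\]
Dividing through by $A_t(w)$ and setting $H(w):=A_t(w)/A_t(w/t)$, so that $A_t(tw)/A_t(w)=H(tw)$, converts this into the fixed-point relation $H(w)=1/\bigl(1-w\,H(tw)\bigr)$.

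On the other side, let $M(w):=\sum_{n\ge0}\varphi_{0,t}(s_{0,t}^{\,2n})\,w^n$ be the moment generating series, which by the first part is the $S$-fraction with coefficients $t^{\,n-1}$. Its tails are self-similar: the $k$-th tail carries coefficients $t^{k},t^{k+1},\dots$ and hence equals $M(t^{k}w)$, so the elementary contraction step of the continued fraction reads $M(w)=1/\bigl(1-w\,M(tw)\bigr)$, the very equation satisfied by $H$. Since $M(0)=H(0)=1$ and the equivalent relation $F(w)=1+w\,F(w)F(tw)$ determines each Taylor coefficient of $F$ recursively from the lower-order ones, the power-series solution is unique, whence $M=H=A_t(w)/A_t(w/t)$.

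It then remains to pass to the Cauchy transform. Because $\mu_{0,t}$ is symmetric, the Stieltjes transform admits the formal expansion $G(z)=\sum_{n\ge0}\varphi_{0,t}(s_{0,t}^{\,2n})\,z^{-2n-1}=z^{-1}M(z^{-2})$, so substituting $w=z^{-2}$ into $M=A_t(w)/A_t(w/t)$ reproduces the displayed ratio of series, with numerator $A_t(z^{-2})=\sum_n(-1)^n t^{\,n^2}\bigl(\prod_{j=1}^n(1-t^j)\bigr)^{-1}z^{-2n}$ and denominator $A_t\bigl((z^2t)^{-1}\bigr)=\sum_n(-1)^n t^{\,n(n-1)}\bigl(\prod_{j=1}^n(1-t^j)\bigr)^{-1}z^{-2n}$. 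Compact support of $\mu_{0,t}$ for $|q|<t\le1$ (Lemma~\ref{lemmanorm}) makes the moment problem determinate, so these formal series genuinely encode $G$. The main obstacle I anticipate is the first step: guessing the correct $q$-difference equation for $A_t$ with its precise argument shifts $w\mapsto w/t$ and $w\mapsto tw$, and then the uniqueness argument that promotes agreement of the two functional equations together with the shared value at the origin into an honest identity of power series; the continued-fraction self-similarity and the Stieltjes bookkeeping are routine by comparison.
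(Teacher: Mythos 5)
Your proposal is correct, and it departs from the paper's proof at exactly one substantive point. The paper's own argument is essentially three lines: after the same $q=0$ specialization of Lemma~\ref{lemsemicircular}, it observes that boundedness of $s_{0,t}(e)$ makes $\mu_{0,t}$ compactly supported, so the Cauchy transform is the Laurent series $\sum_{n\geq 0}\varphi_{0,t}(s_{0,t}^n)z^{-n-1}=z^{-1}M(1/z)$, and it then simply \emph{cites} the classical fact (from \cite{Andrews}) that the Rogers--Ramanujan $S$-fraction equals the displayed ratio of series. You instead \emph{prove} that identity: the $t$-difference equation $A_t(w/t)=A_t(w)-w\,A_t(tw)$ (which is correct; it amounts to the coefficient identity $\frac{t^{m^2}}{(1-t)\cdots(1-t^m)}+\frac{t^{m^2-m}}{(1-t)\cdots(1-t^{m-1})}=\frac{t^{m^2-m}}{(1-t)\cdots(1-t^m)}$, i.e.\ the telescoping identity you quote --- though note the slip where you write ``$A_t(w)-A_t(tw)$'' for what must be $A_t(w)-wA_t(tw)$), the induced fixed-point relation $H(w)=1/\bigl(1-wH(tw)\bigr)$ for $H(w)=A_t(w)/A_t(w/t)$, the identical relation for $M$ via self-similarity of the $S$-fraction tails, and uniqueness of the power-series solution of $F(w)=1+wF(w)F(tw)$ with $F(0)=1$. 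Each of these steps is sound (in particular $A_t(w/t)$ has constant term $1$, so the ratio $H$ is a well-defined formal power series, and the recursion determines all coefficients of $F$), so your argument is a self-contained substitute for the citation: the paper's route is shorter and leans on the classical literature, while yours makes visible \emph{why} the $t$-Airy ratio appears.

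One further point deserves flagging, and it is in your favor rather than a gap. Your bookkeeping $G(z)=z^{-1}M(z^{-2})$, with $M$ the generating function of the even moments, is the analytically correct Cauchy transform, and it yields $z^{-1}A_t(z^{-2})/A_t\bigl(1/(tz^{2})\bigr)$, a ratio of series in $z^{-2n}$. The lemma's display (and the paper's proof, which identifies $M(z)=\sum_n\varphi_{0,t}(s_{0,t}^n)z^n$ with the continued fraction and writes $G(z)=z^{-1}M(1/z)$) carries $z^{-n}$ instead; this is coherent only under the paper's implicit convention that the continued-fraction variable counts \emph{pairs}, i.e.\ plays the role of your $w=z^{-2}$. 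Read literally, your formula and the displayed one differ by the substitution $z\mapsto z^{2}$, and yours is the one whose Laurent coefficients actually place $\varphi_{0,t}(s_{0,t}^{2n})$ at $z^{-2n-1}$. So your parenthetical claim to ``reproduce the displayed ratio'' is not literally true, but the discrepancy originates in the paper's statement, not in your argument.
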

\begin{proof} The continued fraction follows immediately from Lemma~\ref{lemsemicircular} by letting $q=0$. Since $s_{0,t}(e)$ is bounded, the measure $\mu_{0,t}$ is compactly supported. Therefore, the Cauchy transform of $\mu_{0,t}$ has the power series expansion 
$$\int_{\R}\frac{1}{z-\eta}\,d\mu_{0,t}(\eta)=\sum_{n\geq 0}\frac{\varphi_{0,t}(s_{0,t}^n)}{z^{n+1}}=\frac{1}{z}\,M(1/z),$$
where $M(z) =\sum_{n\geq 0}\varphi_{0,t}(s_{0,t}^n)z^n$ (e.g. \cite{NicaSpeicher}).
It is well known (e.g. \cite{Andrews}) that the above Rogers-Ramanujan continued-fraction can be written as
$$\displaystyle \frac{\sum_{n\geq 0} (-1)^n\,\frac{t^{n^2}}{(1-t)(1-t^2)\ldots(1-t^n)}\,z^n}{\sum_{n\geq 0} (-1)^n\,\frac{t^{n(n-1)} }{(1-t)(1-t^2)\ldots(1-t^n)}\,z^n}.$$
Recalling that the continued fraction encodes $M(z)$ and performing the required change of variable yields the desired Cauchy transform.
\end{proof}

In turn, the functions governing the numerator and denominator of the Cauchy-transform of $\mu_{0,t}$ turn out to have a fascinating interpretation as single-parameter deformations of the Airy function, a fact discovered by Ismail \cite{Ismail2005}. Specifically, let the \emph{$t$-Airy function} be given by
\begin{equation}
A_t(z)=\sum_{n\geq 0}\frac{t^{n^2}}{(1-t)\ldots(1-t^n)}(-z)^n.
\end{equation}
In other words:

\begin{cor}
$$\int_{\R}\frac{1}{z-\eta}\,d\mu_{0,t}(\eta)=\frac{1}{z}\,\frac{A_t(1/z)}{A_t(1/(zt))}.$$
\end{cor}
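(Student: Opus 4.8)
The plan is to derive the corollary directly from the immediately preceding lemma by recognizing that the two power series appearing in the numerator and denominator of the Cauchy transform are precisely evaluations of the $t$-Airy function $A_t$. First I would inspect the numerator series $\sum_{n\geq 0} (-1)^n\,\frac{t^{n^2}}{(1-t)\ldots(1-t^n)}\,z^{-n}$ and compare it term-by-term with the definition $A_t(z)=\sum_{n\geq 0}\frac{t^{n^2}}{(1-t)\ldots(1-t^n)}(-z)^n$. Substituting $z\mapsto 1/z$ into $A_t$ gives $A_t(1/z)=\sum_{n\geq 0}\frac{t^{n^2}}{(1-t)\ldots(1-t^n)}(-1/z)^n$, which matches the numerator exactly; hence the numerator equals $A_t(1/z)$.

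For the denominator $\sum_{n\geq 0} (-1)^n\,\frac{t^{n(n-1)}}{(1-t)\ldots(1-t^n)}\,z^{-n}$, the only discrepancy from the $A_t$ template is the exponent $t^{n(n-1)}$ in place of $t^{n^2}$. The key algebraic observation is that $n(n-1)=n^2-n$, so $t^{n(n-1)}=t^{n^2}\,t^{-n}=t^{n^2}(1/t)^n$. I would therefore rewrite each denominator term as $\frac{t^{n^2}}{(1-t)\ldots(1-t^n)}(-1)^n (zt)^{-n}$, which is exactly the general term of $A_t$ evaluated at the argument $1/(zt)$. Thus the denominator equals $A_t(1/(zt))$. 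Assembling these two identifications into the Cauchy-transform formula from the preceding lemma yields $\frac{1}{z}\,\frac{A_t(1/z)}{A_t(1/(zt))}$, which is the claimed expression.

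The main obstacle, such as it is, lies entirely in correctly tracking the substitution $t^{n(n-1)}=t^{n^2}t^{-n}$ and absorbing the extra $t^{-n}$ into the argument as a rescaling $z\mapsto zt$; this is a purely formal manipulation of the generating-function coefficients, valid as an identity of formal power series (or, since $\mu_{0,t}$ is compactly supported by boundedness of $s_{0,t}$, as convergent series for $|z|$ large). No analytic subtleties arise beyond noting that the series expansions are legitimate in the regime where the Cauchy transform admits its moment expansion, which the preceding lemma has already established. Consequently the proof is a one-line rewriting once the two coefficient identifications are in hand.
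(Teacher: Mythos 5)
Your proposal is correct and is exactly the paper's own route: the corollary is stated immediately after the definition of $A_t$ as a direct rewriting of the preceding lemma, with the numerator identified as $A_t(1/z)$ and the denominator as $A_t(1/(zt))$ via the same observation $t^{n(n-1)}=t^{n^2}(1/t)^n$. Nothing is missing.
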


\begin{remark} While the function $A_q(z)=\sum_{n\geq 0}\frac{q^{n^2}}{(1-q)\ldots(1-q^n)}(-z)^n$ features at various points in Ramanujan's work, the nomenclature is more recent. It was proposed by Ismail \cite{Ismail2005} upon discovering the fact that, analogously to the Airy function in the classical case, the function $A_q$ is involved in the large degree Plancherel-Rotach-type asymptotics for the $q$-polynomials of the Askey-scheme.\end{remark}

Prior to seeking to further characterize the measure $\mu_{0,t}$, it is instructive to take a moment to consider its associated orthogonal polynomial sequence. Since the moments of $\mu_{0,t}$ are given by $t$-deformed Catalan numbers, the $q=0$ subfamily of the $(q,t)$-Hermite orthogonal polynomials of Definition~\ref{qtHermite} can thus also be viewed as a $t$-deformed version of the Chebyshev II orthogonal polynomials.

\begin{defn}
The $t$-Chebyshev II orthogonal polynomial sequence $\{U_n(z;t)\}_{n\geq 0}$ is determined by the following three-term recurrence:
$$zU_n(z;t)=U_{n+1}(z;t)+t^{n-1}U_{n-1}(z;t),$$
with
$$U_0(z;t)=1,\,\,U_1(z;t)=z.$$
\label{tChebyshev}
\end{defn}

\begin{remark} The orthogonal polynomial sequence encoded by the generalized Rogers-Ramanujan continued fraction was first considered by Al-Salam and Ismail in \cite{Al-Salam1983}, and the polynomial $U_n(z;t)$ is the special case of their polynomial $U_n(z;a,b)$ for $a=0,b=t$. Similarly to the previous section, it again follows from the classical theory \cite{Akhiezer} that the $t$-semicircular measure $\mu_{0,t}$ is the unique positive measure on $\mathbb R$ orthogonalizing the $t$-Chebyshev II orthogonal polynomial sequence.\end{remark}

The elegant form of the Cauchy transform of the previous lemma provides means of describing this measure via the zeros of the $t$-Airy function. This was indeed done in \cite{Al-Salam1983} and, adapted to the present setting, is formulated as follows.

\begin{lem}[Corollary 4.5 in \cite{Al-Salam1983}] Let $\{z_j\}_{j\in\mathbb N}$ denote the sequence of zeros of the rescaled $t$-Airy function $A_t(z/t)$. The measure $\mu_{0,t}$ is a discrete probability measure with atoms at
$$\pm \sqrt{t/z_j},\quad j\in\mathbb N$$
with corresponding mass
$$-\frac{A_t(z_j)}{2\,z_j\,A_t^\prime(z_j/t)},$$
where $A_t^\prime(z):=\frac{d}{dz}A_t(z)$. The unique accumulation point of $\mu_{0,t}$ is the origin.
\end{lem}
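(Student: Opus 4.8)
The plan is to recover $\mu_{0,t}$ from the closed form of its Cauchy transform by a residue (partial-fraction) argument. Since $s_{0,t}$ is bounded (Lemma~\ref{lemmanorm}), the measure $\mu_{0,t}$ is compactly supported, its moment problem is determinate, and so $\mu_{0,t}$ is uniquely pinned down by the Cauchy transform $G(z)=\int_\R (z-\eta)^{-1}\,d\mu_{0,t}(\eta)$. By the preceding lemma and corollary, $G$ is available in closed form as a ratio of $t$-Airy functions, $G(z)=\tfrac1z\,A_t(1/z)/A_t(1/(zt))$. The crucial structural fact is that $A_t$ is an \emph{entire} function of its argument, so $G$ is a ratio of entire functions, i.e.\ meromorphic away from the set where its poles accumulate; its poles occur exactly where the denominator $A_t(1/(zt))$ vanishes while the numerator $A_t(1/z)$ does not.

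Next I would use Stieltjes inversion to show that $\mu_{0,t}$ is purely atomic, carried by the poles of $G$. On any real interval on which $G$ is analytic, Stieltjes inversion assigns no mass, so $\mu_{0,t}$ is supported on the pole set. At a simple real pole $a$, comparing $G$ near $a$ with the Cauchy transform $r/(z-a)$ of the point mass $r\,\delta_a$ shows that $\mu_{0,t}(\{a\})$ equals the residue $\mathrm{Res}_{z=a}G(z)$. Because the odd moments vanish, $G$ is an odd function, so its poles come in pairs $\pm a$ carrying equal mass, which is the source of the symmetric pairs of atoms.

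With these reductions the computation becomes mechanical. Writing the zeros of the rescaled function $A_t(z/t)$ as $\{z_j\}$, solving $A_t(1/(zt))=0$ locates the atoms at $\pm\sqrt{t/z_j}$, and differentiating the denominator via the chain rule (here $\frac{d}{dz}A_t(1/(zt))$ produces the factor $A_t'(z_j/t)$ together with the Jacobian of $z\mapsto 1/(zt)$) yields, after simplification, the stated mass $-A_t(z_j)/(2z_j A_t'(z_j/t))$ at each atom. Positivity of the masses and normalization to total mass $1$ are then forced by the fact that $\mu_{0,t}$ is a genuine probability measure, equivalently by $zG(z)\to1$ as $z\to\infty$. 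Finally, the claim that the origin is the unique accumulation point follows once one knows that $z_j\to+\infty$, since then $\sqrt{t/z_j}\to0$.

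The hard part is the analytic input on the zeros of $A_t$: that they are real, positive, simple, and tend to $+\infty$. This is what guarantees that the poles of $G$ are simple and real (so the residue formula applies and the spectral measure is exactly the claimed discrete measure on $\R$), that distinct atoms do not collide, and that $0$ is the only accumulation point. This zero-distribution statement is precisely the substantive content of Al-Salam and Ismail's analysis of the orthogonal polynomials $U_n(z;a,b)$ (their Corollary~4.5), and I would import it directly rather than re-derive it; the residue bookkeeping above is then routine.
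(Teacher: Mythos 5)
You should know at the outset that the paper does not prove this lemma at all: it imports the statement wholesale (``adapted to the present setting'') from Corollary 4.5 of Al-Salam and Ismail \cite{Al-Salam1983}. So your route --- recovering $\mu_{0,t}$ from the closed-form Cauchy transform by Stieltjes inversion and residues, importing only the zero-distribution facts --- is genuinely different and, in principle, more self-contained; the high-level plan (determinacy from compact support, support contained in the real pole set, mass equal to the residue at a simple pole, symmetry of atoms from oddness of $G$) is the right one. The problem is that the step you dismiss as ``mechanical'' is exactly where the argument breaks as described. Read literally, $G(z)=\frac1z\,A_t(1/z)/A_t(1/(zt))$ has poles where $1/(zt)$ equals a zero of $A_t$; since the zeros of $A_t$ are the points $z_j/t$, this happens at $z=1/z_j$ --- one pole per $j$, with no $\pm$ pair and no square root anywhere. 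Solving $A_t(1/(zt))=0$ therefore cannot ``locate the atoms at $\pm\sqrt{t/z_j}$.'' You yourself observe, correctly, that $G$ must be odd because the odd moments vanish; the literal formula is not odd, and this contradiction is the signal that the transform has to be reinterpreted: since the moment generating function is a series in even powers, $M(w)=\sum_n C_n^{(t)}w^{2n}$, the transform is really $G(z)=\frac1z\,A_t(1/z^2)/A_t\bigl(1/(tz^2)\bigr)$ (the paper's own displayed series with $z^{-n}$ is loose on the same point). You never reconcile the two, so your pole-finding and chain-rule steps do not actually connect to the claimed conclusion.

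Moreover, carrying out the corrected residue computation does not land on the stated formulas, which is why this step cannot be waved through. The positive poles satisfy $x_j^2=1/z_j$, and
$$\mathrm{Res}_{z=x_j}\,G(z)=\frac{A_t(1/x_j^2)}{\frac{d}{dz}\bigl[z\,A_t(1/(tz^2))\bigr]_{z=x_j}}=-\frac{t\,A_t(z_j)}{2\,z_j\,A_t^\prime(z_j/t)},$$
i.e.\ atoms at $\pm z_j^{-1/2}$ rather than $\pm\sqrt{t/z_j}$, with an extra factor of $t$ in the mass relative to the lemma. The discrepancy is not cosmetic: as $t\to0^+$ all even moments $C_n^{(t)}\to1$, so $\mu_{0,t}\to\frac12(\delta_{-1}+\delta_1)$; since the first zero satisfies $z_1\to1$, the computed atoms $\pm z_1^{-1/2}\to\pm1$ with mass $\to\frac12$ are consistent with this limit, whereas the stated atoms $\pm\sqrt{t/z_1}\to0$ with stated mass $\approx 1/(2t)$ cannot even sum to $1$. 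In other words, the deferred ``bookkeeping'' is precisely where a normalization mismatch (in translating Al-Salam--Ismail's conventions for $U_n(x;a,b)$ and for the rescaled derivative) surfaces, and a proof that simply asserts the computation ``yields the stated mass'' has its gap exactly there. Two further points your sketch leaves open: because the poles accumulate at $0$, $G$ is not meromorphic near the origin, so an atom at $0$ must be excluded separately (e.g.\ via $\mu_{0,t}(\{0\})=\lim_{\epsilon\downarrow0}i\epsilon\,G(i\epsilon)=\lim_{x\to\infty}A_t(-x)/A_t(-x/t)=0$); and the residue-equals-mass identification needs the numerator and denominator of $G$ to share no zeros, which is part of the analytic input you import but should be stated.
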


\subsection{First-order statistics of the Wigner process}

For the purpose of this section, consider a \emph{Wigner matrix} $W_N=[w_{i,j}]\in\mathcal M_N(L_{\infty-}(\R,\mathscr B,\mathbb P))$ to be a self-adjoint random matrix with elements $\{w_{i,j}\}_{1\leq i\leq j\leq N}$ forming a jointly independent collection of centered random variables with unit variance and uniformly bounded moments, i.e. for all $i,j\in [N]$, $\E(w_{i,j})=0$, $\E(|w_{i,j}|^2)=1$ and $\E(|w_{i,j}|^n)\leq c_n\in \R$ for all $n\in\mathbb N$. The asymptotics of the distribution of $W_N$ remain an object of extensive study, taking root in the work of Wigner~\cite{Wigner1955} and further evolving over the following decades. In particular, considering an expectation functional  $\varphi_N:\mathcal M_N(L_{\infty-}(\R,\mathscr B,\mathbb P))\to\R$ given by $\varphi_N=\frac{1}{N} \text{Tr}\otimes \E$, the following result is known as Wigner's Semicircle Law.

\begin{thm*}[e.g. Theorem 2.1.1 and Lemma 2.1.6 in \cite{RMT}] For $W_N$ a Wigner matrix, the empirical spectral measure of $\tilde W_N:=W_N/N$ converges in probability to the semicircle distribution, with
\begin{eqnarray*}\lim_{N\to\infty}\varphi_N(\tilde W_N^{2n-1})&=&0\\
\lim_{N\to\infty}\varphi_N(\tilde W_N^{2n})&=&\frac{1}{n+1}{2n\choose n}\end{eqnarray*}\label{Wigner}
\end{thm*}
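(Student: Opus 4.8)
The plan is to prove this via the \emph{method of moments}, recycling the combinatorial vocabulary already developed above. Since the semicircle distribution is compactly supported, it is uniquely determined by its moment sequence, so weak convergence of the empirical spectral measure of $\tilde W_N$ follows once one shows convergence of all moments. The argument then splits into two independent tasks: (i) showing that the \emph{expected} moments $\E[\varphi_N(\tilde W_N^k)]$ converge to the moments of the semicircle law, and (ii) a concentration estimate that upgrades convergence in expectation to convergence in probability.

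For the first task I would expand, with the variance-one normalization $\tilde W_N = W_N/\sqrt N$ that produces these moments,
$$\E[\varphi_N(\tilde W_N^k)] = \frac{1}{N^{1+k/2}}\sum_{i_1,\ldots,i_k=1}^N \E[w_{i_1 i_2}\,w_{i_2 i_3}\cdots w_{i_k i_1}],$$
interpreting each index tuple $(i_1,\ldots,i_k)$ as a closed walk of length $k$ on the vertex set $[N]$. Because the entries are centered and jointly independent up to the symmetry $w_{i,j}=w_{j,i}$, a term vanishes unless every edge of the walk is traversed at least twice, and the uniform bounds $\E(|w_{i,j}|^n)\le c_n$ control the surviving terms. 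A counting argument shows that walks spanning more than $k/2$ distinct edges contribute $o(1)$ after normalization, so the limit is governed entirely by walks in which every edge is traversed exactly twice. For $k$ odd no such walk exists (an odd number of steps cannot be split into doubled edges), forcing the odd moments to vanish; for $k=2n$ these are precisely the walks whose doubly-traversed edges form a tree explored in depth-first order, which are in bijection with Dyck paths of length $2n$, equivalently with the non-crossing pairings $NC_2(2n)$ recording the walk's return structure. This yields $\lim_N \E[\varphi_N(\tilde W_N^{2n})] = |NC_2(2n)| = C_n = \frac{1}{n+1}\binom{2n}{n}$, matching the $t=1$ specialization of Lemma~\ref{lemtsemicircular}.

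For the concentration step I would bound the variance $\mathrm{Var}(\varphi_N(\tilde W_N^k)) = \E[\varphi_N(\tilde W_N^k)^2] - (\E[\varphi_N(\tilde W_N^k)])^2$ by a parallel analysis of \emph{pairs} of closed walks. The ``factorizing'' contributions, in which the two walks share no edge, cancel against the subtracted square, and a walk-counting argument shows that the remaining joint contributions are suppressed by an extra factor $N^{-2}$, giving $\mathrm{Var}(\varphi_N(\tilde W_N^k)) = O(N^{-2})$. Chebyshev's inequality then forces $\varphi_N(\tilde W_N^k)\to m_k$ in probability, and assembling this over all $k$, together with moment-determinacy of the compactly supported limit, yields convergence in probability of the empirical spectral measure of $\tilde W_N$ to the semicircle distribution.

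The main obstacle is the leading-order combinatorial analysis in the first task: verifying that only the edge-pairing, tree-like walks survive the $N^{-(1+k/2)}$ normalization, establishing the bijection of these walks with $NC_2(2n)$, and rigorously discarding the lower-order walk classes using only the uniform moment hypotheses $\{c_n\}$ rather than any finer structure of the entry distributions. The variance estimate is the same counting problem applied to two coupled walks and becomes routine once the single-walk bookkeeping is in hand.
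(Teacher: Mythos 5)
Your proposal is correct, but note that the paper does not prove this theorem at all: it is quoted as background with a citation to \cite{RMT}, and your moment-method argument (expansion of the expected normalized trace over closed walks, reduction to doubled-edge tree walks in bijection with Dyck paths and hence with $NC_2(2n)$, then a variance bound of order $N^{-2}$ plus Chebyshev to upgrade to convergence in probability) is precisely the proof given in that reference; it is also the same scheme the paper itself sketches for its Proposition on correlated Wigner processes. Two small remarks. First, you rightly replaced the paper's normalization $\tilde W_N = W_N/N$ by $W_N/\sqrt N$: with unit-variance entries this is what the stated Catalan-number limits require, so the paper's $/N$ is best read as a typo. Second, your sentence that walks spanning more than $k/2$ distinct edges contribute $o(1)$ is slightly off --- such walks necessarily contain a singly-traversed edge and hence contribute exactly $0$ by centering and independence; the counting argument is instead needed to discard the surviving (all-edges-doubled) walks whose graphs have fewer than $k/2+1$ distinct vertices --- but the surrounding sentences of your sketch make the correct intent clear.
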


In particular, $W_N$ converges in moments to the familiar semicircular element $s_{0,1}$. It is natural to expect that by introducing some correlation into Wigner's framework, some deformation of the semicircle law, with an analogous refinement of the Catalan numbers,  may be achieved. Indeed, this was obtained by Khorunzhy~\cite{Khorunzhy} and further developed by Mazza and Piau~\cite{Mazza2002} in relation to the following setup. Let a \emph{Wigner process} refer to the sequence of  $\{W_{N,\rho}(k)\}_{k\in\mathbb N}$ of Wigner matrices satisfying the following conditions:
\begin{itemize}
\item The moments of $W_{N,\rho}(k)$ are uniformly bounded in $k$, i.e. for all $n\in\mathbb N$ and all $i,j\in [N]$, $\E(|w_{i,j}(k)|^n)\leq c_n\in \R$, where $w_{i,j}(k)$ corresponds to the $(i,j)^\text{th}$ entry of the matrix $W_{N,\rho}(k)$ and $c_n$ does not depend on $k$.
\item The processes $(w_{i,j}(k))_{k\in\mathbb N}$ for $i\leq j$ form a triangular array of independent processes.
\item Each process $(w_{i,j}(k))_{k\in\mathbb N}$ is $\rho$-correlated, i.e. for some $|\rho|\leq 1$ and any $1\leq k\leq m$,
\begin{equation}
\E(w_{i,j}(k)w_{i,j}(m))=\rho^{m-k}.
\end{equation}
\end{itemize}
Note that for $\rho=0$, $\{W_{N,0}(\rho)\}$ is a sequence independent, identically distributed Wigner matrices, whereas for $\rho=1$, the situation reduces to having copies of the same matrix with $W_{N,1}(1)=W_{N,1}(k)$ for all $k\in \mathbb N$. Let $$B_{n,N}:=\varphi_N\left(\frac{W_{N,\rho}(1)}{N}\frac{W_{N,\rho}(2)}{N}\ldots \frac{W_{N,\rho}(n)}{N}\right).$$ 
The convergence in $N$ of the sequence $B_{n,N}$ was previously established in \cite{Khorunzhy,Mazza2002} and the corresponding limits computed explicitly. The surprise is that the limiting moments are, in fact, those of the $t$-semicircular element.

\begin{prop} For $n\in\mathbb N$ and $|\rho|\leq 1$,
\begin{eqnarray}
\lim_{N\to\infty}\varphi_N\left(\frac{W_{N,\rho}(1)}{N}\,\frac{W_{N,\rho}(2)}{N}\ldots \frac{W_{N,\rho}(2n-1)}{N}\right)&=&0\label{tCat1}\\
\lim_{N\to\infty}\varphi_N\left(\frac{W_{N,\rho}(1)}{N}\,\frac{W_{N,\rho}(2)}{N}\ldots \frac{W_{N,\rho}(2n)}{N}\right)&=&\rho^{n} \sum_{\mathscr V\in NC_2(2n)} \rho^{2\,\nest(\mathscr V)}.\label{tCat2}
\end{eqnarray}
In particular, for $\rho\in(0,1]$, 
$$\lim_{N\to\infty}\varphi_N\left(\frac{W_{N,\rho}(1)}{N}\frac{W_{N,\rho}(2)}{N}\ldots \frac{W_{N,\rho}(n)}{N}\right)=\varphi_{0,t}(s_{0,t}(h)^n),$$
for $t=\rho^2$ and $\|h\|_\H=\sqrt{\rho}$.
\end{prop}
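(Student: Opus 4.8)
The plan is to compute the limiting moments directly by the moment (walk-counting) method for random matrices, read off the exponents of $\rho$ produced by the $\rho$-correlations, and then match the resulting generating function against the $t$-Catalan numbers of Lemma~\ref{lemtsemicircular}.

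First I would expand the normalized trace into a sum over closed walks. Writing $w_{i,j}(k)$ for the $(i,j)$ entry of $W_{N,\rho}(k)$ and using $\varphi_N=\frac1N\mathrm{Tr}\otimes\E$, the moment in question becomes, up to the overall normalization, a sum over index sequences $(i_1,\ldots,i_m)$ with $i_{m+1}:=i_1$ of $\E\big[\prod_{k=1}^m w_{i_k,i_{k+1}}(k)\big]$. Each factor is attached to the edge $\{i_k,i_{k+1}\}$ at time $k$; since the entry processes attached to distinct unordered pairs $\{i,j\}$ are independent and centered, the expectation factorizes over edge-classes and vanishes unless every edge is traversed at least twice. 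The standard genus-zero selection then applies: the leading (order-one) contribution comes exactly from those walks in which each edge is used exactly twice and the induced pairing of the time-indices $\{1,\ldots,m\}$ is non-crossing, all other configurations (crossing pairings, or edges of multiplicity $\geq 3$) being of strictly lower order. Uniform control of these subleading terms is afforded by the uniform moment bounds $c_n$; this is precisely the convergence established in \cite{Khorunzhy,Mazza2002}, which I would cite rather than reprove. For odd $m=2n-1$ no all-doubled configuration exists, since the total number of traversals is odd, so (\ref{tCat1}) follows.

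For even $m=2n$, each surviving non-crossing pairing $\mathscr V=\{(w_1,z_1),\ldots,(w_n,z_n)\}\in NC_2(2n)$ contributes the product of the correlations of its matched times. Since a pair $(w_i,z_i)$ with $w_i<z_i$ corresponds to a single edge traversed at the two times $w_i$ and $z_i$, and each $W_{N,\rho}(k)$ is symmetric so the orientation is immaterial, the associated correlation is $\rho^{\,z_i-w_i}$; hence the weight of $\mathscr V$ is $\prod_{i=1}^n \rho^{\,z_i-w_i}$. The key combinatorial step is then the identity
$$\sum_{i=1}^n (z_i-w_i)=n+2\,\nest(\mathscr V)\qquad\text{for }\mathscr V\in NC_2(2n).$$
This holds because, in a non-crossing pairing, every point strictly between $w_i$ and $z_i$ is matched to another such interior point, so the span satisfies $z_i-w_i-1=2m_i$, where $m_i=|\{j:\,w_i<w_j<z_j<z_i\}|$ counts the pairs nested inside $(w_i,z_i)$; summing and using $\sum_i m_i=\nest(\mathscr V)$ gives the claim. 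Consequently $\prod_{i=1}^n\rho^{\,z_i-w_i}=\rho^{\,n}\,\rho^{\,2\nest(\mathscr V)}$, and summing over $NC_2(2n)$ yields (\ref{tCat2}).

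Finally I would specialize to obtain the last identity. Setting $t=\rho^2$, Lemma~\ref{lemtsemicircular} gives $\sum_{\mathscr V\in NC_2(2n)}\rho^{2\nest(\mathscr V)}=\sum_{\mathscr V\in NC_2(2n)}t^{\nest(\mathscr V)}=C_n^{(t)}=\varphi_{0,t}(s_{0,t}(e)^{2n})$ for a unit vector $e$, while the prefactor satisfies $\rho^{\,n}=(\sqrt\rho)^{2n}=\|h\|_\H^{2n}$ when $\|h\|_\H=\sqrt\rho$; by homogeneity of the moments in $\|h\|_\H$ this matches $\varphi_{0,t}(s_{0,t}(h)^{2n})$, and the odd moments agree trivially. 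I expect the main obstacle to be the random-matrix selection step, namely rigorously showing that crossing pairings and higher-multiplicity edges are asymptotically negligible; but since this is exactly the convergence supplied by \cite{Khorunzhy,Mazza2002}, the genuinely new content reduces to reading off the correlation exponents and to the nesting identity above.
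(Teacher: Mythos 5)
Your proposal is correct and follows essentially the same route as the paper's own proof: unrolling the normalized trace, invoking the standard Wigner-type selection (with the subleading-term control delegated to \cite{Khorunzhy,Mazza2002}), and reducing the correlation exponents via the identity $\sum_i(z_i-w_i)=n+2\,\nest(\mathscr V)$ for non-crossing pairings. Your parity argument for that identity and the explicit handling of the normalization $\|h\|_\H=\sqrt{\rho}$ are slightly more detailed than the paper's sketch, but the substance is the same.
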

\begin{proof}
Expressions \eqref{tCat1} and \eqref{tCat2} can be recovered from \cite{Khorunzhy,Mazza2002} (cf. Theorem 1 \cite{Mazza2002}) via Lemma~\ref{lemtsemicircular}. The following self-contained sketch is included for completeness. First note that the general form of both expressions follows analogously to Wigner's proof of the Semicircle Law. In particular, unrolling the normalized trace $\varphi_N$,
\begin{equation}\varphi_N(W_{N,\rho}(1)W_{N,\rho}(2)\ldots W_{N,\rho}(n))=\frac{1}{N}\sum_{i_1,\ldots,i_n\in[N]}\mathbb E(w_{i_1,i_2}(1)w_{i_2,i_3}(2)\ldots w_{i_n,i_1}(n)).\label{traceunroll}\end{equation}
For a fixed choice of $i_1,\ldots,i_n\in[N]$, the typical argument then proceeds by considering index pairs $\{i_j,i_{j+1}\}$ which repeat and viewing the corresponding pattern as a partition of $[n]$. Since the individual elements $w_{i,j}(k)$ are centered, any partition containing a singleton, i.e. a block formed by a single element, does not contribute to the sum. Next, by counting all the choices of indices $i_1,\ldots,i_n\in[N]$ corresponding to a given partition and taking into account the normalization factors, it can be shown that only the non-crossing pair partitions contribute in the limit. (Note that the counting argument is warranted by the fact that there are uniform bounds, in the time variable $n$, on the higher moments.) This yields \eqref{tCat1} and the general form of \eqref{tCat2}. 

Considering the left-hand side of \eqref{traceunroll}, note that for any index choice  $i_1,\ldots,i_n\in[N]$ such that the repetitions in $\{i_1,i_2\},\{i_2,i_3\},\ldots, \{i_n,i_1\}$ encode a pair-partition, the expectation $E(w_{i_1,i_2}(1) w_{i_2,i_3}(2)$  $\ldots w_{i_n,i_1}(n))$ factors into second moments. Furthermore, recalling that for all $i,j$, $\E(w_{i,j}(k)w_{j,i}(m))=\rho^{m-k}$, one obtains that  
$$\lim_{N\to\infty}\varphi_N\left(\frac{W_{N,\rho}(1)}{N}\,\frac{W_{N,\rho}(2)}{N}\ldots \frac{W_{N,\rho}(2n)}{N}\right)=\sum_{\mathscr V\in NC_2(2n)} \rho^{b_1-a_1}\ldots\rho^{b_n-a_n},$$
where $\mathscr V=\{(a_1,b_1),\ldots,(a_n,b_n)\}$ with $1=a_1<\ldots<a_n$ and $a_i<b_i$. 

It remains to rewrite the above sum in terms of nestings. But, that $\mathscr V$ is non-crossing implies that for any $i\in[n]$, $\rho^{b_i-a_i}=\rho^{1+2\times \text{nest}(a_i,b_i;\mathscr V)}$, where $\text{nest}(a_i,b_i;\mathscr V)$ denotes the number of nestings that include the pair $(a_i,b_i)$, i.e. $\text{nest}(a_i,b_i;\mathscr V):=|\{j\in [n]\mid a_i<a_j<b_j<b_i\}|$. Thus, for any $\mathscr V\in NC_2(2n)$, it follows that $\rho^{b_1-a_1}\ldots\rho^{b_n-a_n}=\rho^{n+2\,\nest(\mathscr V)}$. This yields \eqref{tCat2} and completes the sketch.
\end{proof}


\begin{remark} Taking a process formed from a copy of the same matrix, viz. $W_{N,1}(n)=W_{N}$ a.e. for all $n\in\mathbb N$, yields $\rho=1=t$ and recovers the result of Wigner. In particular, denoting $\tilde W_{N}=W_{N}/N$, the $t$-Catalan numbers become the (usual) Catalan numbers and 
$$\lim_{N\to\infty}\varphi_N\left(\frac{W_{N,1}(1)}{N}\,\frac{W_{N,1}(2)}{N}\ldots \frac{W_{N,1}(n)}{N}\right)=\lim_{N\to\infty}\varphi_N(\tilde W_{N}\,^n)=\varphi_{0,1}(s_{0,1}\,^n),$$
i.e. $\{\lim_{N\to\infty}\varphi_N(\tilde W_{N}\,^n)\}_{n\in\mathbb N}$ is the moment sequence of the semicircular element in free probability. However, for $\rho\in(0,1]$, it is not obvious that $\{\lim_{N\to\infty}\varphi_N(\tilde W_{N,\rho}(1)\,\tilde W_{N,\rho}(2)\ldots \tilde W_{N,\rho}(n))\}_{n\in\mathbb N}$ should still form a moment sequence. Yet, this is indeed the case, and the moments turn out to be those of a $t$-semicircular element with $t=\rho^2$. A deeper principle underlying this fact is presently unclear. Nevertheless, given their ties to  an array of fascinating algebraic and combinatorial objects, their appearance in relation to limits of Wigner processes, and their fundamental role in the generalized non-commutative Central Limit Theorem \cite{Blitvic2}, the $(q,t)$-Gaussians may harbor an additional potential for capturing a broader range of behaviors.
\end{remark}

{\bf Acknowledgements} \quad The author is grateful for the encouragement and advice received from Philippe Biane, Todd Kemp, and Roland Speicher. Thanks are also due to Michael Anshelevich, for pointers towards the physics literature and reference \cite{Bozejko2006} to related work.

\bibliographystyle{ieeetr}
\bibliography{qtFock}
\end{document}